\documentclass[hidelinks,onefignum,onetabnum]{siamart251216}

\usepackage{lipsum}
\usepackage{amsfonts}
\usepackage{graphicx}
\usepackage{epstopdf}
\usepackage{algorithmic}
\ifpdf
  \DeclareGraphicsExtensions{.eps,.pdf,.png,.jpg}
\else
  \DeclareGraphicsExtensions{.eps}
\fi

\newsiamremark{remark}{Remark}
\newsiamremark{hypothesis}{Hypothesis}
\crefname{hypothesis}{Hypothesis}{Hypotheses}
\newsiamthm{claim}{Claim}
\newsiamremark{fact}{Fact}
\crefname{fact}{Fact}{Facts}

\headers{Damped SWIFT}{Trevisani, L\'opez-Salas, Ben Hammouda and Oosterlee}

\title{A Damped SWIFT Method for European Option Pricing: Coefficients Decay, Truncation, and Error Analysis\thanks{Submitted to the editors DATE.
\funding{This work was funded 
by the grant ED431G 2023/01 of CITIC.}}}

\author{Davide Trevisani\thanks{Department of Mathematics and CITIC, University of A Coruña, Campus de Elviña s/n, A Coruña, 15071, Galicia, Spain
  (\email{davide.trevisani@udc.es}, \email{jose.lsalas@udc.es}).} \and Jos\'e Germ\'an L\'opez Salas\footnotemark[2] \and Chiheb Ben Hammouda\thanks{Mathematical Institute, Utrecht University, Utrecht, the Netherlands (\email{c.benhammouda@uu.nl}, \email{c.w.oosterlee@uu.nl}).}
\and  Cornelis W. Oosterlee\footnotemark[3]}

\usepackage{amsopn}

\ifpdf
\hypersetup{
  pdftitle={A Damped SWIFT Method for European Option Pricing: Coefficients Decay, Truncation, and Error Analysis},
  pdfauthor={D. Trevisani, J. G. L\'opez-Salas, C. Ben Hammouda and C. W. Oosterlee}
}
\fi

\usepackage{amssymb}
\usepackage{tikz}
\usetikzlibrary{arrows.meta,decorations.pathmorphing}
\usetikzlibrary{decorations.markings}

\usepackage{multicol}
\usepackage{multirow}
\usepackage{enumitem}
\usepackage{mathdots}

\newcommand{\R}{\mathbb{R}}
\newcommand{\Q}{\mathbb{Q}}
\newcommand{\Z}{\mathbb{Z}}
\newcommand{\E}{\mathbb{E}}
\newcommand{\ii}{\mathrm{i}}
\DeclareMathOperator*{\Res}{Res}
\newcommand{\C}{\mathbb{C}}
\newcommand{\dd}{\,\mathrm{d}}

\newtheorem{assumption}[theorem]{Assumption}

\allowdisplaybreaks

\begin{document}

\maketitle

% REQUIRED
\begin{abstract}
We introduce a damped variant of the Shannon Wavelet Inverse Fourier Technique (SWIFT) for pricing European options when the characteristic function of the underlying model is available. The key idea is to apply an exponential damping transformation to the payoff, which enables the direct computation of Fourier coefficients in the frequency domain without introducing an additional physical-domain truncation parameter. We provide a rigorous analysis of the decay of these coefficients by exploiting the singularity structure of the associated Fourier transforms. For light-tailed models, we obtain Gaussian-type decay estimates, while for semi-heavy and heavy-tailed models whose singularities are poles,  algebraic branch points, or logarithmic branch points, we derive exponential decay bounds with explicit polynomial prefactors. The resulting sharp bounds make it possible to truncate the Fourier series without relying on the cumulants of the underlying density, which are often unavailable or difficult to compute in practice. We further derive an error decomposition separating projection, truncation, and quadrature errors, and translate the analysis into practical rules for selecting the damping parameter, resolution level, and truncation range. Numerical experiments demonstrate that the proposed approach consistently improves the accuracy of the original SWIFT method while requiring a significantly smaller number of Fourier coefficients and remaining stable in cases where the undamped method deteriorates. 
\end{abstract}

% REQUIRED
\begin{keywords}
Fourier methods, option pricing, error analysis
\end{keywords}

% REQUIRED
\begin{MSCcodes}
68Q25, 60E10, 65T60, 91G20
% Analysis of algorithms and problem complexity
%   Characteristic functions; other transforms
%   Numerical methods for wavelets
%	Derivative securities (option pricing, hedging, etc.)
\end{MSCcodes}

\section{Introduction}

Fourier-based methods form a central class of techniques for the numerical valuation of financial derivatives, particularly when the characteristic function of the underlying stochastic process is available in closed form. Within this class, the Shannon Wavelet Inverse Fourier Technique (SWIFT), introduced in \cite{OrtizOosterlee2016}, provides a flexible framework based on wavelet approximations of the density. Related Fourier-based approaches include the Carr--Madan FFT method \cite{CarrMadan1999}, the Fourier-cosine (COS) method \cite{FangOosterlee2008}, and generalized Fourier-transform methods \cite{BayerBenHammoudaPapapantoleonSametTempone2022,bayer2024quasi,EberleinGlauPapapantoleon2010,lewis2001simple}.

The SWIFT method represents the density by Shannon scaling functions, i.e., dilations and translations of the sinc function, forming an orthonormal basis of $L^2(\R)$. This construction combines several features that make it attractive for option pricing applications. The Fourier transforms of the Shannon scaling functions have compact support, leading to structured coefficient representations and efficient numerical implementations. Moreover, the localized nature of the Shannon wavelet basis allows the approximation to be expressed in terms of translation coefficients whose magnitude reflects the contribution of different regions of the density.  This feature is particularly attractive when the density is asymmetric, heavy-tailed, or sharply peaked, as may occur for short maturities or under Lévy-type dynamics, where the choice of an appropriate computational domain and truncation strategy becomes more delicate. Unlike global expansions, where the approximation is determined by coefficients that depend on the entire domain, the wavelet representation provides a natural mechanism for identifying the regions that contribute most significantly to the approximation through the decay of the corresponding coefficients. The method has been successfully applied to a variety of option pricing problems,
%, including European, Bermudan, and discretely monitored barrier options; 
see \cite{OrtizOosterlee2016,MareeOrtizOosterlee2017}. 
%In particular, the locality of the basis has proven advantageous in recursive valuation procedures arising in early-exercise and path-dependent contracts, where repeated projections may amplify boundary effects. 
Exponential convergence with respect to (wrt) the wavelet scale has been established under suitable regularity assumptions.

Despite these attractive properties, several aspects of the SWIFT methodology remain insufficiently understood. In practical implementations, numerical parameters such as the wavelet scale, the number of translation terms, and the effective computational domain are often selected using heuristic considerations. While these choices have proven successful in many applications, a systematic framework for truncation, parameter selection, and error control is still largely absent. As discussed in \cite{LeFloch2024}, the performance of the method can be sensitive to these choices, particularly for densities exhibiting strong asymmetry, heavy tails, or pronounced concentration effects. Similar challenges have recently been investigated in detail for the COS method; see \cite{Junike2024,JunikePankrashkin2022,JunikeStier2025}.  These works demonstrate that parameter selection can play a decisive role in the overall accuracy and efficiency of spectral approximation methods and motivate a corresponding investigation for the SWIFT framework. In particular, our goal is to understand how truncation and parameter selection can be related to the decay properties of the underlying wavelet coefficients.

To address this question, we introduce a damped formulation of SWIFT and analyze the resulting density and payoff coefficients directly in Fourier space.  The damping transformation facilitates a detailed study of their asymptotic decay, allowing truncation and parameter selection to be related to the singularity structure of the characteristic function.  Moreover, the damping transformation yields a fully Fourier-space representation of the density and payoff coefficients.  In contrast to recent COS analyses, which derive truncation rules from real-variable estimates on tail decay and regularity, our approach is based on the complex singularity structure of the characteristic function. Poles and algebraic/logarithmic branch points nearest the real axis determine the asymptotic decay of the SWIFT coefficients: the distance of the dominant singularity from the real axis governs the exponential decay rate, while its local structure determines the polynomial prefactor.

Building on this singularity analysis, we derive explicit decay estimates for the product of the SWIFT coefficients, %and provide a parameter-selection strategy inferred directly from their decay wrt the translation parameter. 
and propose coefficient-based truncation rules, in contrast to existing numerical implementations that rely on physical-space truncations, such as cumulant-based approaches.
%Existing implementations typically rely on cumulant-based interval selection. 
%Our approach yields coefficient-based truncation rules that do not rely on cumulant heuristics. 
%Within the proposed framework, the truncation range is determined by the decay of the wavelet coefficients. 
This also distinguishes the present framework from recent damped COS approaches, where truncation is formulated through global cosine-series approximations on a prescribed interval, without exploiting the decay of the payoff coefficients.
Our error analysis 
%provides additional insight into the approximation error and 
offers practical guidance for choosing numerical parameters in a mathematically justified manner. 
%This also distinguishes the present framework from recent damped COS approaches, where truncation is formulated through global cosine-series approximations on a prescribed interval. %In contrast, the wavelet framework derives truncation directly from the decay of the damped Shannon-wavelet coefficients.

%Recent developments in higher-dimensional Fourier pricing have highlighted the importance of the regularity structure and rigorous control of truncation and approximation errors \cite{BayerBenHammoudaPapapantoleonSametTempone2022,bayer2024quasi}. In such settings, computational efficiency is often highly sensitive to the choice of truncation and approximation parameters. Moreover, formulations that allow the relevant quantities to be computed directly in Fourier space are particularly attractive, as they expose the analytic structure of the integrands and can lead to more effective quadrature-based methods. 
Before extending the damped SWIFT to multidimensional problems, it is therefore important to understand the mechanisms governing coefficient decay, truncation, and approximation accuracy already in one dimension. The framework developed here is intended to provide such a foundation and to support the development of parameter-selection strategies for more general wavelet-based Fourier methods.

The main contributions of the paper are as follows. First, we introduce a damped SWIFT formulation for European option pricing in models where the characteristic function of the log-price process is available. The proposed formulation enables the density and payoff coefficients to be computed directly in Fourier space without introducing an additional physical-domain truncation parameter. Second, we derive coefficient-decay estimates for the damped density and payoff coefficients. 
%For light-tailed models we obtain Gaussian-type decay estimates, while for models whose characteristic functions possess algebraic singularities we derive exponential decay bounds with explicit polynomial prefactors. 
Third, we use these decay estimates to derive explicit truncation bounds and coefficient-based truncation rules that do not rely on cumulants of the underlying density. Fourth, we derive an error decomposition separating projection, truncation, and quadrature errors. Finally, we translate the analysis into practical guidelines for selecting the damping parameter, the resolution level, and the truncation range.  The numerical experiments validate the theoretical findings and illustrate how the proposed coefficient-decay analysis leads to effective truncation strategies and improved computational efficiency.

The remainder of the paper is organized as follows. In \cref{sec:DSWIFT} the damped SWIFT framework is introduced. \Cref{sec:Asymptotic decay of the payoff and density coefficients with respect to the translation parameters} derives bounds for the Fourier coefficients, while \cref{Error Decomposition} presents the error decomposition and the associated parameter-selection framework. Finally, \cref{sec:numREsults} contains numerical experiments. % that validate the theoretical results and illustrate the practical performance of the proposed methodology.

\section{Damped SWIFT}
\label{sec:DSWIFT}

\paragraph{Notation}
\begin{enumerate}[label=(\Roman*)]
    \item $\ii$ is the imaginary unit number. For $z\in\C$, $\Re(z)$ and $\Im(z)$ denote its real and imaginary parts, respectively.
    \item The extended Fourier Transform (FT) of a function $h$, denoted by $\hat{h}$, is defined by $\hat{h}(z) = \int_{\R} h(y) e^{-\ii z y} \dd y$, $z\in\C$, whenever the integral is well defined.
    \item $B(z_0,\epsilon)$ denotes the open ball $\{ z\in \C :\, |z-z_0|<\epsilon\}.$
    \item For $1\le p\le\infty$, $L^p(\R)$ denotes the usual Lebesgue space, and $\|\cdot\|_p$ the usual norm. 
    \item We write $\mathbf 1_A$ for the indicator function of a set $A$.
    \item  The sign function $\operatorname{sgn}(u) $ is equal to $1$ for $u\geq 0$ and $-1$ for $u<0$.
    %\item In proofs, we denote as $C$ any positive constant.
    \item 
    We say that a property holds piecewise for a function $h:\mathbb{R}\to\mathbb{R}$ if there exists a finite partition of $\mathbb{R}$ into intervals such that the property is satisfied on each interval. Throughout this work, we consider functions that are, in this sense, piecewise monotone, piecewise continuous, piecewise $C^1$, or piecewise $C^2$.
\end{enumerate}

Let $(\Omega,\mathcal F,\mathbb F,\Q)$ 	be a filtered probability space, where $\Q$ is a risk-neutral pricing measure and $\mathbb F=(\mathcal F_t)_{0\le t\le T}$ is a filtration satisfying the usual conditions.
We compute the price of European options with payoff $P$ when the log-asset price follows a stochastic model $X_t:=\log S_t$, whose characteristic function is known at time-to-maturity $T>0$. 
%For notational simplicity, we set the valuation time equal to zero, so that $T$ denotes the time to maturity. 
If the payoff is integrable under $\mathbb{Q}$, the value of a European option is then given by
\begin{equation}\label{eq:valuation}
		v(x)
		=
		e^{-r_0T}\E^\Q\!\left[P(X_T)\mid X_0=x\right]
		=
		e^{-r_0T}\int_\R P(y)f(y\mid x) \dd y,
	\end{equation}
with $x$ denoting the current log-asset price, $f(\cdot\mid x)$ the conditional density of $X_T$ given $X_0=x$, and $r_0$ being the risk-free rate. For instance, a European call corresponds to $P(y)=(e^y-K)^+$, whereas a cash-or-nothing digital option with barrier \(B>0\) corresponds to $P(y)=\mathbf 1_{\{y>\log B\}}$.
\begin{assumption}
    \label{ass: f and P regularity. P jumps}
    In this work, we require that both the density $f$ and the payoff $P$ are piecewise of class $C^1$ and have a finite number of critical points. The payoff $P$ is defined in the entire real line and is allowed to jump, provided that the size of its jumps is uniformly bounded.
\end{assumption}
In order to exploit the improved structure available in Fourier space, and to avoid introducing additional truncation parameters in the physical domain as in
%Instead of computing physical-space integrals for density and payoff coefficients, we exploit the improved regularity in Fourier space and map computations there, thereby avoiding the need to introduce truncation parameters for the physical domain, as done in 
\cite{OrtizOosterlee2016,colldeforns2017two}, we introduce exponential damping. 
%To ensure the well-posedness of the valuation formula and the subsequent Fourier-space coefficient construction, we introduce exponential damping.
%Our approach benefits from the compact support of wavelet Fourier transforms, eliminating the need for inverse transformations as required in \cite{BayerBenHammoudaTempone2023}.
For $\alpha \in \R$, we define the damped payoff $P_{\alpha}(y) := e^{-\alpha  y} P(y)$, and density $f_{\alpha}(y \mid x) := e^{\alpha y} f(y \mid x)$. Then,  $P(y) f(y \mid x) = P_{\alpha}(y) f_{\alpha}(y \mid x)$, and 
\begin{equation}\label{eq:valuation-damped}
	v(x)=e^{-r_0 T}\int_{\R}P_{\alpha}(y)\, f_{\alpha}(y \mid x)\, \dd y.
\end{equation}
We now consider the admissible strips
$\delta_P := \{\alpha \in \R\;|\;P_{\alpha} \in L^1(\R)\}$ and $ \delta_f :=\{\alpha \in \R\;|\;	f_{\alpha} \in L^1(\R)\}$ associated with the payoff and the density, respectively. Notice that $\alpha \in \delta_f$ if and only if $
		\E^\Q\!\left[S_T^\alpha\mid S_0=e^x\right]
		<\infty .
		$
\begin{assumption}
\label{ass:noempty strip intersection}
 We assume that  $ \alpha\in \delta_V := \delta_P \cap \delta_f \neq \varnothing$, and  $f_\alpha, P_\alpha \in L^2(\R)$.
\end{assumption} 
Observe that $\delta_V\neq \varnothing$ combined with \cref{ass: f and P regularity. P jumps} guarantees the well-posedness of the valuation formula \eqref{eq:valuation-damped}, as they imply that $f_\alpha\in L^1(\R)$, and
$P_\alpha \in L^1(\R)\cap L^\infty(\R)$. 
We also highlight that, in this work, the $L^2$ condition in \cref{ass:noempty strip intersection} is required only for simplicity and can be omitted. Indeed, the $L^2$ setting implies the Parseval identity, although for Shannon wavelets this identity also holds in a $L^1$ setting plus \cref{ass: f and P regularity. P jumps}, as proved in \ref{sm:proof Parseval shannon}. In fact, $f_\alpha \in L^2(\R)$ is not satisfied by densities having blow-ups that are not square-integrable, as in some configurations of the Variance Gamma distribution. 

\subsection{SWIFT approximation}
\label{subsec:swift-approximation}
Let $m\in \Z$ be a fixed resolution level, and consider the Shannon scaling functions 
	\begin{equation*}%\label{eq:phi-def}
		\phi_{m,k}(y)
		:=
		2^{m/2}\operatorname{sinc}(2^m y-k),
		\qquad
		\operatorname{sinc}(x):=\frac{\sin(\pi x)}{\pi x},
		\qquad
		k\in\mathbb Z.
	\end{equation*}
    As all the $\phi_{m,k}\in L^2(\R)$, their Fourier transforms can be defined as in \cite[Theorem 9.13]{rudin1987real}, and it holds that
    $
		\hat{\phi}_{m,k}(u)
		=
		2^{-m/2}
		e^{-\ii k u/2^m}
		\mathbf 1_{[-2^m\pi,2^m\pi]}(u).
	$
    An introduction to the Shannon wavelets can be found in \cite{OrtizOosterlee2016} and references therein.
In order to compute the option price we now have the following steps.
\paragraph{Step 1: Density coefficients approximation}
The damped density $f_\alpha$ is approximated by a combination of Shannon scaling functions given by
	\begin{equation}\label{eq:density-proj}
		f_{1,\alpha}(y\mid x):=
		\sum_{k\in\Z} D^\alpha_{m,k}(x)\,\phi_{m,k}(y),\quad
		D^\alpha_{m,k}(x):=\int_{\R} f_\alpha(y\mid x)\,\phi_{m,k}(y)\dd y .
	\end{equation}
	Analogously, the payoff coefficients are defined as 
	\begin{equation*}%\label{eq:V-def}
		V^\alpha_{m,k}
		:=
		\int_\R P_\alpha(y)\phi_{m,k}(y) \dd y.
	\end{equation*}
    To compute these coefficients, we do not approximate the cardinal sine with a finite combination of cosines via Vieta’s formula, as in \cite{OrtizOosterlee2016}, and we do not use a midpoint quadrature rule combined with exponential approximation, as in \cite{colldeforns2017two}. Instead, we map the problem to the frequency space using the Parseval identity. Namely, as $f_\alpha \in L^2(\R)$, we have
    %(see \cite[Theorem 9.13 part (b)]{rudin1987real})
	\begin{equation}\label{eq:D-fourier-real}
		D^\alpha_{m,k}(x)=
		\frac{1}{2\pi}
		\int_\R
		\hat{f}_\alpha(u\mid x)
		\overline{\hat{\phi}_{m,k}(u)}
		\dd u=
		\frac{2^{-m/2}}{2\pi}
		\int_{-2^m\pi}^{2^m\pi}
		\hat{f}(u+\ii\alpha\mid x)
		e^{\ii k u/2^m}		
		\dd u,
	\end{equation}
	and analogously,
	\begin{equation}
		V^\alpha_{m,k}=
		\frac{1}{2\pi}
		\int_\R
		\hat{P}_\alpha(u)
		\overline{\hat{\phi}_{m,k}(u)}
		\dd u
		=
		\frac{2^{-m/2}}{2\pi}
		\int_{-2^m\pi}^{2^m\pi}
		\hat{P}(u-\ii\alpha)
		e^{\ii k u/2^m}
		\dd u.
		\label{eq:V-fourier}
	\end{equation}
We denote by $D^{\alpha,Q}_{m,k}(x)$ and $V^{\alpha,Q}_{m,k}$ the quadrature approximations of \eqref{eq:D-fourier-real} and \eqref{eq:V-fourier}, respectively.
% To enable numerical integration via quasi-Monte Carlo (QMC) methods, we map the integration problem in \eqref{eq:D_H} to $[0,1]$. If we perform the change of variables $t = \frac{w}{2^{m+1}\pi} +\frac12$, and we take into account that the density coefficients are real-valued, then
% \begin{align}
% D^{\alpha}_{m,k}(x)&=2^{\frac{m}{2}} \int_{0}^{1} \Re \left\{ \hat f_{\alpha}\!\big(2^{m}\pi(2t-1)|x\big)\,
% e^{\,\ii\pi k (2 t-1)}\right\}dt. \label{eq:D_H_unit}
% \end{align}
% Taking into account that
% \begin{align*}
%  \hat{f}_{\alpha}(w|x) =& \int_{\R} f_{\alpha}(y|x) e^{-\ii w y} dy = \int_{\R} e^{\alpha y }f(y|x) e^{-\ii w y} dy \\
%  =&\int_{\R} f(y|x) e^{-\ii (w + \alpha \ii) y} dy = \hat{f}(w+\alpha \ii|x),
% \end{align*}
% one readily obtains
% \begin{align}
% D^{\alpha}_{m,k}(x)&=2^{\frac{m}{2}} \int_{0}^{1} \Re \left\{ \hat{f}\big(2^{m}\pi(2t-1)+\alpha \ii|x\big)\,
% e^{\,\ii\pi k  (2t-1)}\right\}dt.\label{eq:D_H_unit_2}
% \end{align}
% We denote by $D^{\alpha, \text{Q}}_{m,k}(x)$ the quadrature-approximation of \eqref{eq:D_H_unit_2}. Let $g_{m,\alpha}(t|x) :=  \hat{f}\!\big(2^{m}\pi (2 t-1)+\alpha \ii|x\big) $.

Compared to \cite{OrtizOosterlee2016,colldeforns2017two}, our Fourier-space coefficient computation avoids additional physical-domain truncation parameters, whose selection can be delicate for high-dimensional models, complex payoff structures, or cumulant-based interval choices.
%Compared to \cite{OrtizOosterlee2016,colldeforns2017two}, another advantage of our approach when computing the density and the payoff coefficients 
%%(as in \eqref{eq:D_H}-\eqref{eq:V_payoff}) 
%is that we do not need to introduce additional truncation parameters in the physical domain, which are often difficult to determine as the dimension increases or for complex models and payoff structures (since it depends on cumulants and other parameters). %This is thanks to working in the Fourier space, benefiting from the compact support of the Fourier transforms of the Shannon father wavelets.

\paragraph{Step 2: Truncation of the series}
Let $l,u\in\Z$ with $l< u$, and define $\Lambda:=[l,u]\cap\Z$. 
Truncating \eqref{eq:density-proj} we get
$$ f_{2,\alpha}(y\,|\,x) :=\sum_{k=l}^u D^{\alpha}_{m,k}(x)\,\phi_{m,k}(y).$$
Note that $\Lambda$ need not be symmetric around zero and may be shifted according to the location of dominant density-payoff coefficient products.
The choice of this truncation range is discussed in \cref{sec:algorithm}.

\paragraph{Step 3: Option-price approximation}
Substituting the successive density approximations into \eqref{eq:valuation-damped} yields three approximations, separating projection, truncation, and quadrature errors:
    \begin{align}
		%v(x)
		%&=e^{-r_0T}\int_{\R} P_\alpha(y)\,f_\alpha(y\mid x)\dd y, \notag \\
		v_m(x)
		&:=e^{-r_0T}\int_{\R} P_\alpha(y)\,f_{1,\alpha}(y\mid x)\dd y
		=e^{-r_0T}\sum_{k\in\Z} D^\alpha_{m,k}(x)\,V^\alpha_{m,k}, \label{eq:v_m(x)}\\
		v_{m,\Lambda}(x)
		&:=e^{-r_0T}\int_{\R} P_\alpha(y)\,f_{2,\alpha}(y\mid x)\dd y
		=e^{-r_0T}\sum_{k\in \Lambda} D^\alpha_{m,k}(x)\,V^\alpha_{m,k},\label{eq:v_m,Lambda(x)} \\
		v_{m,\Lambda,Q}(x)
		&:=e^{-r_0T}\sum_{k=l}^{u} D^{\alpha,Q}_{m,k}(x)\,V^{\alpha,Q}_{m,k}. \label{eq:v_m,Lambda,Q(x)}
	\end{align}
	%Here \eqref{eq:vm} uses the orthonormal-expansion identity %$\int_{\R}P_\alpha\,\phi_{m,k}\,dy=V^\alpha_{m,k}$; \eqref{eq:vmL} replaces the exact series by its truncation \eqref{eq:truncated-density}; and \eqref{eq:vmLQ} replaces both exact coefficient families by their quadrature counterparts. 
    The total error
	$v-v_{m,\Lambda,Q}$ decomposes accordingly into the projection error due to $v-v_m$, the
	truncation error arising from $v_m-v_{m,\Lambda}$, and the quadrature error
	$v_{m,\Lambda}-v_{m,\Lambda,Q}$, analyzed in \cref{Error Decomposition}.

%The resulting approximation of $v(t_0, x)$ in \eqref{eq:QoI}   is then given by,
%	\begin{align*}
%		v_1(t_0, x)&:= e^{-r(T-t_0)}\int_\R P_\alpha(y) f_{1,\alpha}(y\,|\,x) \,dy \\
%        &\approx  e^{-r (T-t_0)} \int_{\R} P_{\alpha}(y) f_{2, \alpha}(y|x)  dy =  e^{-r (T-t_0)}	\sum_{k=l}^u D^{\alpha,\text{Q}}_{m, k}(x) \int_{\R} P_{\alpha}(y) \phi_{m, k}(y)  dy \\
%		&=  e^{-r (T-t_0)}	\sum_{k=l}^u D^{\alpha}_{m,k}(x)\, V^{\alpha}_{m,k}\approx  e^{-r (T-t_0)}	\sum_{k=l}^u D^{\alpha,\text{Q}}_{m,k} (x)\, V^{\alpha,\text{Q}}_{m,k}:=	v_2(t_0, x).
%	\end{align*}

\section{Coefficients decay with respect to the translation parameter}\label{sec:Asymptotic decay of the payoff and density coefficients with respect to the translation parameters}
    
In this section, we study the decay of the SWIFT coefficients wrt the translation parameter. First, consider the following: for $h\in L^1(\R)$, $r>0$ and $y\in \R$ let
	\begin{equation}\label{eq:IR-def-rewrite}
		I_r[\hat{h}](y):=\frac{1}{2\pi}\int_{-r}^{r}\hat{h}(u) e^{\ii u y}\dd u,\qquad y\in\R.
	\end{equation}
Observe that if $r=2^{m}\pi$, $y_k=k/2^{m}$ then $D^\alpha_{m,k}= 2^{-m/2} I_r[\hat{f}_\alpha](y_k)$ and $V^\alpha_{m,k}= 2^{-m/2} I_r[\hat{P}_\alpha](y_k)$. Therefore, up to a factor $2^{-m/2}$, the decay of the SWIFT coefficients wrt $k$ will be implied by the one of \eqref{eq:IR-def-rewrite} wrt $y\in \R$. On the other hand, $I_r[\hat{h}](y)$ can be seen as a Fourier inversion formula for $h$, truncated in the interval $[-r,r]$. Therefore, any bound for $|I_r[\hat{h}](y)|$  will reasonably depend on $h(y)$ plus an error due to truncation in the frequency space. 
%In this section, we provide sufficient assumptions for which this intuition is mathematically true. 

However, when $h=f_\alpha$, $h$ might not be available in a closed formula, and the tail behaviour of the undamped density function has to be inferred from the analytical continuation of its FT. If $\hat{f}_\alpha$ can be extended to an entire function, then the decay of $f_\alpha(y)$ must be faster than $e^{-C|y|}$ for all $C>0$, which is the case of \cref{subsec:ligh-tailed density}, where we derive Gaussian-type decay. On the other hand, if $\hat{f}_\alpha$ is a meromorphic function or it has branch-points, then exponential-polynomial decay can be proved, as we show in \cref{subsec:semi-heavy tailed distr}. %Namely, the exponential decay rate is determined by the distance of the nearest singularity to the real axis, while the algebraic prefactor is determined by the order of the singularity.

The error due to truncation in the frequency space can be studied using integration by parts, and as observed in \cite[Section 5.2]{miller2006applied} leads to an algebraically small contribution. The following result shows this fact under quite minimal assumptions.
\begin{lemma}\label{lem:fourier box value}
		Let $h\in L^1(\R)$ be real-valued, and assume that there exists $r>0$ such that $\hat{h}\in C^{N+1}(|u|>r)$ for some $N\in \mathbb{N}$, $\lim_{|u|\to \infty} \hat{h}^{(j)}(u)=0$ for $0\leq j\leq N$, and that $\hat{h}^{(N+1)} \in L^1(|u|>r)$.  Then, for $y\ne0$,
		\begin{equation}\label{eq:fourier box value}
				\frac{1}{2\pi}\int_{|u|\ge r}\hat{h}(u) e^{\ii u y}\dd u
				=
				-\frac{1}{\pi y}
				\sum_{n=0}^{N}
				\Im\!\left[\ii^n e^{\ii r y}\hat{h}^{(n)}(r) \right] y^{-n}
				+\mathcal R_N(r,y),
		\end{equation}
		with
		\begin{equation}\label{eq:four box remainder-bound}
				|\mathcal R_N(r,y)|
				\le
				\frac{1}{2\pi|y|^{N+1}}
				\int_{|u|\ge r}|\hat{h}^{(N+1)}(u)|\dd u .
		\end{equation}
	\end{lemma}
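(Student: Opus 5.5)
The plan is to reduce everything to a one-sided integral over $[r,\infty)$ using the Hermitian symmetry of $\hat h$, and then integrate by parts $N+1$ times.

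\textbf{Reduction to one side.} Since $h$ is real-valued, $\hat h(-u)=\overline{\hat h(u)}$, and differentiating gives $\hat h^{(j)}(-u)=(-1)^j\overline{\hat h^{(j)}(u)}$. In particular $|\hat h^{(N+1)}(-u)|=|\hat h^{(N+1)}(u)|$. Substituting $u\mapsto -u$ shows that the contribution of $\{u\le -r\}$ is the complex conjugate of the contribution of $\{u\ge r\}$. Hence
\[
\frac{1}{2\pi}\int_{|u|\ge r}\hat h(u)e^{\ii uy}\dd u=\frac{1}{\pi}\Re\int_r^\infty \hat h(u)e^{\ii uy}\dd u .
\]
Because $\hat h$ need not be integrable on $|u|>r$, every integral over an unbounded range is read as $\lim_{R\to\infty}\int_r^R$. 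The argument below shows that this limit exists.

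\textbf{Repeated integration by parts.} On $[r,R]$, integrate by parts using $e^{\ii uy}=\frac{\dd}{\dd u}\bigl(e^{\ii uy}/(\ii y)\bigr)$, which is valid for $y\neq0$. Repeating this $N+1$ times gives
\[
\int_r^R \hat h e^{\ii uy}\dd u=\sum_{n=0}^N\frac{(-1)^n}{(\ii y)^{n+1}}\Bigl[\hat h^{(n)}(u)e^{\ii uy}\Bigr]_r^R+\Bigl(\frac{-1}{\ii y}\Bigr)^{N+1}\int_r^R\hat h^{(N+1)}(u)e^{\ii uy}\dd u .
\]
Now let $R\to\infty$. The boundary terms at $R$ vanish because $\hat h^{(j)}(R)\to0$ for $0\le j\le N$. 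The last integral converges absolutely because $\hat h^{(N+1)}\in L^1(|u|>r)$. Together these justify the limit, so the one-sided integral exists as claimed.

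\textbf{Simplifying the boundary terms.} Each remaining boundary term at $r$ is $-\frac{(-1)^n}{(\ii y)^{n+1}}\hat h^{(n)}(r)e^{\ii ry}$. Using $(-1)^n(-\ii)^{n+1}=-\ii\,\ii^n$, this equals $\ii\,\ii^n e^{\ii ry}\hat h^{(n)}(r)\,y^{-n-1}$. Since $\Re(\ii w)=-\Im w$ for any $w\in\C$, applying $\frac1\pi\Re$ gives exactly
\[
-\frac{1}{\pi y}\sum_{n=0}^N\Im\bigl[\ii^n e^{\ii ry}\hat h^{(n)}(r)\bigr]y^{-n},
\]
which is the sum in \eqref{eq:fourier box value}.

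\textbf{The remainder.} Define $\mathcal R_N(r,y):=\frac1\pi\Re\bigl[(-1/(\ii y))^{N+1}\int_r^\infty \hat h^{(N+1)}e^{\ii uy}\dd u\bigr]$. Its modulus is at most $\frac{1}{\pi|y|^{N+1}}\int_r^\infty|\hat h^{(N+1)}|$. By the symmetry $|\hat h^{(N+1)}(-u)|=|\hat h^{(N+1)}(u)|$, this equals $\frac{1}{2\pi|y|^{N+1}}\int_{|u|\ge r}|\hat h^{(N+1)}|$, which is \eqref{eq:four box remainder-bound}.

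The only delicate point is that the left-hand side of \eqref{eq:fourier box value} is only conditionally convergent. I will handle this by working on $[r,R]$ throughout and passing to the limit only after integrating by parts, so that absolute integrability is needed only for $\hat h^{(N+1)}$.
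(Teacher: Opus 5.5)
Your proof is correct and follows essentially the paper's route: repeated integration by parts combined with the Hermitian symmetry $\hat{h}(-u)=\overline{\hat{h}(u)}$. The only difference is one of ordering. You apply the symmetry up front to reduce to $\frac{1}{\pi}\Re\int_r^\infty$, whereas the paper integrates by parts on both half-lines and uses the symmetry afterwards to merge the boundary terms at $\pm r$ into imaginary parts. Your working on $[r,R]$ before letting $R\to\infty$ also makes explicit the conditional convergence of the left-hand side, which the paper leaves implicit.
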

\begin{proof}
  See \cref{app:lem:fourier box value}.
\end{proof}
In this work, \cref{lem:fourier box value} is applied to $f_\alpha$ and $P_\alpha$ with $N=0$ and $r=2^m \pi$. Hence, for $k\in \Z\setminus \{0\}$, and $y_k := k/2^m$ we obtain
    \begin{align*}
    &\hspace{-0.2cm}\left|\frac{1}{2\pi}\int_{|u|\ge 2^m \pi} \hat{f}_\alpha (u) e^{\ii u y_k}\dd u\right|  \leq 
        %\frac{(-1)^{k+1}}{\pi y}
        \varepsilon_D(m)|y_k|^{-1} + O(y_k^{-2}),\,\varepsilon_D(m)=\frac{1}{\pi} \left|\Im\left[ \hat{f} (2^m \pi+ \ii \alpha)\right]\right|,\\
%&=\varepsilon_D(m)\left|\frac{k}{2^m}\right|^{-1} + O(k^{-2}), \\
        &\hspace{-0.2cm} \left|\frac{1}{2\pi}\int_{|u|\ge 2^m \pi} \hat{P}_\alpha (u) e^{\ii u y_k}\dd u\right|  \leq 
        %\frac{(-1)^{k+1}}{\pi y}
        \varepsilon_V(m)|y_k|^{-1} + O(y_k^{-2}),\varepsilon_V(m) = {\frac{1}{\pi} \left|\Im\left[ \hat{P} (2^m \pi- \ii \alpha)\right]\right|}.%\\
        %&=\varepsilon_V(m)\left|\frac{k}{2^m}\right|^{-1} + O(k^{-2}).
    \end{align*}
However, it should be noted that sufficient conditions for \cref{lem:fourier box value} can be framed in terms of admissible strips. In fact, we have the following result.

\begin{lemma}
    \label{lem:four box assumption with strip payoff}
    Let $h$ be a real-valued piecewise $C^2$ function. Consider $\alpha$ in the interior of $\delta_{h}\cap \delta_{h^{(1)}}\cap \delta_{h^{(2)}}$, with $h^{(1)}$ and $h^{(2)}$ being the first and second derivatives of $h$, respectively. Then $h_\alpha$ satisfies the assumptions of \cref{lem:fourier box value}, with $N=0$ and $r>0$.
\end{lemma}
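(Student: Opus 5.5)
The plan is to verify directly, for $h_\alpha(y)=e^{-\alpha y}h(y)$, the four hypotheses of \cref{lem:fourier box value} with $N=0$. These are: $h_\alpha\in L^1(\R)$; $\hat h_\alpha\in C^1(|u|>r)$; $\hat h_\alpha(u)\to0$ as $|u|\to\infty$; and $\hat h_\alpha^{(1)}\in L^1(|u|>r)$. Every ingredient comes from the fact that $\alpha$ lies in the \emph{interior} of the strip.

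\textbf{Step 1 (weights).} Choose $\epsilon>0$ with $\alpha\pm\epsilon\in\delta_h\cap\delta_{h^{(1)}}\cap\delta_{h^{(2)}}$. Using $|y|\le C_\epsilon(e^{\epsilon y}+e^{-\epsilon y})$, we get $|y|e^{-\alpha y}\le C_\epsilon\,(e^{-(\alpha-\epsilon)y}+e^{-(\alpha+\epsilon)y})$. Hence $h_\alpha$, $y\,h_\alpha$, $y\,h^{(j)}_\alpha$ and $h^{(j)}_\alpha$ all lie in $L^1(\R)$ for $j=1,2$. In particular $h_\alpha\in L^1$. Moreover, by dominated differentiation under the integral, $\hat h_\alpha\in C^1(\R)$ with
\[
\hat h_\alpha^{(1)}(u)=\widehat{g}(u),\qquad g(y):=-\ii\,y\,h_\alpha(y).
\]
A similar bound ($|y|^2e^{-\alpha y}$) is not needed.

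\textbf{Step 2 (decay of $\hat h_\alpha$).} On each interval of the piecewise-$C^2$ partition, integrate by parts once. Note that $h_\alpha'=e^{-\alpha y}(h^{(1)}-\alpha h)\in L^1$. On the unbounded tails, $h_\alpha$ and $h_\alpha'$ are integrable, so $\lim_{y\to\pm\infty}h_\alpha(y)$ exists and must equal $0$. The boundary terms at infinity therefore vanish, and we obtain
\[
\hat h_\alpha(u)=\frac{1}{\ii u}\Bigl(\sum_j J_j e^{-\ii u y_j}+\widehat{h_\alpha'}(u)\Bigr),
\]
where the $J_j$ are the finitely many jumps. This gives $\hat h_\alpha(u)=O(1/u)\to0$.

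\textbf{Step 3 (integrability of $\hat h_\alpha^{(1)}$).} This is the main obstacle. A single integration by parts of $\widehat{g}$ leaves terms $y_jJ_je^{-\ii uy_j}/(\ii u)$, which are \emph{not} integrable on $|u|>r$. So a jump in $h$ must be excluded.

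I would argue that the hypothesis $\alpha\in\delta_{h^{(1)}}$ excludes it, provided $h^{(1)}$ is understood as the (distributional) derivative of $h$. A jump would produce a Dirac mass, and a Dirac mass is not an $L^1$ function. Hence $h$, and thus $g$, is continuous, and only $h^{(1)}$ may jump.

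We then integrate by parts twice on the pieces:
\[
\widehat g(u)=\frac{1}{\ii u}\widehat{g'}(u)=\frac{1}{(\ii u)^2}\Bigl(\sum_j \tilde J_j e^{-\ii u y_j}+\widehat{g''}(u)\Bigr).
\]
Here the $\tilde J_j$ are the jumps of $g'=-\ii(h_\alpha+y h_\alpha')$. Also, $g''$ is a combination of $h_\alpha'$, $y h_\alpha''$ and $y h_\alpha'$, all integrable by Step 1 after expanding $h_\alpha''=e^{-\alpha y}(h^{(2)}-2\alpha h^{(1)}+\alpha^2h)$. Boundary terms at $\pm\infty$ vanish by the same argument as in Step 2, applied to $g$ and $g'$; this uses the integrability of $g,g',g''$ on the tails.

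Therefore $|\hat h_\alpha^{(1)}(u)|\le C/u^2$, which is integrable on $|u|\ge r$ for every $r>0$, and this completes the verification.
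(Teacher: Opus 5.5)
Your proof is correct and has the same skeleton as the paper's. The interior of the strip gives $y\,h_\alpha\in L^1(\R)$, so $\hat h_\alpha\in C^1$ with $\hat h_\alpha^{(1)}=\widehat g$ for $g=-\ii y h_\alpha$, and two integrations by parts give $|\widehat g(u)|\lesssim u^{-2}$.

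The genuine difference is that you keep the boundary terms at the breakpoints. The paper writes $\hat h_\alpha^{(1)}(u)=\ii u^{-2}\widehat{g^{(2)}}(u)$ with no boundary terms, which is only valid when $g$ and $g'$ are continuous. Your bookkeeping shows two things the paper's argument does not:
\begin{itemize}
\item Kinks of $h$ (jumps of $h^{(1)}$, as for calls and puts) are harmless, since they only add terms $\tilde J_j e^{-\ii u y_j}/(\ii u)^2$.
\item Jumps of $h$ away from the origin are fatal.
\end{itemize}
The second point is a defect of the statement, not of your proof. Take $h=\mathbf 1_{(c,\infty)}$ with $c\neq 0$. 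The classical piecewise derivatives vanish, so every $\alpha>0$ is admissible, yet
\[
\hat h_\alpha^{(1)}(u)=-\ii\,e^{-(\alpha+\ii u)c}\Bigl(\frac{c}{\alpha+\ii u}+\frac{1}{(\alpha+\ii u)^{2}}\Bigr),
\]
which behaves like $|c|e^{-\alpha c}/|u|$ and is not in $L^1(|u|>r)$. So a restriction of your kind is necessary. Note that it excludes digital payoffs with $\log B\neq 0$, which \cref{ass: f and P regularity. P jumps} explicitly allows.

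What I would change is how you impose the restriction. Reading $h^{(1)}$ distributionally while reading $h^{(2)}$ classically is inconsistent. Under a fully distributional reading, a kink puts a Dirac mass into $h^{(2)}$, so $\delta_{h^{(2)}}=\varnothing$ for calls and puts, and the lemma becomes vacuous exactly where it is used. It is cleaner to keep classical piecewise derivatives and add an explicit hypothesis: $h$ is continuous, or more sharply, continuous on $\R\setminus\{0\}$. A jump at $y=0$ leaves $g=-\ii y h_\alpha$ continuous and only contributes an $O(u^{-2})$ term, consistent with the case $c=0$ above. Your Step 2 is also unnecessary: Riemann--Lebesgue applied to $h_\alpha\in L^1(\R)$ already gives $\hat h_\alpha(u)\to 0$.
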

\begin{proof} 
See \cref{app:lem:fourier box value}.
\end{proof}
Since payoff functions are explicitly known in closed form and, in most applications, are piecewise smooth, \cref{lem:four box assumption with strip payoff} provides a convenient way to verify that the assumptions of \cref{lem:fourier box value} are satisfied by the damped payoff.
On the other hand, for density functions where only the FT is known, we provide the following sufficient conditions, also well-known in the literature (see for example \cite[IX.3]{simon1975methods}).
\begin{lemma}
\label{lem:four box assumption with strip density}
Let $h\in L^1(\R)$ be a real-valued function and $\alpha$ in the interior of $\delta_h$. If there exists $\epsilon>0$ such that 
$\sup_{|w-\alpha|< \epsilon}\|\hat{h}(\cdot + \ii w)\|_1 <\infty$, then $h_\alpha$ satisfies the assumptions of \cref{lem:fourier box value} for any $N\in \mathbb{N}$ and $r>0$.
\end{lemma}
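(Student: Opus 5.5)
The plan is a Paley--Wiener-type argument. Analyticity of $\hat h$ in the strip $S_\epsilon:=\{z:\ |\Im z-\alpha|<\epsilon\}$, combined with uniform $L^1$ bounds on horizontal lines, will control all derivatives of $\hat h_\alpha(u)=\hat h(u+\ii\alpha)$ through Cauchy estimates.

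First I shrink $\epsilon$ so that $[\alpha-\epsilon,\alpha+\epsilon]\subset\delta_h$; this is possible because $\alpha$ is interior to $\delta_h$. For $z=u+\ii w$ with $w$ in this interval, $\hat h(z)=\int h(y)e^{wy}e^{-\ii uy}\dd y$ converges absolutely. Differentiation under the integral sign is dominated locally uniformly, so $\hat h$ is holomorphic on $S_\epsilon$. Hence $\hat h_\alpha(u)=\hat h(u+\ii\alpha)$ is $C^\infty$ on $\R$, and in particular $\hat h_\alpha\in C^{N+1}(|u|>r)$ for every $N$ and $r$. Real-valuedness of $h$ is not needed here.

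Next I prove integrability of the derivatives. I fix $\rho<\epsilon/2$ and apply the Cauchy integral formula on the circle of radius $\rho$ centred at $u+\ii\alpha$. This gives $|\hat h^{(n)}(u+\ii\alpha)|\le n!\rho^{-n}\frac{1}{2\pi}\int_0^{2\pi}|\hat h(u+\ii\alpha+\rho e^{\ii\theta})|\dd\theta$. I integrate in $u$ over $\R$ and use Tonelli to swap the order. For each fixed $\theta$, the inner integral is $\|\hat h(\cdot+\ii(\alpha+\rho\sin\theta))\|_1$, because the shift $\rho\cos\theta$ in the real direction does not change the $L^1$ norm. By hypothesis this is bounded by $M:=\sup_{|w-\alpha|<\epsilon}\|\hat h(\cdot+\ii w)\|_1$. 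Therefore $\|\hat h_\alpha^{(n)}\|_{L^1(\R)}\le n!\rho^{-n}M<\infty$ for all $n$, which gives $\hat h_\alpha^{(N+1)}\in L^1(|u|>r)$.

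The main obstacle is the decay condition $\hat h_\alpha^{(j)}(u)\to0$ as $|u|\to\infty$. I plan to use a mean-value/Cauchy argument again, now with area averages. Since $\hat h^{(j)}$ is holomorphic, $|g|$ is subharmonic for $g=\hat h^{(j)}$. So $|\hat h^{(j)}(u+\ii\alpha)|$ is bounded by its average over the disc $B(u+\ii\alpha,\rho)$. That average is at most $\frac{1}{\pi\rho^2}\int_{\alpha-\rho}^{\alpha+\rho}\int_{u-\rho}^{u+\rho}|\hat h^{(j)}(s+\ii w)|\dd s\dd w$. By the previous step applied with a smaller radius, $\int_\R|\hat h^{(j)}(s+\ii w)|\dd s\le C_j$ uniformly for $|w-\alpha|\le\rho$. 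Consequently $F(s):=\int_{\alpha-\rho}^{\alpha+\rho}|\hat h^{(j)}(s+\ii w)|\dd w$ lies in $L^1(\R)$. The bound then reads $|\hat h^{(j)}_\alpha(u)|\le(\pi\rho^2)^{-1}\int_{u-\rho}^{u+\rho}F(s)\dd s$, and the right-hand side tends to $0$ as $|u|\to\infty$ by dominated convergence, since it is a tail integral of an $L^1$ function. This verifies all hypotheses of \cref{lem:fourier box value} for arbitrary $N\in\mathbb N$ and $r>0$.

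As an alternative for $j=0$, the Riemann--Lebesgue lemma applied to $h_\alpha\in L^1$ gives the decay directly. The disc-average argument is needed for the derivatives, because $y^jh_\alpha$ is integrable only owing to the strip.
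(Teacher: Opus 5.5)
Your proof is correct. Its first two steps coincide with the paper's. You obtain the analytic extension of $\hat h$ to the strip $|\Im z-\alpha|<\epsilon$ from interiority of $\alpha$. You then apply the Cauchy integral formula on circles of radius $\rho$ and use Tonelli to get $\|\hat h_\alpha^{(n)}\|_{L^1(\R)}\le n!\rho^{-n}\sup_{|w-\alpha|<\epsilon}\|\hat h(\cdot+\ii w)\|_1$.

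The only difference is the decay step, which you treat as the main obstacle. The paper disposes of it in one line, ``all of its derivatives are in $L^1(\R)$, and hence they vanish at infinity.'' This works because every derivative is integrable, so $\hat h_\alpha^{(j)}(u)=\hat h_\alpha^{(j)}(0)+\int_0^u\hat h_\alpha^{(j+1)}(t)\dd t$ has finite limits at $\pm\infty$. Those limits must be $0$ since $\hat h_\alpha^{(j)}$ is itself integrable. Your subharmonic disc-average argument is valid: it uses uniform line bounds for $|w-\alpha|\le\rho$ with $\rho<\epsilon/2$, and Fubini to show $F\in L^1(\R)$. But it is redundant, since $\hat h_\alpha^{(j+1)}\in L^1(\R)$ was already established in your second step.

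Your closing remark that the disc argument is \emph{needed} for the derivatives is also inaccurate. Since $\alpha$ is interior to $\delta_h$, we have $|y|^je^{\alpha y}|h(y)|\le C_j\bigl(e^{(\alpha-\epsilon)y}+e^{(\alpha+\epsilon)y}\bigr)|h(y)|$, which is integrable for every $j$. Riemann--Lebesgue applied to $\hat h_\alpha^{(j)}=\widehat{(-\ii y)^j h_\alpha}$ therefore gives the decay of all derivatives directly. This is essentially the route taken in the proof of \cref{lem:four box assumption with strip payoff}. So nothing is lost by your route, but the paper's observation (or Riemann--Lebesgue) makes the last step immediate.
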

\begin{proof}
See \cref{app:lem:fourier box value}.
\end{proof}

\subsection{Light-tailed densities}
\label{subsec:ligh-tailed density}
We now focus on FT with Gaussian decay, such as those obtained from Geometric Brownian Motion (GBM). 
\begin{proposition}
\label{prop:GBM tail bound}
Let $\hat{h}$ be an entire function and assume that, for some $C,\gamma>0$,
		\begin{equation}\label{eq:gaussian-growth-assumption}
			|\hat{h}(u)|\le C e^{-\gamma(\Re^2( u)-\Im^2(u))},\qquad u\in\C.
		\end{equation}
		Then, for every $r>0$ and $y\ne0$,
		\begin{equation}\label{eq:est coeff light tailed}
				|I_r[\hat{h}](y)|
				\le
				\frac{C}{2\sqrt{\pi\gamma}}e^{-\frac{y^2}{4\gamma}}
				+\frac{1}{\pi}\, \left| \Im\left[e^{\ii r  y}\hat{h}(r)\right]\right| |y|^{-1} + O(y^{-2}).
		\end{equation}
\end{proposition}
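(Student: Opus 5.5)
The plan is to split $I_r[\hat h](y)$ into the full inverse Fourier integral minus the tail over $|u|\ge r$:
\begin{equation*}
I_r[\hat h](y)=\frac{1}{2\pi}\int_{\R}\hat h(u)e^{\ii uy}\dd u-\frac{1}{2\pi}\int_{|u|\ge r}\hat h(u)e^{\ii uy}\dd u .
\end{equation*}
Both integrals converge absolutely, because on the real axis \eqref{eq:gaussian-growth-assumption} gives $|\hat h(u)|\le Ce^{-\gamma u^2}$.

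The tail is handled by \cref{lem:fourier box value} with $N=0$. That lemma needs $\hat h\in C^1$, which holds since $\hat h$ is entire. It needs $\hat h(u)\to0$, which follows from the Gaussian bound. It also needs $\hat h'\in L^1(|u|>r)$. For this I would use Cauchy's estimate on the disc $B(u,1)$: for real $u$, every $w$ in the disc satisfies $\Re^2(w)-\Im^2(w)\ge(|u|-1)^2-1$, so $|\hat h'(u)|\le Ce^{-\gamma((|u|-1)^2-1)}$, which is integrable. The lemma then gives the term $\frac1\pi|\Im[e^{\ii ry}\hat h(r)]|\,|y|^{-1}$ in \eqref{eq:est coeff light tailed}. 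The remainder is at most $\frac{1}{2\pi y^2}\|\hat h'\|_{L^1(|u|>r)}$, which is $O(y^{-2})$. The lemma requires $h\in L^1$ and real-valued; I will treat this as part of the standing context, since $\hat h$ is the transform of such an $h$. If one wants to avoid that hypothesis, the integration by parts in its proof needs only the properties of $\hat h$ already checked.

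For the full integral I would shift the contour. The main step is showing that this integral is bounded by $\frac{C}{2\sqrt{\pi\gamma}}e^{-y^2/(4\gamma)}$. Write $J(y)=\frac1{2\pi}\int_\R \hat h(u)e^{\ii uy}\dd u$. For any real $s$, apply Cauchy's theorem to $\hat h(z)e^{\ii zy}$ on the rectangle with corners $\pm R$ and $\pm R+\ii s$, so that $J(y)=\frac{1}{2\pi}\int_\R\hat h(u+\ii s)e^{\ii(u+\ii s)y}\dd u$. The vertical sides vanish as $R\to\infty$ because on them $|\hat h(\pm R+\ii t)e^{\ii(\pm R+\ii t)y}|\le Ce^{-\gamma(R^2-s^2)}e^{|s||y|}\to0$.

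On the shifted line, $|\hat h(u+\ii s)e^{\ii (u+\ii s)y}|\le Ce^{-\gamma u^2}e^{\gamma s^2-sy}$. Integrating in $u$ gives $|J(y)|\le\frac{C}{2\pi}\sqrt{\pi/\gamma}\,e^{\gamma s^2-sy}$. Choosing $s=y/(2\gamma)$ minimizes the exponent, which becomes $-y^2/(4\gamma)$. The constant is $\frac{C}{2\pi}\sqrt{\pi/\gamma}=\frac{C}{2\sqrt{\pi\gamma}}$, matching the statement. Combining the two parts by the triangle inequality gives \eqref{eq:est coeff light tailed}.

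I expect the contour-shift step to be the main obstacle. The point to get right is that the growth bound is in terms of $\Re^2-\Im^2$, which is exactly what makes the vertical sides vanish and the shifted integrand Gaussian in $u$. The remaining checks are routine: the sign of $s$ relative to $y$, and the fact that the Cauchy estimate for $\hat h'$ uses the same bound.
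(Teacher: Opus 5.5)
Your proof is correct and takes the same route as the paper. You split $I_r[\hat h](y)$ into the full inverse integral minus the tail, control the tail with \cref{lem:fourier box value} for $N=0$ (hypotheses checked via Cauchy estimates), and shift the full integral to the line $\Im u = y/(2\gamma)$, where the Gaussian bound yields $\frac{C}{2\sqrt{\pi\gamma}}e^{-y^2/(4\gamma)}$.

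The one inaccuracy is your aside about dropping real-valuedness. The integration by parts does go through without it, but the boundary term equals $-\frac{1}{\pi y}\Im[e^{\ii r y}\hat h(r)]$ only because $\hat h(-u)=\overline{\hat h(u)}$. Without that symmetry you would only get a bound like $\frac{1}{2\pi|y|}\bigl(|\hat h(r)|+|\hat h(-r)|\bigr)$. So keeping $h$ real-valued as standing context, as you and the paper both do, is necessary.
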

	\begin{proof} See \ref{sm:proofsLightTail}. 
	\end{proof}
\begin{corollary}
\label{cor:gbm-decay-}
Assume that the spot price $S_t$ is described by a GBM 
$\dd S_t = (r_0-q)S_t \dd t + \sigma S_t \dd W_t$.
Then the FT of $\log S_T$ is given by  $\hat{f}(u) = \exp(-\ii u \mu - \gamma  u^2 )$ where $\mu = \log\left(\frac{S_0}{K}\right)+(r_0-q-\frac12\sigma^2)T$, and $\gamma=\frac12\sigma^2 T>0$. 
For all $y_k:=k/2^m$, $y_{k,\mu}:=y_k-(\mu+2 \gamma \alpha)\neq 0$, up to $O(y_{k,\mu}^{-2})$,  we have
		\begin{equation}\label{eq:gbm-coeff-bound}
				|D^\alpha_{m,k}|
				\le
				2^{-m/2}
				\left(
				e^{\alpha\mu+ \gamma\alpha^2}\frac{e^{-\frac{y_{k,\mu}^2}{4\gamma}}}{2\sqrt{\pi \gamma}}
				+\frac{1}{\pi} \left|\Im\left[ \hat{f}(2^m \pi + \ii \alpha)\right]\right||y_{k,\mu}|^{-1}
				\right).
		\end{equation}
\end{corollary}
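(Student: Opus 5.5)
The plan is to reduce the corollary to \cref{prop:GBM tail bound}, after removing the linear phase from the damped transform. Since $D^\alpha_{m,k}=2^{-m/2}I_r[\hat f_\alpha](y_k)$ with $r=2^m\pi$, it suffices to bound $I_r[\hat f_\alpha](y_k)$, where $\hat f_\alpha(u)=\hat f(u+\ii\alpha)$.

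First I will rewrite $\hat f_\alpha$ explicitly. Substituting $u+\ii\alpha$ gives
\[
-\ii(u+\ii\alpha)\mu-\gamma(u+\ii\alpha)^2=\alpha\mu+\gamma\alpha^2-\ii u(\mu+2\gamma\alpha)-\gamma u^2 .
\]
Hence
\[
\hat f_\alpha(u)=e^{\alpha\mu+\gamma\alpha^2}\,e^{-\ii u(\mu+2\gamma\alpha)}\,e^{-\gamma u^2}.
\]
Set $\hat g(u):=e^{\alpha\mu+\gamma\alpha^2}e^{-\gamma u^2}$. The phase factor is then absorbed into the Fourier inversion kernel:
\[
I_r[\hat f_\alpha](y)=\frac{1}{2\pi}\int_{-r}^{r}\hat g(u)e^{\ii u(y-\mu-2\gamma\alpha)}\dd u=I_r[\hat g](y-\mu-2\gamma\alpha).
\]
At $y=y_k$ this equals $I_r[\hat g](y_{k,\mu})$.

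Next I check the hypotheses of \cref{prop:GBM tail bound} for $\hat g$. The function $\hat g$ is entire. For $u=a+\ii b$ one has $|e^{-\gamma u^2}|=e^{-\gamma(a^2-b^2)}$, so \eqref{eq:gaussian-growth-assumption} holds with $C=e^{\alpha\mu+\gamma\alpha^2}$ and the same $\gamma$. The proposition, applied at $y=y_{k,\mu}\neq0$, therefore yields the Gaussian term $\frac{C}{2\sqrt{\pi\gamma}}e^{-y_{k,\mu}^2/(4\gamma)}$. It also yields the boundary term $\frac1\pi|\Im[e^{\ii r y_{k,\mu}}\hat g(r)]||y_{k,\mu}|^{-1}$, plus a remainder of order $O(y_{k,\mu}^{-2})$.

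The only delicate step is matching the boundary term to the form stated in the corollary. With $r=2^m\pi$ we have $e^{\ii r y_k}=(-1)^k$, which is real. Moreover $e^{\ii r y_{k,\mu}}\hat g(r)=(-1)^k\hat f_\alpha(r)=(-1)^k\hat f(2^m\pi+\ii\alpha)$. Since $(-1)^k$ is real, $|\Im[(-1)^k z]|=|\Im z|$, and the boundary term becomes $\frac1\pi|\Im[\hat f(2^m\pi+\ii\alpha)]||y_{k,\mu}|^{-1}$. Multiplying by $2^{-m/2}$ then gives \eqref{eq:gbm-coeff-bound}.

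It remains to verify the closed form of $\hat f$ with the paper's sign convention $\hat h(z)=\int h e^{-\ii zy}$. I will take $X_T$ to be Gaussian with mean $\mu$ (in the normalized log-moneyness variable) and variance $\sigma^2T=2\gamma$. This is a routine completion-of-the-square computation.
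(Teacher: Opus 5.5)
Your proposal is correct and follows the same route as the paper. You write $\hat f_\alpha(u)=e^{\alpha\mu+\gamma\alpha^2}e^{-\gamma u^2}e^{-\ii u(\mu+2\gamma\alpha)}$, absorb the linear phase by evaluating at $y_{k,\mu}$, and invoke \cref{prop:GBM tail bound} with $C=e^{\alpha\mu+\gamma\alpha^2}$ and $r=2^m\pi$. Your explicit check that $e^{\ii r y_{k,\mu}}\hat g(r)=(-1)^k\hat f(2^m\pi+\ii\alpha)$, which puts the boundary term in the stated form, is a step the paper's proof leaves implicit.
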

\begin{proof} See \ref{sm:proofsLightTail}. 
\end{proof}

%See SM2 for the application of these results to GBM dynamics. 
Notice that the bound \eqref{eq:gbm-coeff-bound} exhibits two different types of decay: the first term behaves as the density function%of a normal distribution
, while the second one goes as $|y_{k,\mu}|^{-1}$ times a constant that depends on $\hat{f}_\alpha(2^m\pi )$. In the following section, we prove that this type of bound also holds for fat-tailed distributions.

\subsection{Payoff functions and semi-heavy/heavy tailed densities}
\label{subsec:semi-heavy tailed distr}
We now consider functions whose Fourier transforms have isolated singularities away from the real axis. This setting covers damped payoffs and several (semi)-heavy-tailed models.
Exponential decay of Fourier coefficients is linked to analytic continuation and the singularities of the FT. The analysis shows that each isolated singularity contributes an exponentially small term. However, only the singularities closest to the real axis significantly affect the asymptotic behavior, while others are negligible. The exponential decay rate is governed by the imaginary part of the nearest singularity, and the polynomial prefactor depends on the order of the singularity.
%Exponential decay of the Fourier coefficients is linked to analytic continuation of $\hat f$ into a strip around the real axis. We assume that every singularity of the FT function is isolated and is either a pole or a branch point.  We start by analyzing the case of poles and then move on to branch points. In both cases, each isolated singularity produces an exponentially small contribution. When comparing the contributions from all singularities, only those closest to the real axis are significant; the remaining singularities are exponentially smaller in comparison. Consequently, the asymptotic behavior of the Fourier coefficients is fully determined by the singularity or singularities closest to the origin in the half-plane corresponding to the sign of $x$: those in the lower half-plane govern the behavior as $x \to -\infty$, while those in the upper half-plane determine the behavior as $x \to +\infty$. The imaginary part of the closest singularity will give the exponential decay rate, while the order of the pole/branch point will determine the polynomial prefactor.
\begin{definition}
For $ u_n \in  \C $ and $\theta\in (0,\pi)$, we denote by $ \gamma(u_n, \theta) $ the half-line $\{ u_n+  \operatorname{sgn}(\Im(u_n))e^{\ii \theta} t : t > 0  \}$. 
In case $ \theta = \frac{\pi}{2} $, the half-line is denoted as $ \gamma_{u_n} $, and it is called a branch cut starting at the branch point $u_n$.
\end{definition}
A branch cut $\gamma_{u_n}$ is parallel to the imaginary axis and the set $\C \setminus \gamma_{u_n}$ is simply connected. Fixed a definition of logarithm on this set, then for any $ \rho\in \C $, the function $u\mapsto(u-u_n)^{-\rho}$ is well-defined. 
\begin{definition}
%[Following \cite{miller2006applied}]
Let $ \hat{h}$ be defined on $ \C \setminus \{  \gamma_{u_1}, \allowbreak \ldots,\allowbreak \gamma_{u_N} \} $ where $\gamma_{u_1}, \ldots , \gamma_{u_N}$ are branch cuts. We say that $\hat{h}$ has algebraic branch points $u_1, \ldots , u_N$ if it has the form
$$ \hat{h}(u)= g(u) \prod_{n=1}^{N} (u-u_n)^{-{\rho_n}},$$
where $ g:\C\to \C$ is an entire function. The complex numbers $\rho_1, \ldots , \rho_N \in  \C \setminus \{ 0 \}$ are called orders of $ \hat{h} $. If close to some branch point $u_n$, we have that 
$\hat{h}(u)=[(u-u_n)^{-\rho}  \log(u-u_n)]g(u),$
with $-\rho\in \mathbb{N}$, then we say that the branch point is logarithmic of order $\rho$.
\end{definition}
If $ u_n $ is an algebraic branch point for $ \hat{h}$, we can find an open ball $ B(u_n,\epsilon) $ such that $ g(u):=(u-u_n)^{\rho}\hat{h}(u)$  is analytic in $ B(u_n,\epsilon) $.
In particular, if $\rho \in  \mathbb{N}$, then $ u_n $ is a pole of order $ \rho $ for $ \hat{h} $.
We now state the estimation theorem.
\begin{theorem}
  \label{teo:estimate fat tails}
Let $h\in L^1(\R)$ be a piecewise $C^1$ real-valued function, whose FT can be extended on $ \C \setminus \{ \gamma_{u_1}, \ldots , \gamma_{u_N} \}$ as  $ \hat{h} = Z + H $ where $H$ is an entire function and $Z$ has $ N $ algebraic or logarithmic branch points $ u_1, \ldots , u_N $ of orders $ \rho_1, \ldots , \rho_N \in  \C\setminus \{ 0 \}$.
Suppose also that
\begin{itemize}
  \item[i)]  $\hat{h}$ has at most polynomial growth, say $|\hat{h}(u)|\leq C (1+ |u|^q)$ for some $C,q>0$.
 \item[ii)] We can find $ \theta \in  (0,\frac{\pi}{2})$  such that 
 $$\sup_{u \in  \gamma(R, \theta)}\left(|\hat{h}( u)|+ |\hat{h}( -u)|\right), \quad   \sup_{u \in  \gamma(-R, \pi-\theta)}\left(|\hat{h}( u)|+ |\hat{h}( -u)|\right)$$
 tend to zero as $ R\to + \infty $.
  \item[iii)] There are $ a,b >0 $ and $ \gamma^+, \gamma^- >0$ such that
\begin{equation*}
  %\label{eq:assumption singularities position}
  %\begin{cases}
  \left\{
  \begin{aligned}
    &\Im(u_n)>0 \implies \ \Im(u_n)\geq b, \text{ and } \Im(u_n)=b \implies  \Re(\rho_n)\leq  \gamma^+, \\
    &\Im(u_n)< 0 \implies \ \Im(u_n)\leq  -a,\text{ and }  \Im(u_n)=-a \implies \Re(\rho_n)\leq  \gamma^- .\\
  %\end{cases}
  \end{aligned}
  \right.
\end{equation*}
\end{itemize}
%Then there exist $C>0 $ and $ R>0 $ such that
%\begin{equation}
%    \label{eq:estimate fat tails}
%    \begin{aligned}
%      |f(x)|&\leq  C x^{\gamma^+ -1}e^{-bx} ,\qquad x>  R, \\ |f(x)|&\leq  C |x|^{\gamma^- -1}e^{-a|x|}, \qquad  x <  -R.
%    \end{aligned}
%  \end{equation}
Then there exists $C_+,C_- >0 $ such that
\begin{equation}
  \label{eq:est fourier inversion}
    |I_{\infty}[\hat{h}](y)|\leq
    \begin{cases}
      C_+ y^{\gamma^+ - 1}e^{-by} ,\qquad y>  1,\\
      C_-|y|^{\gamma^- -1}e^{-a|y|} ,\qquad y<  -1.
    \end{cases}
\end{equation}
\end{theorem}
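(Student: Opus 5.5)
We prove the bound for $y>1$ by deforming the contour of $I_\infty[\hat h](y)=\frac{1}{2\pi}\int_\R \hat h(u)e^{\ii uy}\dd u$ into the upper half-plane, where $|e^{\ii uy}|=e^{-y\Im u}$. The case $y<-1$ is symmetric: deform into the lower half-plane, with the roles of $b,\gamma^+$ taken by $a,\gamma^-$.

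\emph{Step 1: closing the contour.} We first interpret $I_\infty$ as a limit $\lim_{R\to\infty}\frac{1}{2\pi}\int_{-R}^{R}$. Since $h$ is piecewise $C^1$ and in $L^1$, this improper integral converges and equals $h(y)$ (or the midpoint value at jumps). For fixed $R$ we close the segment $[-R,R]$ by the half-lines $\gamma(R,\theta)$ and $\gamma(-R,\pi-\theta)$ from assumption ii), together with an arc at infinity. On these half-lines $\Im u=t\sin\theta$, so the integrand is bounded by $\sup|\hat h|\,e^{-y t\sin\theta}$. This is integrable in $t$ and tends to $0$ as $R\to\infty$ by ii). The polynomial growth in i) guarantees that the far arcs joining the two half-lines contribute nothing, since $e^{-y\Im u}$ decays exponentially there. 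The region swept out contains the branch points $u_n$ with $\Im u_n>0$ and their vertical cuts $\gamma_{u_n}$; only finitely many are present. Cauchy's theorem, applied in the simply connected region with the cuts removed, then gives
\[
I_\infty[\hat h](y)=\sum_{\Im u_n>0}\frac{1}{2\pi}\oint_{\mathcal C_n}\hat h(u)e^{\ii uy}\dd u+o(1),
\]
where $\mathcal C_n$ is a keyhole (hairpin) contour wrapping the cut $\gamma_{u_n}$. The $o(1)$ term vanishes in the limit. The entire part $H$ contributes nothing beyond this deformation. A pole reduces to a residue.

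\emph{Step 2: the contribution of each cut.} We parametrize the cut by $u=u_n+\ii s$, $s>0$. Then $e^{\ii uy}=e^{\ii u_n y}e^{-sy}$, so every contribution carries the factor $e^{-y\Im u_n}$. We write $Z=g(u)(u-u_n)^{-\rho_n}$ locally, possibly times a logarithm.

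For a pole of order $\rho_n\in\mathbb N$, the residue is $e^{\ii u_n y}$ times a polynomial in $y$ of degree $\rho_n-1$, giving $O(y^{\rho_n-1}e^{-y\Im u_n})$.

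For a genuine branch point, the hairpin integral is $\int_0^\infty[\hat h]_{\mathrm{jump}}(u_n+\ii s)e^{-sy}\dd s$. Here the jump across the cut behaves like $c\,s^{-\rho_n}g(u_n+\ii s)$, up to a multiple of $(1-e^{2\pi\ii\rho_n})$. If $\Re\rho_n\ge1$, the endpoint singularity is non-integrable, and we instead use a small circle of radius $1/y$ around $u_n$ plus the cut from $\ii/y$ onward. A Watson's-lemma/scaling argument, substituting $s=\sigma/y$, gives a bound of order $y^{\Re\rho_n-1}e^{-y\Im u_n}$. The far part of the cut is controlled by the polynomial growth i), which is harmless against $e^{-sy}$ with $y>1$.

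For a logarithmic point, the jump of $\log$ is the constant $2\pi\ii$ times $(u-u_n)^{-\rho}g$, which is again of the same form and yields the same power. Logarithmic factors from the remaining piece can be absorbed because we only claim an upper bound. If extra room is needed, we rely on the fact that the order-$\rho$ convention in the definition already encodes the prefactor.

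\emph{Step 3: collecting terms.} Singularities with $\Im u_n>b$ contribute $O(e^{-(b+\eta)y})$, which is $o(y^{\gamma^+-1}e^{-by})$. Those with $\Im u_n=b$ have $\Re\rho_n\le\gamma^+$ by iii), giving $y^{\Re\rho_n-1}\le y^{\gamma^+-1}$ for $y>1$. Summing the finitely many terms gives $C_+$.

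The main obstacle is Step 2. It requires uniform bounds on the hairpin integrals, handling non-integrable endpoint singularities with $\Re\rho_n\ge1$ and the logarithmic case. It also requires justifying, via i) and ii), that the large-$R$ pieces vanish uniformly in $y>1$ so that the constants $C_\pm$ do not depend on $y$. The natural tool is the local expansion $g(u)=g(u_n)+O(u-u_n)$ on $B(u_n,\epsilon)$ combined with Watson's lemma, as in \cite[Section 5.2]{miller2006applied}.
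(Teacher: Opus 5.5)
Your proposal is correct. Its global architecture matches the paper's proof:
\begin{itemize}
\item you close the contour in the upper half-plane using the sloped half-lines of ii), whose contribution vanishes with the supremum, while the top or far piece is killed by polynomial growth against $e^{-y\Im u}$;
\item you wrap the vertical cuts by hairpins (the paper's $U_n$), and the entire part $H$ drops out;
\item each hairpin contributes $O(y^{\Re\rho_n-1}e^{-y\Im u_n})$, and iii) selects the dominant terms.
\end{itemize}

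The genuine difference is how you treat $\Re\rho_n\ge 1$, where the endpoint singularity on the cut is not integrable. The paper integrates by parts $p_n=\max(0,\lfloor\Re\rho_n\rfloor)$ times along $U_n$, with boundary terms at height $R$ killed by $e^{-yR}$ via Cauchy estimates on $\widetilde g$. It then expands $\widetilde g=(u-u_n)^{\rho_n}\hat h$ into an analytic series plus a $(u-u_n)^{m+\rho_n}$ series, shrinks the small arc, and applies Watson's lemma to get $g(u_n)\Gamma(1-\rho_n)y^{\rho_n-1}$. Finally it reassembles the pieces through $\sum_j\binom{p_n}{j}y^j=(1+y)^{p_n}$.

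You instead use a $y$-dependent Hankel contour. The circle of radius $1/y$ contributes at most a constant times $y^{\Re\rho_n}\cdot y^{-1}\cdot e\,e^{-y\Im u_n}$. On the cut, the substitution $s=\sigma/y$ gives $y^{\Re\rho_n-1}\int_1^\infty\sigma^{-\Re\rho_n}e^{-\sigma}\dd\sigma$.

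Your route is shorter and uniform in $\rho_n$:
\begin{itemize}
\item it needs only a sup bound on the analytic factor $(u-u_n)^{\rho_n}Z$ near $u_n$, since $H$ has no jump;
\item it needs no derivative bounds and no two-series expansion;
\item it does not degenerate for positive integer orders, where the paper's factors $1/(\rho_n-1)\cdots$ blow up and poles must be handled separately by residues in the Supplementary Material.
\end{itemize}
In exchange, the paper's route yields the explicit leading asymptotic term, with the factor $\pi/\Gamma(\rho_n)$, rather than only an upper bound.

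Two points to tighten. First, for uniformity in $y>1$, take the circle radius $\min(1/y,\delta/2)$. Here $\delta$ is chosen so the ball stays inside the analyticity region and away from other cuts; for $1<y<2/\delta$ everything is then trivially bounded.

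Second, your absorption remark for logarithmic points should be justified differently than by the "order convention". Since $-\rho\in\mathbb N$, the singularity $(u-u_n)^{-\rho}\log(u-u_n)$ is integrable at $u_n$, so you can let the circle shrink to zero and no $\log y$ appears. Alternatively, even with radius $1/y$, the extra $y^{\rho-1}\log y$ is $O(y^{\gamma^+-1})$ because $\rho\le 0<\gamma^+$.
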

\begin{proof}
See \cref{app:teo:estimate fat tails}. \ref{sup:residueTheorem} contains a particular case of this theorem where the considered singularities are just poles.
\end{proof}

\begin{corollary}\label{cor: estimate fat tails truncated}
    Let $h$ as in \cref{teo:estimate fat tails} and $r>0$ satisfying the assumptions of \cref{lem:fourier box value}. Then we can find $C_+, C_->0$ such that
  \begin{equation}
  \label{eq:est truncated fourier inversion}
    |I_{r}[\hat{h}](y)|\leq\left\{
    \begin{aligned}
      &C_+y^{\gamma^+ - 1}e^{-by} + \frac{1}{\pi y} \left|\Im\left[ e^{\ii r  y} \hat{h} (r)\right]\right| + O(y^{-2}) ,\qquad y>  1,\\
      &C_-|y|^{\gamma^- -1}e^{-a|y|} + \frac{1}{\pi |y|} \left|\Im\left[ e^{\ii r  y} \hat{h} (r)\right]\right| + O(y^{-2}) ,\qquad y<  -1.
    \end{aligned} \right.
\end{equation}
\end{corollary}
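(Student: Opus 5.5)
The plan is to write the truncated inversion integral as the full inversion integral minus its tail. From the definition \eqref{eq:IR-def-rewrite},
\[
I_r[\hat{h}](y)=I_\infty[\hat{h}](y)-\frac{1}{2\pi}\int_{|u|\ge r}\hat{h}(u)e^{\ii u y}\dd u .
\]
Here $I_\infty[\hat h]$ is understood as the (possibly improper, principal-value) limit $\lim_{R\to\infty} I_R[\hat h]$. This is exactly the object estimated in \cref{teo:estimate fat tails}. The tail integral converges under the hypotheses of \cref{lem:fourier box value}: by integration by parts with $N=0$, it equals a boundary term plus an absolutely convergent integral. The triangle inequality then splits $|I_r[\hat h](y)|$ into two pieces, each of which is handled by an earlier result.

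For the first piece I invoke \cref{teo:estimate fat tails} directly. It gives $|I_\infty[\hat h](y)|\le C_+y^{\gamma^+-1}e^{-by}$ for $y>1$ and $C_-|y|^{\gamma^--1}e^{-a|y|}$ for $y<-1$. For the second piece I apply \cref{lem:fourier box value} with $N=0$, which is admissible by the hypothesis on $r$. This yields
\[
\left|\frac{1}{2\pi}\int_{|u|\ge r}\hat h(u)e^{\ii uy}\dd u\right|\le \frac{1}{\pi|y|}\left|\Im\left[e^{\ii ry}\hat h(r)\right]\right|+|\mathcal R_0(r,y)|,
\]
with $|\mathcal R_0(r,y)|\le (2\pi y^2)^{-1}\|\hat h^{(1)}\|_{L^1(|u|\ge r)}$. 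This remainder is $O(y^{-2})$ with a constant depending only on $r$ and $h$. Adding the two bounds gives \eqref{eq:est truncated fourier inversion} in each half-line $y>1$ and $y<-1$.

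The main point to check is consistency of the meaning of $I_\infty[\hat h]$ in the two results. It must be the same quantity for which the splitting identity holds, i.e.\ $h(y)$ at continuity points by the piecewise $C^1$ inversion theorem, or its symmetric limit. I would argue this as follows. Both $\lim_{R\to\infty}\int_{-R}^{R}\hat h e^{\ii uy}$ and $\int_{|u|\ge r}\hat h e^{\ii uy}=\lim_{R\to\infty}\int_{r\le|u|\le R}\hat h e^{\ii uy}$ exist; the latter by the integration-by-parts representation, using $\hat h\to0$ and $\hat h'\in L^1$. Their difference is the finite integral over $[-r,r]$, so the identity follows by passing to the limit. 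No further uniformity in $r$ is claimed, so the constants $C_\pm$ are simply those from the theorem.
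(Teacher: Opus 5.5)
Your proposal is correct and follows the paper's proof exactly. The paper uses the same triangle inequality $|I_r[\hat h](y)|\le |I_\infty[\hat h](y)| + \bigl|\frac{1}{2\pi}\int_{|u|\ge r}\hat h(u)e^{\ii uy}\dd u\bigr|$, bounding the first term by \cref{teo:estimate fat tails} and the second by \cref{lem:fourier box value} with $N=0$. Your extra check that $I_\infty[\hat h]$ is the symmetric limit $\lim_{R\to\infty}I_R[\hat h]$, so that the splitting identity holds, is a detail the paper leaves implicit; it is consistent with the contour argument in the theorem's proof.
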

\begin{proof}
The result follows immediately by considering $$|I_{r}[\hat{h}](y)|\leq |I_{\infty}[\hat{h}](y)|+\left|\frac{1}{2\pi}\int_{|u|\geq  r} \hat{h}(u)e^{\ii uy}\dd u\right|,$$
and by applying \cref{teo:estimate fat tails} and
\cref{lem:fourier box value} with $N=0$.
\end{proof}

% \begin{remark} \label{remark:Bounds}
%   If $ h $ satisfies the hypothesis of \cref{teo:estimate fat tails}, we have that there is $ r_0 $ such that, for $ r>r_0 $
%   $$ \left|\frac{1}{2\pi}\int_{-r}^{r} \hat{h}(u)e^{\ii uy}\dd u\right| \leq  \left|\frac{1}{2\pi}\int_{-\infty}^{\infty} \hat{h}(u)e^{\ii uy}\dd u\right| + \left|\frac{1}{2\pi}\int_{|u|\geq  r} \hat{h}(u)e^{\ii uy}\dd u\right|. $$
%   By \eqref{eq:est fourier inversion} and \cref{lem:fourier box value}, we have that
%   \begin{equation}
%   \label{eq:est truncated fourier inversion}
%     \left|\frac{1}{2\pi}\int_{-r}^{r} \hat{h}(u)e^{\ii uy}\dd u\right|\leq
%     \begin{cases}
%       C_Dy^{\gamma^+ - 1}e^{-by} + y^{-1}\varepsilon(r,y) ,\qquad y>  1,\\
%       C_D|y|^{\gamma^- -1}e^{-a|y|} + |y|^{-1}\varepsilon(r,y) ,\qquad y<  -1,
%     \end{cases}
% \end{equation}
%   where $ \varepsilon(r,y) $ is defined following Equation \eqref{eq:fourier box value} as 
%   \begin{equation}
%     \label{eq:def epsilon_r}
%     \varepsilon(r,y)=\frac{1}{\pi} \sum_{n=0}^{\infty}  \left| \Im\left[e^{-\ii yr} \hat{h}^{(n)}(-r)\right]\ii^{n} y^{-n}\right| =  \frac{1}{\pi}\left|\Im\left[e^{-\ii yr} \hat{h}(-r)\right]\right| + o(|y|^{-1}).
%   \end{equation}
% \end{remark}

In the following corollary, we apply \cref{cor: estimate fat tails truncated} to the Variance Gamma (VG) distribution. See \ref{sup:GH_NIG} for the Generalized Hyperbolic and Normal Inverse Gaussian.

\begin{corollary}
For the VG density (centered at $0$),
%\footnote{It is worth noting that for $0<\lambda<\frac{1}{2}$ this $\hat{f}\notin \mathbb{L}^1(\R)$ and it's not obvious how to invert $\hat{f}$. A possible set of assumptions is that $f$ is integrable, continuous, and with piecewise continuous derivative. If $0<\lambda <\frac{1}{2}$, and considered the PDF $f$, it is possible to show that $f$ has a singularity at $x=0$ that goes as $x^{2\lambda-1}$, and so it is integrable for all $\lambda>0$. However, this singularity is not square-integrable unless $\lambda>\frac{1}{4}$, in line with Plancherel's theorem. Accordingly, the VG is an example distribution with a blow-up close to zero, but for which the estimates \eqref{eq:est truncated fourier inversion} hold. There is no damping that can solve this problem. } 
%\footnote{The VG distribution can be obtained from the GH when $\delta \to 0$. Under this definition we have $a=\frac{1}{\sigma^2} \sqrt{\frac{2}{\nu}\sigma^2 + \theta^2}$, $b=\frac{\theta}{\sigma^2}$, $\gamma=\sqrt{\frac{2}{\nu \sigma^2}}$.}
\begin{equation*}
  %\label{eq:ft variance-gamma}
\hat{f}(u)=\left(1 +\ii\theta\nu u + \frac{1}{2}\sigma^2\nu u^2\right)^{-\lambda},\qquad \lambda=\frac{T}{\nu}>0.
\end{equation*}
%Notice that, the argument of $z^{-\lambda}$ is a second degree polynomial with $\Delta= -\theta^2\nu^2-2\sigma^2\nu <0$. Its first order coefficient is purely imaginary, while its zero term is real. Accordingly, we have that its roots are
The singularities are $u_1= \ii b$, $u_2=-\ii a$, where
$$a = \frac{\theta}{\sigma^2} + \sqrt{\frac{\theta^2}{\sigma^4}+\frac{2}{\sigma^2\nu}},\qquad b = -\frac{\theta}{\sigma^2} + \sqrt{\frac{\theta^2}{\sigma^4}+\frac{2}{\sigma^2\nu}}.$$
Therefore, on the real line, $\hat{f}(u)=[\frac12\sigma^2\nu(u-u_1)(u-u_2)]^{-\lambda}$, where the branch of $u^{-\lambda}$ is the usual one defined on $\C \setminus \{ u <0\}$. It is possible to select an extension of $\hat{f}$ defined in $\C\setminus \{\gamma_{u_1}, \gamma_{u_2}\}$ where the cuts are parallel to the imaginary axis. Considering such an extension, then $u_1$ and $u_2$ are algebraic singularities of order $\lambda$. Moreover, $\hat{f}$ decays uniformly at infinity. Therefore, by \cref{cor: estimate fat tails truncated}, we find $ C_+,C_->0 $ such that for all $y_k:=k/2^m\neq 0$, $r:=2^m\pi$
 \begin{equation*}
    |I_r[\hat{f}](y_k)|\leq
    \begin{cases}
      C_+ y_k^{\lambda-1}e^{-by_k}  +  \frac{1}{\pi y_k}\left|\Im[\hat{f}(2^m \pi)]\right| + O(y_k^{-2}),\quad y_k>  1,\\
      C_-|y_k|^{\lambda-1}e^{-a|y_k|}  + \frac{1}{\pi |y_k|}\left|\Im[\hat{f}(2^m \pi)]\right| + O(y_k^{-2}),\quad y_k<  -1.
    \end{cases}
\end{equation*}
\end{corollary}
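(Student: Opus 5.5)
The plan is to verify, for $\hat f(u)=(1+\ii\theta\nu u+\tfrac12\sigma^2\nu u^2)^{-\lambda}$, the hypotheses of \cref{cor: estimate fat tails truncated}: those of \cref{teo:estimate fat tails} with $N=2$, $H\equiv 0$, $\gamma^+=\gamma^-=\lambda$, and those of \cref{lem:fourier box value} with $N=0$ and $r=2^m\pi$. The bound then follows by evaluating at $y=y_k$.

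\emph{Factorization and the extension.} First factor the quadratic. The equation $\tfrac12\sigma^2\nu u^2+\ii\theta\nu u+1=0$ has roots $u=\ii\big(-\theta/\sigma^2\pm\sqrt{\theta^2/\sigma^4+2/(\sigma^2\nu)}\big)$, which are $\ii b$ and $-\ii a$ with $a,b>0$. Hence $1+\ii\theta\nu u+\tfrac12\sigma^2\nu u^2=\tfrac12\sigma^2\nu(u-\ii b)(u+\ii a)$.

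To define the extension, take the branch cuts $\gamma_{u_1}=\{\ii t:t\ge b\}$ and $\gamma_{u_2}=\{-\ii t: t\ge a\}$. Set
$$\hat f(u)=(\tfrac12\sigma^2\nu)^{-\lambda}(u-u_1)^{-\lambda}(u-u_2)^{-\lambda},$$
where each power uses a logarithm that is continuous on $\C$ minus its own cut. Normalize the arguments so that $\arg(u-u_1)+\arg(u-u_2)=0$ at $u=0$.

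On $\R$ the product $(u-\ii b)(u+\ii a)$ has positive real part $u^2+ab$, since $a,b>0$. So it never crosses the negative real axis, and the argument sum stays in $(-\pi,\pi)$ by continuity from $u=0$. This shows that the extension agrees on the real line with the principal-branch formula stated in the corollary. Each factor is of the form $g(u)(u-u_n)^{-\lambda}$ with $g$ analytic near $u_n$, so $u_1,u_2$ are algebraic branch points of order $\lambda$ with $\Im u_1=b$ and $\Im u_2=-a$. This gives assumption iii).

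\emph{Growth and decay conditions.} Assumption i) and the decay condition ii) follow from $|\hat f(u)|\le C|u-u_1|^{-\lambda}|u-u_2|^{-\lambda}$ away from the branch points. In particular $|\hat f(u)|\to 0$ uniformly as $|u|\to\infty$ along any ray $\gamma(\pm R,\theta)$, because the distance from those rays to both branch points grows like $R\sin\theta$. Integrability and piecewise $C^1$ regularity of the VG density are standard.

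For \cref{lem:fourier box value} with $N=0$, we need $\hat f\to0$ at infinity and $\hat f'\in L^1(|u|>r)$. Both hold because $\hat f'(u)=-\lambda\hat f(u)\big[(u-u_1)^{-1}+(u-u_2)^{-1}\big]=O(|u|^{-2\lambda-1})$ on $\R$, which is integrable for every $\lambda>0$.

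\emph{Main obstacle.} The delicate step is the branch bookkeeping: confirming that the vertical-cut extension coincides on $\R$ with the principal-branch $(\cdot)^{-\lambda}$ (handled by the argument-sum continuity above), and that no hidden phase factor changes the order. A second point is that for $\lambda<\tfrac12$ the density is not in $L^2$, so the proof of \cref{teo:estimate fat tails} must rely only on the $L^1$ inversion and Fourier integrals understood as limits. I would simply note that the theorem and the lemma are stated for $h\in L^1$. This requires $I_\infty$ to be interpreted as the improper limit $\lim_{R\to\infty}I_R$, which converges for $y\neq0$ by the integration-by-parts argument of \cref{lem:fourier box value}.

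Once these are settled, \cref{cor: estimate fat tails truncated} with $r=2^m\pi$ and $y=y_k$ gives the displayed estimate. At $y=y_k$ the phase factor satisfies $e^{\ii r y_k}=e^{\ii\pi k}=\pm1$, so $\left|\Im[e^{\ii r y_k}\hat f(r)]\right|=|\Im\hat f(2^m\pi)|$, which is exactly the constant appearing in the statement.
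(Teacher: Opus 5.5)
Your proposal is correct and follows the paper's route. You locate the branch points $u_1=\ii b$, $u_2=-\ii a$, take vertical cuts so both are algebraic of order $\lambda$, and use $|\hat f(u)|=(\sigma^2\nu/2)^{-\lambda}|u-u_1|^{-\lambda}|u-u_2|^{-\lambda}$ for the decay hypotheses. You then apply \cref{cor: estimate fat tails truncated} with $r=2^m\pi$, $y=y_k$ (where $e^{\ii r y_k}=(-1)^k$), and your argument-continuity check on $\R$ supplies a detail the paper leaves implicit.

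Two small touch-ups:
\begin{itemize}
\item $f\notin L^2(\R)$ only for $\lambda\le\frac14$. What fails for $\lambda\le\frac12$ is $\hat f\in L^1(\R)$, and that is the actual reason $I_\infty$ must be read as an improper limit.
\item The stated $O(y_k^{-2})$ remainder really requires \cref{lem:fourier box value} with $N=1$ rather than $N=0$; with $N=0$ the remainder is only $o(|y_k|^{-1})$. The $N=1$ hypotheses hold here since $\hat f''=O(|u|^{-2\lambda-2})$.
\end{itemize}
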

\begin{remark}
	Experimentally, we can estimate  $C_+\approx D^{\alpha}_{m,k^*}/(y^{\gamma^{+}-1}e^{-(b-\alpha)y})$,
where 
	$ y= \frac{(\gamma^{+}-1)+ \sqrt{\gamma^{+}-1}}{b-\alpha} $ and 
$k^*/2^{m} - \mu \approx  \frac{(\gamma^{+}-1)+ \sqrt{\gamma^{+}-1}}{b-\alpha}$. This $ k^* $ and this value of $y$ identify the inflection point of $y^{\gamma^{+}-1}e^{-(b-\alpha)y}$. In case $ \gamma^{+}<1 $, then $C_+$ can be estimated using $y=1$ and $ k^*/2^{m} - \mu \approx 1$. A different constant should be estimated similarly for the left-tail, as the distribution could be asymmetric.
\end{remark}	
	\begin{corollary}\label{cor:SWIFT density bound}
		Let $f_0$ be a density function satisfying the assumptions in \cref{teo:estimate fat tails} and \cref{lem:fourier box value}. Suppose that $\hat{f}_0$ has nearest singularities with imaginary parts equal to $b>0$ and $-a<0$, and maximal orders $\gamma^+$ and $\gamma^-$.  Given $-a<\alpha<b$, $\mu \in \R$,  $y_k:=k/2^m$, $y_{k,\mu}:=y_k-\mu$, the Shannon coefficients for the damped-centered density $f_\alpha(y)=e^{\alpha y}f(y-\mu)$ verify
\begin{equation}
\label{eq:density_bound}
|D^\alpha_{m,k}|
\leq 2^{-m/2} \left\{
\begin{aligned}
& C_+\, e^{\alpha\mu} (y_{k,\mu})^{\gamma^+ -1}
e^{-(b-\alpha)y_{k,\mu}} + \varepsilon_D(m)y_{k,\mu}^{-1}, \quad y_{k,\mu} > 1,\\
%\quad + \varepsilon_D(m)\left|y_{k,\mu}\right|^{-1} + O\left(y_{k,\mu}^{-2}\right),
&C_-\, e^{\alpha\mu}\left|y_{k,\mu}\right|^{\gamma^- -1}
e^{-(a+\alpha)\left|y_{k,\mu}\right|} + \varepsilon_D(m)\left|y_{k,\mu}\right|^{-1}, \quad y_{k,\mu} < -1,
%& y_{k,\mu} < -1. \\
%\quad + \varepsilon_D(m)\left|y_{k,\mu}\right|^{-1}+O\left(y_{k,\mu}^{-2}\right),
\end{aligned}
\right.
\end{equation}
with $\varepsilon_D(m)=\frac{1}{\pi}|\Im[\hat{f}_\alpha(2^m\pi)]|=\frac{1}{\pi}|\Im[\hat{f}(2^m\pi+\ii\alpha)]|$. This estimation is precise up to an additional $O(y_{k,\mu}^{-2})$ term.
\end{corollary}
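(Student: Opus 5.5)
The plan is to reduce the statement to \cref{cor: estimate fat tails truncated} applied to the damped, shifted function, and then translate back to the Shannon coefficients through the identity $D^\alpha_{m,k}=2^{-m/2}I_r[\hat f_\alpha](y_k)$ with $r=2^m\pi$. Write $f(y)=f_0(y-\mu)$, so that $f_\alpha(y)=e^{\alpha y}f_0(y-\mu)=e^{\alpha\mu}g(y-\mu)$ with $g(z):=e^{\alpha z}f_0(z)$. The first step is the Fourier bookkeeping. Since $\hat g(u)=\hat f_0(u+\ii\alpha)$, and translating by $\mu$ multiplies the transform by $e^{-\ii u\mu}$, we get
\begin{equation*}
I_r[\hat f_\alpha](y)=e^{\alpha\mu}\,\frac{1}{2\pi}\int_{-r}^{r}\hat g(u)e^{\ii u(y-\mu)}\dd u=e^{\alpha\mu}I_r[\hat g](y-\mu).
\end{equation*}
Evaluating at $y=y_k$ produces the argument $y_{k,\mu}$ and the prefactor $e^{\alpha\mu}$ appearing in \eqref{eq:density_bound}.

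The second step is to check that $g$ satisfies the hypotheses of \cref{teo:estimate fat tails}, with singularities shifted. The singularities of $\hat g(u)=\hat f_0(u+\ii\alpha)$ are at $u_n-\ii\alpha$, and their orders and local algebraic/logarithmic structure are unchanged. Because $-a<\alpha<b$, the nearest upper singularity now has imaginary part $b-\alpha>0$ and the nearest lower one $-(a+\alpha)<0$. Hence condition iii) holds with $(b,a)$ replaced by $(b-\alpha,a+\alpha)$ and the same $\gamma^\pm$. The branch cuts stay vertical half-lines, since a vertical translation of a vertical ray is a vertical ray. The polynomial growth i) and the decay along the rays ii) are inherited, because a horizontal line $\Im u=\alpha$ lies in the strip where $\hat f_0$ is analytic and the rays $\gamma(R,\theta)$ are merely translated by $-\ii\alpha$. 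Integrability and piecewise $C^1$ of $g$ follow from $\alpha$ lying in the admissible strip, which the singularity-free strip $-a<\Im<b$ guarantees. The hypotheses of \cref{lem:fourier box value} for $g$ at $r=2^m\pi$ are assumed in the statement, or obtained via \cref{lem:four box assumption with strip density}.

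The third step applies \cref{cor: estimate fat tails truncated} to $g$ with $y=y_{k,\mu}$. For $y_{k,\mu}>1$ this yields
\begin{equation*}
|I_r[\hat g](y_{k,\mu})|\le C_+y_{k,\mu}^{\gamma^+-1}e^{-(b-\alpha)y_{k,\mu}}+\frac{1}{\pi y_{k,\mu}}\bigl|\Im[e^{\ii ry_{k,\mu}}\hat g(r)]\bigr|+O(y_{k,\mu}^{-2}),
\end{equation*}
and the case $y_{k,\mu}<-1$ is symmetric. Multiplying by $e^{\alpha\mu}2^{-m/2}$ gives the exponential part of \eqref{eq:density_bound}.

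The main obstacle is matching the $1/y$ boundary term to $\varepsilon_D(m)=\frac1\pi|\Im[\hat f(2^m\pi+\ii\alpha)]|$. I would not apply the corollary to $g$ for this term. Instead I would apply \cref{lem:fourier box value} with $N=0$ directly to $f_\alpha$ at $y=y_k$, where $e^{\ii ry_k}=e^{\ii\pi k}=\pm1$, so the term becomes $\frac{1}{\pi|y_k|}|\Im\hat f_\alpha(2^m\pi)|$, exactly $\varepsilon_D(m)|y_k|^{-1}$. In other words, I split $I_r[\hat f_\alpha](y_k)=I_\infty[\hat f_\alpha](y_k)-\frac1{2\pi}\int_{|u|\ge r}\hat f_\alpha(u)e^{\ii uy_k}\dd u$. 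I bound the first piece by $e^{\alpha\mu}I_\infty[\hat g](y_{k,\mu})$ via \cref{teo:estimate fat tails}, and the second by the lemma at $y_k$. Finally, $|y_k|^{-1}$ and $|y_{k,\mu}|^{-1}$ differ by $O(y_{k,\mu}^{-2})$ for fixed $\mu$, which justifies the claim that the bound holds up to an additional $O(y_{k,\mu}^{-2})$.
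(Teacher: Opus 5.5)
Your proposal is correct and follows essentially the paper's proof. You write $\hat f_\alpha(u)=e^{\alpha\mu}\hat f_0(u+\ii\alpha)e^{-\ii\mu u}$, obtain the exponential term from \cref{teo:estimate fat tails} applied to $\hat f_0(\cdot+\ii\alpha)$ at $y=y_{k,\mu}$ (branch points shifted by $-\ii\alpha$, orders unchanged), and obtain the $1/y$ term from \cref{lem:fourier box value} applied to $\hat f_\alpha$. One simplification is available: your ``main obstacle'' is not one, because $e^{\ii r y_k}=(-1)^k$ gives $e^{\alpha\mu}\Im[e^{\ii r y_{k,\mu}}\hat g(r)]=(-1)^k\Im[\hat f_\alpha(r)]$. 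Hence \cref{cor: estimate fat tails truncated} applied to your $g$ already yields exactly $\varepsilon_D(m)|y_{k,\mu}|^{-1}$, with no need to compare $|y_k|^{-1}$ and $|y_{k,\mu}|^{-1}$, and this also avoids the case $k=0$, where the lemma at $y_k$ does not apply.
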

\begin{proof}
By the translation property of the FT, $\hat{f}_0(y-\mu)=\hat{f}_0(u) e^{-\ii \mu u}$. Since exponential damping leads to an evaluation at $u+ \ii \alpha$, we have
\begin{equation}
\label{eq:proof cor 3.12, dampped centered TF}
\hat{f}_\alpha(u)=e^{\alpha\mu}\hat{f}_0(u+
			\ii\alpha)e^{-\ii\mu u}.
\end{equation}
Then, any algebraic (logarithmic) branch point for $\hat{f_\alpha}$ is equal to
$u_1 - \ii\alpha , \ldots, u_N- \ii\alpha$
where $u_1,\ldots, u_N$ are the branch points of $\hat{f}_0$, and their orders do not vary. Notice that
$$I_r[\hat{f}_\alpha](y)=\frac{e^{\alpha \mu}}{2\pi}\int_{-r}^{r} \hat{f_0}(u+\ii\alpha) e^{\ii (y-\mu) u }\dd u.$$
Since $D^\alpha_{m,k} = 2^{-m/2}\,I_r[\hat{f_\alpha}](y_k)$, the first summand in \eqref{eq:density_bound} follows from \eqref{eq:proof cor 3.12, dampped centered TF} and \cref{teo:estimate fat tails} applied to $\hat{f}_0(u+\ii \alpha)$ with $y=y_{k,\mu}$ and $r=2^m \pi$, whereas the second is a direct consequence of \cref{lem:fourier box value} applied to $\hat{f}_\alpha$.
\end{proof}

The following result follows from our set of assumptions and \cref{lem:fourier box value}.
\begin{corollary}%[Damped Payoff]
\label{cor:SWIFT payoff bound}
    Suppose that the payoff $P$ is piecewise $C^1$, and that $P_\alpha$ satisfies the hypothesis of \cref{lem:fourier box value}. Then if $k\neq 0$ and $y_k$ is not a jump point for $P$, up to a negligible $O(y_k^{-2})$ term, one has
\begin{equation}
\label{eq:general_payoff_bound weak}
        |V^\alpha_{m,k}| \leq  2^{-m/2} \left(  |P_{\alpha}(y_k)|+\varepsilon_V(m) |y_k|^{-1} \right),
\end{equation}
with $\varepsilon_V(m)=\frac{1}{\pi}|\Im[\hat{P}_\alpha(2^m \pi )]|=\frac{1}{\pi}|\Im[\hat{P}(2^m \pi -\ii\alpha)]|$.
%Moreover, if $\hat{P}_\alpha \in L^1(\R)$, we also have the uniform bound
%\begin{equation}
%\label{eq:general_payoff_bound}
%        |V^\alpha_{m,k}| \leq  2^{-m/2} \left(  |P_{\alpha}(y_k)| + \zeta_k \right),\qquad \zeta_k := \min\left(\epsilon_r(m),  \varepsilon_V(m) |y_k|^{-1} \right)
%\end{equation}
%where $ \epsilon_r(m):=\frac{1}{2\pi}\int_{|u|\geq 2^m\pi} |\hat{P}_\alpha(u)|\dd u$.
\end{corollary}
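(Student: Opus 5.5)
We use the identity $V^\alpha_{m,k}=2^{-m/2}I_r[\hat P_\alpha](y_k)$ with $r=2^m\pi$ and $y_k=k/2^m$. The plan is to split the truncated inverse transform into the full inversion minus the tail:
\[
I_r[\hat P_\alpha](y_k)=\frac{1}{2\pi}\,\mathrm{p.v.}\!\int_{\R}\hat P_\alpha(u)e^{\ii u y_k}\dd u-\frac{1}{2\pi}\int_{|u|\ge r}\hat P_\alpha(u)e^{\ii u y_k}\dd u .
\]
The second term is handled directly by \cref{lem:fourier box value} with $N=0$, whose hypotheses on $P_\alpha$ are assumed. It gives
\[
\left|\frac{1}{2\pi}\int_{|u|\ge r}\hat P_\alpha(u)e^{\ii u y_k}\dd u\right|\le \frac{1}{\pi|y_k|}\left|\Im\bigl[e^{\ii r y_k}\hat P_\alpha(r)\bigr]\right|+\mathcal R_0(r,y_k).
\]
Since $ry_k=\pi k$, we have $e^{\ii r y_k}=(-1)^k$, so the first summand equals $\varepsilon_V(m)|y_k|^{-1}$. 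The remainder $\mathcal R_0$ is $O(y_k^{-1})$ times $\int_{|u|\ge r}|\hat P_\alpha'|$, and is the term we discard as negligible. To obtain the genuine $O(y_k^{-2})$ stated, we could instead apply the lemma with $N=1$ when more regularity is available. Otherwise we simply follow the paper's convention of treating the subsequent terms of the expansion as higher order.

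For the first term, we identify the (principal value) improper integral $\lim_{R\to\infty}I_R[\hat P_\alpha](y_k)$ with $P_\alpha(y_k)$. The tool is the Dirichlet–Jordan pointwise Fourier inversion theorem. By \cref{ass: f and P regularity. P jumps}, $P_\alpha\in L^1$ is piecewise $C^1$ with finitely many critical points, hence locally of bounded variation. Therefore $I_R[\hat P_\alpha](y)\to \tfrac12\bigl(P_\alpha(y^+)+P_\alpha(y^-)\bigr)$ for every $y$. Because $y_k$ is not a jump point, this limit equals $P_\alpha(y_k)$. The existence of the limit is also consistent with \cref{lem:fourier box value}, since the tail integral over $|u|\ge R$ is $O(1/(R|y|))\to 0$ for $y\neq0$. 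This is where $k\neq0$ is used.

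Combining the two pieces with the triangle inequality and multiplying by $2^{-m/2}$ gives \eqref{eq:general_payoff_bound weak}. The main obstacle is the inversion step. $\hat P_\alpha$ need not be in $L^1$, for instance for digital payoffs, so the classical $L^1$ inversion theorem cannot be used. We must therefore rely on the Dirichlet–Jordan (localization) theorem for $L^1$ functions of locally bounded variation. The first thing to check is that the piecewise $C^1$ assumption, together with finitely many critical points, gives local bounded variation near $y_k$; it does, since $P_\alpha$ is then piecewise monotone.
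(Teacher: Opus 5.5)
Your proposal is correct and follows the paper's proof. You split $I_r[\hat P_\alpha](y_k)$ into the full pointwise inversion, which equals $P_\alpha(y_k)$ at a non-jump point by the inversion theorem for piecewise $C^1$ (locally BV) functions, minus the frequency tail. You then bound that tail via \cref{lem:fourier box value} with $N=0$, using $e^{\ii r y_k}=(-1)^k$. Your side remark is also accurate and is a point the paper glosses over: with $N=0$ the remainder is only $O(|y_k|^{-1})$ (in fact $o(|y_k|^{-1})$ by Riemann--Lebesgue), so a genuine $O(y_k^{-2})$ term needs $N=1$.
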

\begin{proof}
Given that $P_\alpha\in L^1(\R)$ is piecewise $C^1$ and $y_k$ is not a jump point, by the Fourier inversion formula we obtain that
\begin{equation*}
    |V^\alpha_{m,k}| \leq 2^{-m/2}\left( |P_\alpha(y_k)| + \left| \frac{1}{2\pi}\int_{|u|\geq 2^m\pi} \hat{P}_\alpha(u)e^{\ii u k /2^m}\dd u \right|\right).
\end{equation*}
Therefore, the bound \eqref{eq:general_payoff_bound weak} immediately follows from \cref{lem:fourier box value}.
%, and if $\hat{P}_\alpha \in L^1(\R)$ we also have the uniform bound
%   $$\left| \frac{1}{2\pi}\int_{|u|\geq 2^m\pi} \hat{P}_\alpha(u)e^{\ii u k /2^m}\dd u \right| \leq \frac{1}{2\pi}\int_{|u|\geq 2^m\pi} |\hat{P}_\alpha(u)|\dd u ,  $$
%and this implies \eqref{eq:general_payoff_bound}.
\end{proof}

\section{Error decomposition and parameter selection}\label{Error Decomposition}
In this section, we aim to study the error of the damped SWIFT method. In our setting, as Assumptions \ref{ass: f and P regularity. P jumps} and \ref{ass:noempty strip intersection} hold, the exact European option value $v$ can be approximated by the coefficient representation of order $m$, denoted as $v_m$, and defined in Equation \eqref{eq:v_m(x)}.
%defined as the inner product between the coefficient representation of the damped density and that of the damped payoff projected onto the Shannon wavelet space of order $m$. 

The damped SWIFT approximation is then obtained in two steps. First, the representation is truncated to a finite set $\Lambda=[l,u]\cap\Z$, yielding $v_{m,\Lambda}$ and defined in Equation \eqref{eq:v_m,Lambda(x)}. Then, the resulting coefficients are approximated by numerical quadrature, yielding $v_{m,\Lambda,Q}$, as in Equation \eqref{eq:v_m,Lambda,Q(x)}.  %Consequently, the total approximation error naturally decomposes into three contributions: the projection error , the truncation error , and the quadrature error, arising from $v-v_m$, $v_m-v_{m,\Lambda}$, and $v_{m,\Lambda} - v_{m,\Lambda,Q} $, respectively.
%Throughout this section, we use the approximations introduced in \cref{sec:DSWIFT}, namely $v_m$, $v_{m,\Lambda}$ and $v_{m,\Lambda,Q}$, with $\Lambda = [l,u]\cap\Z$, $l<u$. The coefficient representation of \cref{sec:DSWIFT} only requires the admissible damping condition.  Some of the estimates below require additional assumptions, which are stated locally.  In particular, the projection-error bound uses $\hat{f}_\alpha(\cdot\mid x)\in L^1(\R)$, while some payoff-coefficient tail estimates use $\hat{P}_\alpha\in L^1(\R)$.  These stronger Fourier-side assumptions are not imposed globally, since discontinuous payoffs such as digital options may fail to satisfy $\hat{P}_\alpha\in L^1(\R)$.
By the triangular inequality, we obtain that the error of the method can be naturally decomposed as
 \begin{align} \label{eq:error_decompositon}
e^{r_0T}|v(x)-v_{m,\Lambda,Q}(x)|
\le &\, E_{\rm proj}(m,\alpha)
   +E_{\rm trunc}(m,\Lambda,\alpha) + E_{{\rm{quad}},}(m,\Lambda,Q,\alpha),
   %\\
  % &E_{{\rm{quad}},D}(m,\Lambda,Q,\alpha)  +E_{{\rm{quad}},V}(m,\Lambda,Q,\alpha),\nonumber
 \end{align}
where
 \begin{equation}
 \label{eq:error_decompositon definition}
\begin{gathered}
      E_{\rm proj}(m,\alpha)
:= e^{r_0 T}| v(x)- v_m(x)|, \qquad E_{\rm trunc}(m,\Lambda,\alpha),
:= e^{r_0 T}|v_m(x) - v_{m,\Lambda}(x)| \\ E_{{\rm quad}}(m,\Lambda,Q,\alpha)
:= e^{r_0 T} |v_{m,\Lambda}(x) - v_{m,\Lambda ,Q}(x)|.
     \end{gathered}
 \end{equation}
%\begin{align*}
%E_{\rm proj}(m,\alpha)
%&:=
%\left|
%\int_\R P_\alpha(y)
%\left(f_\alpha(y\mid x)-\sum_{k\in\Z}D^\alpha_{m,k}(x)\varphi_{m,k}(y)\right)\dd y
%\right|,\\
%E_{\rm trunc}(m,\Lambda,\alpha)
%&:=
%\left|
%\sum_{k\in\Z\setminus\Lambda}D^\alpha_{m,k}(x)V^\alpha_{m,k}
%\right|,\\
%E_{{\rm quad},D}(m,\Lambda,Q,\alpha)
%&:=
%\left|
%\sum_{k\in\Lambda}V^\alpha_{m,k}
%\big(D^\alpha_{m,k}(x)-D^{\alpha,Q}_{m,k}(x)\big)
%\right|,\\
%E_{{\rm quad},V}(m,\Lambda,Q,\alpha)
%&:=
%\left|
%\sum_{k\in\Lambda}D^{\alpha,Q}_{m,k}(x)
%\big(V^\alpha_{m,k}-V^{\alpha,Q}_{m,k}\big)
%\right|.
%\end{align*}
%and notice that this decomposition separates projection, translation-index truncation, and coef\-fi\-cient-quadrature errors. 
The aim of this section is to provide bounds for each component. To achieve this, we require an additional assumption.
\begin{assumption}
\label{ass:error analysis}
    In this section, we assume that $\hat{f}_\alpha \in L^1(\R)$ in order to bound the projection error.
    %We also assume that $\hat{P}_\alpha\in L^1(\R)$ to bound the payoff coefficients $V^\alpha_{m,k}$.
\end{assumption}
%The error analysis in \cite{OrtizOosterlee2016,colldeforns2017two} assumes the boundedness of the payoff, which we do not assume here. This comes with the advantage that our approach allows the truncation range to be informed by the behavior of the payoff coefficients as well (as explained in \cref{sec:truncError}), rather than solely by the density coefficients, as in the seminal works. 
%The numerical illustrations in Section \ref{sec:NUmerical experiments} further support this. 
%On the other hand, these articles incur an additional domain truncation error, as the computations for the density and payoff coefficients are performed in the physical space.

\subsection{Projection error} 
\Cref{ass:error analysis} for $\hat{f}_\alpha$ can be applied to obtain the following bound.
%for the projection error.
\begin{proposition}
%[Projection error]
\label{prop:projection-error}
Assume that $P_\alpha\in L^1(\R)$ and $\hat{f}_\alpha(\cdot\mid x)\in L^1(\R)$.  Then
\begin{equation}\label{eq:projection-error-bound}
E_{\rm proj}(m,\alpha)
\le
\|P_\alpha\|_{L^1(\R)}
\frac1{2\pi}
\int_{|w|>2^m\pi}|\hat{f}_\alpha(u \mid x)|\,\dd u.
\end{equation}
\end{proposition}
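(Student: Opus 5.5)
The approach is to write the projection error as the pairing of $P_\alpha$ with the physical-space residual $f_\alpha - f_{1,\alpha}$, and then bound that residual pointwise in sup norm via its Fourier representation. By definition,
\begin{equation*}
e^{r_0T}\bigl(v(x)-v_m(x)\bigr)=\int_\R P_\alpha(y)\bigl(f_\alpha(y\mid x)-f_{1,\alpha}(y\mid x)\bigr)\dd y .
\end{equation*}
Hence Hölder's inequality gives
\begin{equation*}
E_{\rm proj}(m,\alpha)\le \|P_\alpha\|_{L^1(\R)}\,\|f_\alpha-f_{1,\alpha}\|_{L^\infty(\R)}.
\end{equation*}
It therefore suffices to show that
\begin{equation*}
\|f_\alpha-f_{1,\alpha}\|_\infty\le \frac{1}{2\pi}\int_{|u|>2^m\pi}|\hat f_\alpha(u\mid x)|\dd u .
\end{equation*}

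The key step is to identify $f_{1,\alpha}$. Since $\{\phi_{m,k}\}_k$ is an orthonormal basis of the Paley--Wiener space of $L^2$ functions whose Fourier transform is supported in $[-2^m\pi,2^m\pi]$, and $f_\alpha\in L^2(\R)$ by \cref{ass:noempty strip intersection}, the series \eqref{eq:density-proj} is the orthogonal projection of $f_\alpha$ onto that space. Its Fourier transform is therefore $\hat f_\alpha\,\mathbf 1_{[-2^m\pi,2^m\pi]}$. Because this function lies in $L^1\cap L^2$, the projection is continuous and equals $I_{2^m\pi}[\hat f_\alpha](y)$ for every $y$. Equivalently, one may use \eqref{eq:D-fourier-real} and sum the Fourier series $\sum_k e^{\ii k u/2^m}\phi_{m,k}(y)$ on $[-2^m\pi,2^m\pi]$.

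Next, \cref{ass:error analysis} gives $\hat f_\alpha\in L^1(\R)$, so the Fourier inversion formula holds pointwise:
\begin{equation*}
f_\alpha(y\mid x)=\frac{1}{2\pi}\int_\R\hat f_\alpha(u\mid x)e^{\ii uy}\dd u .
\end{equation*}
Equality holds at every point, after choosing the continuous representative, which is harmless inside the integral against $P_\alpha$. Subtracting the two representations yields
\begin{equation*}
f_\alpha(y\mid x)-f_{1,\alpha}(y\mid x)=\frac{1}{2\pi}\int_{|u|>2^m\pi}\hat f_\alpha(u\mid x)e^{\ii uy}\dd u .
\end{equation*}
Bounding the modulus of the integrand by $|\hat f_\alpha(u\mid x)|$ gives the required sup-norm estimate, uniformly in $y$.

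The main obstacle is justifying the interchange $\int P_\alpha f_{1,\alpha}=\sum_k D^\alpha_{m,k}V^\alpha_{m,k}$ and confirming that the $L^2$-convergent series defining $f_{1,\alpha}$ coincides pointwise with the continuous truncated inverse transform. For the first point, the pairing is an $L^2$ inner product, so Parseval in $L^2$ handles it, using $P_\alpha\in L^2$. For the second, an $L^2$ limit agrees almost everywhere with the continuous function $I_{2^m\pi}[\hat f_\alpha]$, and an almost-everywhere identification suffices for the integral against $P_\alpha\in L^1$.
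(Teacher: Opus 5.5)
Your proof is correct and follows the paper's route. You identify $f_\alpha-f_{1,\alpha}$ with the high-frequency tail $\frac{1}{2\pi}\int_{|u|>2^m\pi}\hat f_\alpha(u\mid x)e^{\ii uy}\dd u$, bound its pairing with $P_\alpha$ by $\|P_\alpha\|_{L^1(\R)}$ times the $L^1$ norm of that tail, and supply the justifications (Paley--Wiener projection, pointwise inversion under $\hat f_\alpha\in L^1(\R)$, the $L^1$--$L^\infty$ pairing) that the paper compresses into one line.
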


\begin{proof}
Under the stated Fourier-inversion assumptions,
\[
f_\alpha(y\mid x)-\sum_{k\in\Z}D^\alpha_{m,k}(x)\phi_{m,k}(y)
=
\frac1{2\pi}\int_{|u|>2^m\pi}\hat{f}_\alpha(u\mid x)e^{\ii u y}\,\dd u.
\]
Using this representation in $E_{\rm proj}(m,\alpha)$, and taking absolute values, gives \eqref{eq:projection-error-bound}.
\end{proof}

The bound \eqref{eq:projection-error-bound} can be used to select the resolution level $m$.  %In practice one may also use cheaper diagnostics based on the magnitude of the characteristic function at the endpoints $\pm 2^m\pi$, but such diagnostics should be interpreted as heuristics rather than rigorous upper bounds for the full Fourier tail.

\subsection{Truncation Error} \label{sec:truncError}
We now study the error component $E_{\rm trunc}(m,\Lambda,\alpha)$ due to the truncation of the series in \eqref{eq:v_m(x)} to a sum where the translation parameter $k\in \Lambda =[l,u]\cap \Z$, with $l<u$. 
We denote the product coefficient as $\mathcal{P}_{m,k}^{\alpha}(x) := D^{\alpha}_{m,k}(x)\cdot V^{\alpha}_{m,k}$ and notice that
\begin{equation}\label{eq:trunc_error}
      E_{\rm trunc}(m,\Lambda,\alpha)
      %= \Bigl|\sum_{k \notin [l,u]} D^{\alpha}_{m,k}\,V^{\alpha}_{m,k}\Bigr| 
      \leq \sum_{k < l} |\mathcal{P}_{m,k}^{\alpha}| + \sum_{k > u} |\mathcal{P}_{m,k}^{\alpha}| := \bar{T}^{-}(l) + \bar{T}^{+}(u).
\end{equation}
The results given in the following \cref{teo:precise truncation error bound} constitute quite precise bounds that have been applied in \cref{sec:numREsults}, while \cref{cor: estimate fat tails truncated} clearly shows that the truncation error decays at first exponentially, and then algebraically as $|u|$ and $|l|$ tend to infinity.
In the following, we apply the results in \cref{sec:Asymptotic decay of the payoff and density coefficients with respect to the translation parameters} along with the following lemma.

\begin{lemma}
%[Tail-sum bound for piecewise-monotone envelopes]
\label{lem:tail-sum-envelope}
Let $G:(0,\infty)\to[0,\infty)$ be integrable, piecewise monotone, and with finitely many local maxima $y_1^*,\ldots,y_N^*$. Then, for $U>0$ we have
\begin{equation}\label{eq:tail-sum-envelope}
2^{-m}\sum_{\frac{k}{2^{m}}>U}G(k/2^m)
\le
\int_U^\infty G(y)\dd y
+2^{1-m}\sum_{n=1}^N G(y_n^*)\mathbf 1_{\{U<y_n^*\}}.
\end{equation}
\end{lemma}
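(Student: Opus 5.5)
The plan is to compare the Riemann sum with step $h:=2^{-m}$ against the integral, one monotone piece at a time. Since $G$ is piecewise monotone with finitely many local maxima, we can partition $(U,\infty)$ into finitely many consecutive intervals on each of which $G$ is monotone. Each local maximum $y_n^*$ with $U<y_n^*$ separates an increasing stretch (ending at $y_n^*$) from a decreasing stretch (starting at $y_n^*$). So $(U,\infty)$ splits into alternating increasing and decreasing pieces, and the boundaries between a decreasing piece and a following increasing piece are local minima.

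Write $k_0$ for the first index with $hk_0>U$, and associate to each grid point $y_k=hk$ the cell $[y_k-h,y_k]$ or $[y_k,y_k+h]$ as follows.
\begin{itemize}
\item On a decreasing piece, $hG(y_k)\le\int_{y_k-h}^{y_k}G$. This holds provided the cell lies inside the piece, i.e.\ $y_k-h$ is not before the piece's left endpoint.
\item On an increasing piece, $hG(y_k)\le\int_{y_k}^{y_k+h}G$. This holds provided $y_k+h$ does not pass the right endpoint, which is a local maximum $y_n^*$.
\end{itemize}
The cells so chosen within a single monotone piece are pairwise disjoint and contained in that piece. The only grid points not covered are a bounded number of boundary points per piece:
\begin{itemize}
\item the last grid point of an increasing piece, within distance $h$ before $y_n^*$;
\item the first grid point of a decreasing piece, within distance $h$ after $y_n^*$;
\item on the very first piece, the point $y_{k_0}$, whose cell $[y_{k_0}-h,y_{k_0}]$ may stick out below $U$.
\end{itemize}
Each exceptional term satisfies $hG(y_k)\le hG(y_n^*)$, because it lies in a monotone stretch adjacent to $y_n^*$ on the side where $G$ is bounded by its value at the maximum. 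For each local maximum $y_n^*>U$ there are at most two such exceptional points (one on each side), which produces exactly the term $2^{1-m}G(y_n^*)\mathbf 1_{\{U<y_n^*\}}$.

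The boundary points near local minima cause no trouble. There, a decreasing piece is followed by an increasing one: decreasing-piece points use left cells, increasing-piece points use right cells, so the cells never cross the minimum and remain disjoint. For the first piece, if $G$ is increasing on $(U,\cdot)$, the point $y_{k_0}$ uses its right cell $[y_{k_0},y_{k_0}+h]\subset(U,\infty)$ and causes no problem.

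The main obstacle is the case where $G$ is decreasing at $U$ with no maximum in $(U,\cdot)$ near the start. There the left cell of $y_{k_0}$ may poke below $U$, yet the right-hand side carries no term to absorb this. To handle it, use that on a decreasing stretch the sum starting from $y_{k_0}$ can instead be compared with the right cells shifted: $hG(y_{k})\le\int_{y_{k}-h}^{y_{k}}G$, where for $k=k_0$ the interval $[y_{k_0}-h,U]$ is dropped at a cost. If this fails, I would argue via the convention that the supremum near $U$ counts as a local maximum (treating $U$ as an endpoint maximum when $G$ decreases there). Alternatively one can accept that the statement's sum is over $k/2^m>U$ strictly, with points within distance $h$ charged to the nearest maximum.

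Finally, integrability of $G$ guarantees that the summed integrals are bounded by $\int_U^\infty G$, since the chosen cells are disjoint subsets of $(U,\infty)$. Summing over all pieces then yields the claimed inequality after multiplying through by $h=2^{-m}$.
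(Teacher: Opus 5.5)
You have correctly located the weak point, but none of your three fallbacks closes it, and it cannot be closed. When $G$ is decreasing immediately to the right of $U$, the inequality as stated is false.

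Your three fallbacks each fail for a specific reason:
\begin{itemize}
\item Dropping $\int_{y_{k_0}-2^{-m}}^{U}G$ leaves a deficit of up to about $2^{-m}G(U^+)$, and nothing on the right-hand side pays for it.
\item Declaring $U$ a local maximum changes the statement, since the $y_n^*$ are the local maxima of $G$ on $(0,\infty)$ and are fixed independently of $U$.
\item Charging $y_{k_0}$ to the nearest maximum is impossible when all maxima lie below $U$ (or there are none), because the indicator $\mathbf 1_{\{U<y_n^*\}}$ discards them.
\end{itemize}
A concrete counterexample: take $G(y)=e^{-y}$, which has no local maxima on $(0,\infty)$, so the last sum is empty. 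Take $m=0$ and $U=1-\varepsilon$. The left-hand side is $\sum_{k\ge1}e^{-k}=1/(e-1)\approx0.58$, while the right-hand side is $e^{-1+\varepsilon}\to e^{-1}\approx0.37$. The same failure occurs for $G(y)=ye^{-y}$ with $U=10-\varepsilon$: the left side is $\approx16.7\,e^{-10}$ and the right side tends to $11\,e^{-10}$. This is exactly the regime $U>y_N^*$ used in the corollary following Theorem~\ref{teo:precise truncation error bound}.

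The paper's proof has the same hole. It applies Ostrowski's inequality for functions of bounded variation and then bounds $\mathbf V(G)\le 2\sum_{U<y_n^*}G(y_n^*)$. That variation formula omits the contribution $G(U^+)-G(y_{0,*})$ of a decreasing piece starting at $U$, where $y_{0,*}$ is the first local minimum after $U$ (or $+\infty$, where $G$ vanishes). Moreover, the sum over $k/2^m>U$ is a mesh-$2^{-m}$ Riemann sum over $[(k_0-1)2^{-m},\infty)$ or over $[k_0 2^{-m},\infty)$, not over $[U,\infty)$.

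What is true is the bound with one extra term on the right-hand side, for instance $2^{-m}\sup_{U<y\le U+2^{-m}}G(y)$. With that term, your cell-charging argument goes through verbatim: charge $y_{k_0}$ to it. It is then a perfectly good elementary substitute for the Ostrowski route, giving the same factor $2$ per interior maximum.

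One further caveat for your argument: if $G$ jumps down at a peak, the peak need not be an attained local maximum. This can happen for $G_+$, which contains $|P_\alpha|$, and payoffs may jump. In that case the estimate $2^{-m}G(y_k)\le 2^{-m}G(y_n^*)$ for the adjacent grid points must use the one-sided limit of $G$ at the peak instead.
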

\begin{proof}
$G$ is piecewise monotone in $[U,\infty)$ and tends to zero at infinity, as it is integrable. It follows that $G$ has a bounded variation. By the Ostrowski-integral inequality (see \cite[Theorem 3]{dragomir1999ostrowski}), it holds that
$$\left|\int_U^\infty G(y) \dd y-2^{-m}\sum_{\frac{k}{2^{m}}>U}G(k/2^m)\right| \leq 2^{-m}\mathbf{V}(G),
$$
where $\mathbf{V}(G)$ denotes the variation of $G$ in $[U,\infty)$. $\mathbf{V}(G)$ is equal to the sum of the variations on each interval where the function is monotone. Namely, we have that $\mathbf{V}(G) = 2 \sum_{U < y^*_n} (G(y^*_n) - G(y_{n,*})),$
where $y_{n,*}$ is a local minimum of $G$. Since $G$ is positive, $\mathbf{V}(G) \leq  2\sum_{U < y^*_n} G(y^*_n)$, and this gives \eqref{eq:tail-sum-envelope}.
\end{proof}

We now prove the main result related to the truncation error.
\begin{theorem}
\label{teo:precise truncation error bound}
Let $f$ be a density function and $P$ a payoff function satisfying Assumptions \ref{ass: f and P regularity. P jumps} and \ref{ass:noempty strip intersection}. Let $\alpha \in \delta_V$ such that $P_\alpha$ and $f_\alpha$ verify the assumptions in \cref{lem:fourier box value} and \cref{teo:estimate fat tails}. Let $\Gamma(s,y) = \int_y^\infty t^{s-1}e^{-t}\dd t$ be the upper incomplete Gamma function. Given $\mu\in \R$ then for $U:=\frac{u}{2^m}>\max(0, 1+\mu)$, up to $O(U^{-2})$, we have
\begin{align}
	\bar{T}^{+}(u) \leq &  \, C_+\, e^{\alpha \mu}\frac{ \|P_{\alpha}\|_{L^\infty([U,\infty))} + \varepsilon_V U^{-1}}{(b-\alpha)^{\gamma^+}} \Gamma(\gamma^+, (b-\alpha)(U-\mu)) \label{eq:final tail bound right precise}\\
	& + \varepsilon_D \left\| P_{\alpha} \right\|_{L^1([U,\infty))} (U-\mu)^{-1}  + \varepsilon_D \varepsilon_V \mu^{-1}\log\left(\frac{U}{U-\mu}\right) \nonumber \\
    &+ 2^{1-m}\sum_{n=1}^N G_+(y_n^*)\mathbf 1_{\{U<y_n^*\}},
    \nonumber
\end{align}
where $y^*_n$ denotes the local maxima in $(0,\infty) \cap (1+\mu,\infty)$ of 
$$G_+(y)= \left(C_+\, e^{\alpha \mu} (y-\mu)^{\gamma^+-1}
 e^{-(b-\alpha)(y-\mu)} +   \varepsilon_D (y-\mu)^{-1}\right) \left(|P_{\alpha}(y)| + \varepsilon_V |y|^{-1}\right) .$$
Analogously, if $L:=\frac{l}{2^m}<\min(0,\mu -1)$ then, up to $O(L^{-2})$, it holds that
\begin{align}
	\bar{T}^{-}(l) \leq &  \, C_- \, e^{\alpha \mu} \frac{ \|P_{\alpha}\|_{L^\infty((-\infty,L])} + \varepsilon_V |L|^{-1}}{(a+\alpha)^{\gamma^-}} \Gamma(\gamma^{-},(a+\alpha)|L-\mu|) \label{eq:final tail bound left precise}  \\
	& + \varepsilon_D \left\| P_{\alpha} \right\|_{L^1((-\infty,L])} |L-\mu|^{-1}  + \varepsilon_D \varepsilon_V \mu^{-1}\log\left(\frac{L-\mu}{L}\right)\nonumber\\%\log(|1-\mu L^{-1}|) \nonumber\\
    & + 2^{1-m}\sum_{n=1}^N G_-(y_n^*)\mathbf 1_{\{L>y_n^*\}},\nonumber
\end{align}
where $y^*_n$ denotes the local maxima in $(-\infty,0) \cap (-\infty, \mu -1)$ of 
$$G_-(y)= \left(C_- \, e^{\alpha \mu}  |y-\mu|^{\gamma^+-1}
 e^{-(a+\alpha)|y-\mu|} +   \varepsilon_D |y-\mu|^{-1}\right) \left(|P_{\alpha}(y)| + \varepsilon_V |y|^{-1}\right) .$$
\end{theorem}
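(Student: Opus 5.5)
The plan is to combine the pointwise coefficient bounds of \cref{cor:SWIFT density bound} and \cref{cor:SWIFT payoff bound} into a single envelope for the products $|\mathcal{P}^\alpha_{m,k}|$, and then to convert the tail sum into an integral using \cref{lem:tail-sum-envelope}. We treat only $\bar T^+(u)$; the bound for $\bar T^-(l)$ follows by the same argument with $a+\alpha$, $C_-$ and $\gamma^-$ in place of $b-\alpha$, $C_+$ and $\gamma^+$.

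\textbf{Envelope.} For $k>u$ set $y_k=k/2^m>U>\max(0,1+\mu)$. Then $y_{k,\mu}=y_k-\mu>1$, so the upper branch of \eqref{eq:density_bound} applies. Since $y_k>0$, $k\neq0$ and \eqref{eq:general_payoff_bound weak} also applies. The jump points of $P$ are finitely many, so they can be absorbed by replacing $|P_\alpha(y_k)|$ with one-sided limits; they do not affect the tail estimate. Multiplying the two bounds, the factors $2^{-m/2}$ combine into $2^{-m}$, and up to $O(U^{-2})$ we get $|\mathcal{P}^\alpha_{m,k}|\le 2^{-m}G_+(y_k)$ with $G_+$ exactly as in the statement. $G_+$ is nonnegative and integrable on $(U,\infty)$ because $P_\alpha\in L^1$. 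It is also piecewise monotone with finitely many local maxima: the density factor is an exponential–polynomial plus $(y-\mu)^{-1}$, and $P_\alpha$ has finitely many critical points and pieces by \cref{ass: f and P regularity. P jumps}.

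\textbf{Sum to integral.} Applying \cref{lem:tail-sum-envelope} gives
\[
\bar T^+(u)\le\int_U^\infty G_+(y)\,\dd y+2^{1-m}\sum_n G_+(y_n^*)\mathbf 1_{\{U<y_n^*\}},
\]
which already produces the last line of \eqref{eq:final tail bound right precise}. It remains to expand $G_+$ into four products and integrate each.

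\begin{enumerate}[label=(\alph*)]
\item Exponential term times $|P_\alpha|+\varepsilon_V|y|^{-1}$. On $y>U>0$ we have $|y|^{-1}\le U^{-1}$, so bound this factor by $\|P_\alpha\|_{L^\infty([U,\infty))}+\varepsilon_VU^{-1}$. Substituting $t=(b-\alpha)(y-\mu)$ then gives
\[
\int_U^\infty (y-\mu)^{\gamma^+-1}e^{-(b-\alpha)(y-\mu)}\,\dd y=(b-\alpha)^{-\gamma^+}\,\Gamma\bigl(\gamma^+,(b-\alpha)(U-\mu)\bigr),
\]
which is the first line of the bound.
\item $\varepsilon_D(y-\mu)^{-1}|P_\alpha(y)|$. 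Bound $(y-\mu)^{-1}\le (U-\mu)^{-1}$ to get $\varepsilon_D\|P_\alpha\|_{L^1([U,\infty))}(U-\mu)^{-1}$.
\item $\varepsilon_D\varepsilon_V(y-\mu)^{-1}y^{-1}$. Using partial fractions, $\frac{1}{(y-\mu)y}=\mu^{-1}\bigl(\frac1{y-\mu}-\frac1y\bigr)$, so the integral over $[U,\infty)$ equals $\mu^{-1}\log\frac{U}{U-\mu}$. For $\mu=0$ the limit $U^{-1}$ is taken.
\end{enumerate}
Collecting (a)–(c) with the Ostrowski remainder gives \eqref{eq:final tail bound right precise}.

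\textbf{Main obstacle.} The delicate step is the envelope. One has to justify that the $O(y_{k,\mu}^{-2})$ and $O(y_k^{-2})$ remainders from \cref{lem:fourier box value} can be carried through the product and the sum. After summation they contribute $O(U^{-1})$-type terms multiplied by small constants, which the statement absorbs into the "up to $O(U^{-2})$" qualifier. I would handle this by noting that the remainders of \eqref{eq:four box remainder-bound} are explicit multiples of $y^{-2}$, so they enter the envelope as additional integrable monotone terms. I would also check that $G_+$ satisfies the hypotheses of \cref{lem:tail-sum-envelope}, in particular the finiteness of its local maxima, using \cref{ass: f and P regularity. P jumps}.
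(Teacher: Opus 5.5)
Your proposal is correct and follows the paper's proof essentially step by step. It uses the same envelope $G_+$ from \cref{cor:SWIFT density bound} and \cref{cor:SWIFT payoff bound}, the Ostrowski-type \cref{lem:tail-sum-envelope}, and the same three integral estimates (incomplete Gamma, $(U-\mu)^{-1}\|P_\alpha\|_{L^1([U,\infty))}$, and the partial-fraction logarithm); your treatment of $\mu=0$ is a small addition the paper omits.

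One correction to your last paragraph. The remainder cross terms are $O(U^{-2})$ after summation, not $O(U^{-1})$, because each pointwise $O(y^{-2})$ remainder is multiplied by the other coefficient's envelope, which consists of $O(y^{-1})$ terms plus the integrable term $|P_\alpha|$; this is what lets them fit under the stated qualifier. Also, \eqref{eq:four box remainder-bound} is a multiple of $y^{-2}$ only when \cref{lem:fourier box value} is applied with $N\ge 1$.
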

\begin{proof}
We prove the result for $\bar{T}^{+}(u) $, as the proof for $\bar{T}^{-}(l) $ is analogous. Given the assumptions on $P_\alpha$ and $f_\alpha$, Corollaries \ref{cor:SWIFT density bound} and \ref{cor:SWIFT payoff bound} hold, and we can apply the bounds in Equations \eqref{eq:density_bound} and \eqref{eq:general_payoff_bound weak}. Namely, when $ y_k=\frac{k}{2^m} >\mu + 1 $, we have \footnote{ In case $y_k$ corresponds to a jump point of the payoff, Equation \eqref{eq:general_payoff_bound weak} remains valid with $P(y_k)$ replaced by the average of the left and right limits of the payoff at $y_k$. The maximum between the left limit $P(y_k^-)$ and the right limit $P(y_k^+)$ can be used in the bound of $|\mathcal{P}_{m,k}^{\alpha}(x)|$. }
    \begin{align}
	\left|\mathcal{P}_{m,k}^{\alpha}(x)\right| \leq& 
    2^{-m} C_+\, e^{\alpha \mu} \left(|P_{\alpha}(y_k)| + \varepsilon_V |y_k|^{-1}|\right) (y_{k}-\mu)^{\gamma^+-1}
 e^{-(b-\alpha)y_{k}-\mu}  \label{eq:overall product bound with mu} \\
  & \hspace{-0.15cm}+ 2^{-m}   \varepsilon_D (y_{k}-\mu)^{-1} \left(|P_{\alpha}(y_k)| + \varepsilon_V \, |y_{k}|^{-1} \right) = 2^{-m} \, G_+(y_{k}). \nonumber 
\end{align}
By assumption, $y_k>0$ and $y_k >1+\mu$, hence $G_+$ is defined in $(0,\infty)\cap (1+\mu, \infty)$.
 By \cref{ass:noempty strip intersection}, $ P_{\alpha}\in L^1(\R)\cap L^\infty(\R) $, and by \cref{ass: f and P regularity. P jumps}, the damped payoff is piecewise monotone with finitely many critical points. The same holds for the functions
 $$y_{k} \mapsto  (y_{k}-\mu)^{\gamma^+-1}
 e^{-(b-\alpha)(y_{k}-\mu)}, \quad \text{and} \quad y_{k} \mapsto  \varepsilon_V|y_{k}|^{-1},$$
 defined in $(0,\infty)\cap (1+\mu, \infty)$.
 In light of this, the function $G_+$, which provides an upper bound for the product coefficients, is piecewise monotone, has finitely many local maxima $y^*_1, \ldots, y^*_N$, and is integrable. Therefore, \cref{lem:tail-sum-envelope} applies, yielding
 \begin{equation*}
    \bar{T}^{+}(u)\leq  \int_U^\infty G_+(y)\dd y
+2^{1-m}\sum_{n=1}^N G_+(y_n^*)\mathbf 1_{\{U<y_n^*\}}.
 \end{equation*}
The statement given by Equation \eqref{eq:final tail bound right precise} follows from the estimation of the terms that define $G_+$. Namely,
\begin{equation*}
    \begin{aligned}
       \int_{U}^\infty G_+(y)\dd y =& C_+ \, e^{\alpha \mu}\int_{U}^\infty \left(|P_{\alpha}(y)| + \varepsilon_V |y|^{-1}\right) (y-\mu)^{\gamma^+-1} e^{-(b-\alpha)(y-\mu)} \dd y \\
       &  + \varepsilon_D \int_{U}^{\infty} \left(|P_{\alpha}(y)| +\varepsilon_V |y|^{-1}\right)(y-\mu)^{-1} \dd y \\
       \leq &  C_+ \, e^{\alpha \mu}\left(\| P_{\alpha}\|_{L^\infty([U,\infty))} +\varepsilon_V U^{-1}\right) \int_U^\infty  (y-\mu)^{\gamma^+-1}e^{-(b-\alpha)(y-\mu)} \dd y \\
       & +  \varepsilon_D (U-\mu)^{-1} \int_{U}^{\infty} |P_{\alpha}(y)| \dd y + \varepsilon_D \varepsilon_V \int_U^\infty |y|^{-1}(y-\mu)^{-1} \dd y\\
        = &C_+ \, e^{\alpha \mu}\left(\| P_{\alpha}\|_{L^\infty([U,\infty))} +\varepsilon_V U^{-1}\right) (b-\alpha)^{-\gamma^+}\Gamma(\gamma^+, (b-\alpha)(U-\mu)) \\
       & + \varepsilon_D \left\| P_{\alpha} \right\|_{L^1([U,\infty))} (U-\mu)^{-1} + \varepsilon_D \varepsilon_V \mu^{-1}\log\left(\frac{U}{U-\mu}\right), %- \varepsilon_D \varepsilon_V \mu^{-1}\log(|1-\mu U^{-1}|),
    \end{aligned}
\end{equation*}
and this concludes the proof.
\end{proof}
\begin{corollary}
Consider a density $f$ and a payoff $P$, along with $\alpha\in \R$ as in \cref{teo:precise truncation error bound}. If $U=\frac{u}{2^m}>\max(0,\mu+1,\mu +\frac{\gamma^+-1}{b-\alpha}, y^*_N)$ then, up to $O(U^{-2})$, we have
    \begin{align}
	\bar{T}^{+}(u) \leq &  \, C_+ \, e^{\alpha\mu} \left(\|P_{\alpha}\|_{L^\infty([U,\infty))} + \varepsilon_V U^{-1}\right) \frac{(U-\mu)^{\gamma^{+}-1}e^{-(b-\alpha)(U-\mu)}}{(b-\alpha)- \frac{\gamma^{+}-1}{U-\mu}} \label{eq:final tail bound right}\\
&+ \varepsilon_D  (\left\| P_{\alpha} \right\|_{L^1([U,\infty)}  + \varepsilon_V) \max\left\{U^{-1},(U-\mu)^{-1}\right\}. \nonumber %+ 2^{1-m}\sum_{n=1}^N G_+(y_n^*)\mathbf 1_{\{U<y_n^*\}},\nonumber
\end{align}
%   In general, for $U$ sufficiently big we have
%   \begin{equation}
%   \label{eq:final tail bound right big U}
%	\hat{T}^{+}(u) \leq %&  \, C_+ e^{\alpha \mu} \frac{ \|P_{\alpha}\|_{L^\infty([U,\infty))} + \epsilon_r}{(b-\alpha)- (\gamma^{+}-1)(U-\mu)^{-1}} (U-\mu)^{\gamma^{+}-1}e^{-(b-\alpha)(U-\mu)}  + \nonumber\\
%	 \varepsilon_D \left\| P_{\alpha} \right\|_{L^1([U,\infty))} (U-\mu)^{-1}  + \varepsilon_D \varepsilon_V U^{-1}.
%\end{equation}
An analogous bound holds for the left tail $\bar{T}^-(l)$. In particular, the truncation error of the damped SWIFT method exhibits two different decaying regimes. Under the first one, the bound is $\lesssim U^{\gamma^+-1}e^{-(b-\alpha)U}$ ($|L|^{\gamma^--1}e^{-(a+\alpha)|L|}$), whose decaying is exponential, while it is $\lesssim  U^{-1}$ ($|L|^{-1}$) under the second regime.
\end{corollary}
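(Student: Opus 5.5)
The plan is to start from the bound of \cref{teo:precise truncation error bound} and simplify each of its four terms using the extra hypothesis on $U$. The condition $U>y^*_N$ makes every indicator $\mathbf 1_{\{U<y_n^*\}}$ vanish. The Ostrowski remainder in \eqref{eq:final tail bound right precise} therefore disappears, and only three terms remain to be estimated.

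\textbf{The incomplete Gamma term.} Set $s=\gamma^+$, $\beta=b-\alpha>0$ and $z=\beta(U-\mu)>0$. I will use the standard bound $\Gamma(s,z)\le z^{s-1}e^{-z}/(1-(s-1)/z)$, valid when $z>s-1$; the hypothesis $U>\mu+\frac{\gamma^+-1}{b-\alpha}$ is exactly this condition.

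To prove the bound, write $\Gamma(s,z)=z^{s-1}e^{-z}\int_0^\infty (1+t/z)^{s-1}e^{-t}\dd t$.
\begin{itemize}
\item If $s\le1$, then $(1+t/z)^{s-1}\le1$, so the integral is at most $1$, and $1\le 1/(1-(s-1)/z)$.
\item If $s>1$, integrate by parts: $\Gamma(s,z)=z^{s-1}e^{-z}+(s-1)\Gamma(s-1,z)$. Since $t^{s-2}\le t^{s-1}/z$ on $[z,\infty)$, this gives $\Gamma(s-1,z)\le\Gamma(s,z)/z$. Rearranging yields the claim.
\end{itemize}
Dividing by $\beta^{s}$, we get $\beta^{-s}\Gamma(s,\beta(U-\mu))\le (U-\mu)^{s-1}e^{-\beta(U-\mu)}/\bigl(\beta-\frac{s-1}{U-\mu}\bigr)$. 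This is precisely the first line of \eqref{eq:final tail bound right}.

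\textbf{The algebraic terms.} Keep $\varepsilon_D\|P_\alpha\|_{L^1([U,\infty))}(U-\mu)^{-1}$ as it is. For the logarithmic term, use $\mu^{-1}\log\frac{U}{U-\mu}=\int_U^\infty \frac{\dd y}{y(y-\mu)}$, which equals its original integral form. This integral is at most $\int_U^\infty \max\{y^{-2},(y-\mu)^{-2}\}\dd y=\max\{U^{-1},(U-\mu)^{-1}\}$. The case $\mu=0$ is read as the limit $U^{-1}$. Bounding $(U-\mu)^{-1}$ by the same maximum in the first term and grouping gives the second line $\varepsilon_D(\|P_\alpha\|_{L^1}+\varepsilon_V)\max\{U^{-1},(U-\mu)^{-1}\}$.

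\textbf{The two regimes.} The regimes follow by comparing the two lines. As $U\to\infty$ the denominator tends to $b-\alpha$ and $(U-\mu)^{\gamma^+-1}e^{-(b-\alpha)(U-\mu)}\asymp U^{\gamma^+-1}e^{-(b-\alpha)U}$. So the first line dominates while it exceeds the $\varepsilon_D U^{-1}$ term, and the algebraic term dominates afterwards. The left tail is handled symmetrically with $a+\alpha$, $\gamma^-$ and $|L-\mu|$.

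The main point needing care is the incomplete Gamma inequality for general (possibly complex-real-part or $<1$) $\gamma^+$. I take $\gamma^+$ real here, as in the theorem. The rest is bookkeeping, with the $O(U^{-2})$ terms inherited from \cref{teo:precise truncation error bound}.
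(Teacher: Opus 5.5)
Your overall route matches the paper's. You kill the Ostrowski terms using $U>y^*_N$, control the incomplete Gamma term by one integration by parts, and bound the logarithmic term by $\max\{U^{-1},(U-\mu)^{-1}\}$. Your $s>1$ argument is correct: it is the paper's integration by parts in rescaled variables. Your justification of the log bound via $\frac{1}{y(y-\mu)}\le\max\{y^{-2},(y-\mu)^{-2}\}$ is also correct, and cleaner than the paper, which only asserts that inequality.

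The gap is the case $s=\gamma^+<1$. There $1-(s-1)/z=1+(1-s)/z>1$, so $1/(1-(s-1)/z)<1$, and your step $1\le 1/(1-(s-1)/z)$ is false. This cannot be repaired, because the target inequality is itself false for $s<1$. Since $\log(1+x)\le x$ and $s-1<0$, you have $(1+t/z)^{s-1}\ge e^{(s-1)t/z}$. Hence $\Gamma(s,z)\ge z^{s-1}e^{-z}\int_0^\infty e^{-(1-(s-1)/z)t}\dd t=z^{s-1}e^{-z}/(1-(s-1)/z)$, strictly when $s\neq1$. For example, $s=1/2$, $z=1$ gives $\Gamma(1/2,1)\approx0.279$ against $e^{-1}/1.5\approx0.245$.

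What your $s\le1$ computation actually proves is the cruder bound $\Gamma(s,z)\le z^{s-1}e^{-z}$. That is the first line with denominator $b-\alpha$ in place of $(b-\alpha)-\frac{\gamma^+-1}{U-\mu}$. The stated form is off only by a relative $O((U-\mu)^{-2})$ factor, since the expansions differ first at order $(1-s)/z^2$. So for $\gamma^+<1$ it survives only as an asymptotic statement under the ``up to $O(U^{-2})$'' caveat, not as a genuine inequality.

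The paper's own proof has the same defect. In its integration by parts, the step bounding $b^{-1}a\int_U^\infty y^{a-1}e^{-by}\dd y$ above by $b^{-1}aU^{-1}\int_U^\infty y^{a}e^{-by}\dd y$ reverses when the exponent $a=\gamma^+-1$ is negative. The clean fix is to claim the sharp first line only for $\gamma^+\ge1$, and to use the denominator $b-\alpha$ when $\gamma^+<1$. In that case the hypothesis $U>\mu+\frac{\gamma^+-1}{b-\alpha}$ adds nothing beyond $U>\mu$.
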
 
\begin{proof}
   The statement given by Equation \eqref{eq:final tail bound right} follows by estimating the terms in \eqref{eq:final tail bound right precise}. Under the assumption $U>y^*_N$, the terms containing $G_+$ in \eqref{eq:final tail bound right precise} vanish. Using integration by parts, the upper incomplete Gamma function satisfies
    \begin{align*}
      \int_U^{\infty} y^{a}e^{-by}\dd y &= b^{-1} U ^{a}e^{-bU} + b^{-1} a\int_U^{\infty} y^{a-1}e^{-by}\dd y \\
      &\leq   b^{-1} U ^{a}e^{-bU} + b^{-1} a U^{-1}\int_U^{\infty} y^{a}e^{-by}\dd y,
    \end{align*}
%Accordingly, 
%$$ \left( 1-b^{-1}a U^{-1}\right) \int_U^{\infty} x^{a}e^{-bx}dx \leq  b^{-1} U^{a}e^{-bU} ,$$
and if $ b-aU^{-1} >0 $ then $\int_U^{\infty} y^{a}e^{-by} \dd y\leq  \frac{ U^{a} e^{-bU} }{b-aU^{-1}}$. 
In our case, this condition is equivalent to $U > \mu + \frac{\gamma^+-1}{b-\alpha}$. For the logarithmic term in \eqref{eq:final tail bound right precise}, we apply
\begin{equation*}
%\label{eq:proof cor tail bound, log bound}
    \frac{1}{\mu}\log\left(\frac{U}{U-\mu}\right)\leq \max\left(\frac{1}{U}, \frac{1}{U-\mu}\right),
\end{equation*}
which is valid for $U>0$ and $U>\mu$, and this gives us \eqref{eq:final tail bound right}.
%In fact, for $\mu >0$, the bound \eqref{eq:proof cor tail bound, log bound} is equivalent to
%\begin{equation*}
%    \begin{aligned}
%    &\log\left(\frac{U}{U-\mu}\right) \leq \frac{\mu}{U-\mu} \iff
%    \log\Big{(}1 +\underbrace{\frac{\mu}{U-\mu}}_{=t}\Big{)} \leq \underbrace{\frac{\mu}{U-\mu}}_{=t}
%    \end{aligned}
%\end{equation*}
%which is true since $\log(1+t)\leq t$ when $t\geq 0$. On the other hand, when $\mu<0$, the bound is equivalent to 
%\begin{equation*}
%    \begin{aligned}
%    &\log\left(\frac{U}{U-\mu}\right) \geq \frac{\mu}{U} \iff
%    \log\Big{(}1 -\frac{\mu}{U}\Big{)} \leq - \frac{\mu}{U},
%    \end{aligned}
%\end{equation*}
%and since $-\frac{\mu}{U}>0$, the estimation \eqref{eq:proof cor tail bound, log bound} follows as before. 
\end{proof}
\begin{remark}[Support simplifications for vanilla payoffs]\label{rem:support-simplification}
For a put payoff $P(y)=(K-e^y)^+$, $P_\alpha(y)=0$ for $y\ge \log K$. Therefore, the right-tail payoff norm terms vanish once $U\ge \log K$.  For a call payoff $P(y)=(e^y-K)^+$, $P_\alpha(y)=0$ for $y\le \log K$, and the corresponding left-tail payoff terms vanish once $L\le \log K$.
\end{remark}

\subsection{Quadrature errors}
	\label{sec:quadErrors}
	We now bound the error produced by approximating the coefficient integrals
	\eqref{eq:D-fourier-real}--\eqref{eq:V-fourier} by the uniform composite
	trapezoidal rule. Let 
    %$c_m:=\frac{2^{-m/2}}{2\pi}$ and 
    \(N_Q\) be the number of trapezoidal nodes in
	\([-2^m\pi,2^m\pi]\), with $
	h_m:=%\frac{2r_m}{N_Q-1}
	%=
	\frac{2^{m+1}\pi}{N_Q-1}$. 
% For a sufficiently regular function \(g\), we have 
% 	\[
% 	\mathcal Q_{N_Q}[g]
% 	:=
% 	h_m\left[
% 	\frac12 g(-r_m)
% 	+
% 	\sum_{j=1}^{N_Q-2} g(-r_m+jh_m)
% 	+
% 	\frac12 g(r_m)
% 	\right].
% 	\]
	For \(k\in\Lambda\), set $
		\Delta^D_{m,k}(x)
		:=
		D^\alpha_{m,k}(x)-D^{\alpha,Q}_{m,k}(x),\;$ and  
	$	\Delta^V_{m,k}
		:=
		V^\alpha_{m,k}-V^{\alpha,Q}_{m,k}$.
	Then
	\begin{equation*}
		%\label{eq:quadrature-error-bound-basic}
		%\frac{|v_{m,\Lambda}(x)-v_{m,\Lambda,Q}(x)|}{e^{-r_0T}}
        E_{{\rm quad}}(m,\Lambda,Q,\alpha)
		\le
		E_{{\rm quad},D}(m,\Lambda,Q,\alpha)
		+
		E_{{\rm quad},V}(m,\Lambda,Q,\alpha),
	\end{equation*}
	where
	\begin{align}
		E_{{\rm quad},D}(m,\Lambda,Q,\alpha)
		&:=
		\sum_{k\in\Lambda}|V^\alpha_{m,k}|\,|\Delta^D_{m,k}(x)|, \qquad |\Delta^D_{m,k}(x)|=O(h_m^2),
		\label{eq:quadD-bound}\\
		E_{{\rm quad},V}(m,\Lambda,Q,\alpha)
		&:=
		\sum_{k\in\Lambda}|D^{\alpha,Q}_{m,k}(x)|\,|\Delta^V_{m,k}|,\qquad |\Delta^V_{m,k}|=O(h_m^2).
		\label{eq:quadV-bound}
	\end{align}
% 	Equivalently, if coefficientwise quadrature tolerances satisfy
% 	\[
% 	|\Delta^D_{m,k}(x)|\le \eta^D_{m,k}(N_Q),
% 	\qquad
% 	|\Delta^V_{m,k}|\le \eta^V_{m,k}(N_Q),
% 	\qquad k\in\Lambda,
% 	\]
% 	then
% 	\begin{align}
% 		E_{{\rm quad},D}(m,\Lambda,N_Q,\alpha)
% 		&\le
% 		\sum_{k\in\Lambda}|V^\alpha_{m,k}|\,\eta^D_{m,k}(N_Q),
% 		\label{eq:quadD-bound-tolerance}\\
% 		E_{{\rm quad},V}(m,\Lambda,N_Q,\alpha)
% 		&\le
% 		\sum_{k\in\Lambda}|D^{\alpha,Q}_{m,k}(x)|\,\eta^V_{m,k}(N_Q).
% 		\label{eq:quadV-bound-tolerance}
% 	\end{align}
% 	The exact coefficients appearing on the right-hand sides can be bounded, where the
% 	corresponding assumptions are satisfied, by
% 	\cref{cor:SWIFT payoff bound,cor:SWIFT density bound}. In particular,
% 	\[
% 	|D^{\alpha,Q}_{m,k}(x)|
% 	\le
% 	|D^\alpha_{m,k}(x)|+\eta^D_{m,k}(N_Q).
% 	\]
% We next give a finite-interval a priori bound for the coefficientwise quadrature
% errors. Define
% \[
% G^{D}_{m,k,\alpha}(u;x)
% :=
% \widehat f(u+\ii\alpha\mid x)e^{\ii k u/2^m},
% \qquad
% G^{V}_{m,k,\alpha}(u)
% :=
% \widehat P(u-\ii\alpha)e^{\ii k u/2^m}.
% \]
% In the models considered in this paper, these integrands are typically analytic in a
% neighbourhood of the real integration interval, provided that the shifted contours
% \(u+\ii\alpha\) and \(u-\ii\alpha\) do not intersect the singularity sets of
% \(\widehat f\) and \(\widehat P\), respectively. For the second-order finite-interval
% trapezoidal estimate, however, it is sufficient to assume the weaker condition
% \[
% G^{D}_{m,k,\alpha}(\cdot;x)\in C^2([-r_m,r_m]),
% \qquad
% G^{V}_{m,k,\alpha}\in C^2([-r_m,r_m]).
% \]
In the estimations of $|\Delta^D_{m,k}(x)|$ and $|\Delta^V_{m,k}|$, the standard composite trapezoidal estimate on a finite interval \cite{DavisRabinowitz1984} was considered.

\subsection{Damping-parameter selection}\label{subsec:damping-selection}
The damping parameter influences the size, oscillation, and regularity of the Fourier integrands used for the coefficient computation. We use the following practical criterion to select $\alpha$ within the admissible set:
\begin{equation}\label{eq:damping-objective}
\alpha^*
\in
\operatorname*{arg\,min}_{\alpha\in\delta_V(x)}
J(\alpha),
\qquad
J(\alpha):=|\hat{P}_\alpha(0)|\,|\hat{f}_\alpha(0\mid x)|.
\end{equation}
For nonnegative payoff and density factors, this objective is simply the product of their $L^1$ masses.  The purpose of \eqref{eq:damping-objective} is to reduce the peak size of the coefficient integrands and thereby improve quadrature conditioning.  Under the positivity and ridge assumptions used in the optimal-damping Fourier-pricing literature \cite{BayerBenHammoudaPapapantoleonSametTempone2022,bayer2024quasi}, minimizing the peak at the origin is equivalent to minimizing the sup-norm of the damped integrand on the integration contour.

\subsection{Algorithm} \label{sec:algorithm}
%The strategy is as follows. We begin by selecting the value of the damping parameter $\alpha$. Then, given a target accuracy $\epsilon$, we first choose the smallest integer $m$ such that the projection error is below $\epsilon$ (we want $m$ to be as small as possible, since the larger $m$ is, the more coefficients become significant). Next, we start from $k^* = \lfloor 2^m \mu  \rfloor$, and we proceed to the right until we find the smallest $|u|$ such that $\hat{T}^{+}(u)$ is of the same order as the projection error. We then proceed analogously to the left until finding the smallest $|l|$. To make the choice of $l$ and $u$ as tight as possible, the integrals in \eqref{eq:tailA}, \eqref{eq:tailB}, \eqref{eq:tailC}, and \eqref{eq:tailD} can be computed numerically, rather than using the proposed bounds. Finally, we choose the smallest number of quadrature nodes such that the quadrature errors are of the same order as the projection error.
The procedure for selecting the parameters of the method is shown in \cref{alg:dampedSWIFT}. Given a target accuracy, we first choose the smallest resolution level such that the projection error is below the prescribed tolerance. 
%Given a target accuracy $\epsilon$, we first choose the smallest integer $m$ such that the projection error is below $\epsilon$ (we want $m$ to be as small as possible, since the larger $m$ is, the more coefficients become significant). 
Finally, $l$, $u$ and $N_Q$ are the smallest integers such that truncation and quadrature errors are smaller than the product of the projection error and a parameter $\Upsilon\in(0,1)$.

\begin{algorithm}
\caption{Parameter selection for Damped SWIFT}
\label{alg:dampedSWIFT}
\begin{algorithmic}[1]
\STATE{\textbf{Inputs:} target accuracy, strip $\delta_V$, Fourier transforms, ratio $\Upsilon\in(0,1)$.}
\STATE{Select $\alpha\in\delta_V(x)$ by minimizing \eqref{eq:damping-objective}.}
\STATE{Choose the smallest resolution level $m$ such that the numerical approximation of \eqref{eq:projection-error-bound}, denoted as $\bar{E}_{\rm proj}(m,\alpha)$, is below the target accuracy.}
\STATE{Starting from $k = \lfloor 2^m \mu  \rfloor$, increase $u$ until $\bar{T}^+(u)\leq \Upsilon \bar{E}_{\rm proj}(m,\alpha)$.}
\STATE{Starting from $k = \lfloor 2^m \mu  \rfloor$, decrease $l$ until $\bar{T}^-(l)\leq \Upsilon \bar{E}_{\rm proj}(m,\alpha)$.}
\STATE{Let $\bar{E}_{\rm quad}(N_Q)$ denote \eqref{eq:quadD-bound} plus \eqref{eq:quadV-bound}. Increase the number of quadrature nodes $N_Q$ until $\bar{E}_{\rm quad}(N_Q)\le\Upsilon \bar{E}_{\rm proj}(m,\alpha)$.}
\STATE{\textbf{Outputs:} $\alpha,m,l,u,N_Q$ and $v_{m,\Lambda,Q}(x)$.}
\end{algorithmic}
\end{algorithm}

\section{Numerical experiments}\label{sec:numREsults}

In this section, we present numerical results for the proposed method and highlight the advantages of the damped SWIFT method over the non-damped version. In the absence of a closed-form solution, reference prices were computed by means of the COS method. Damped and undamped SWIFT are compared against the same reference value and target accuracy. We report the truncation range $l$ and $u$, the number of quadrature nodes \(N_Q\), and the resulting absolute error. We followed the strategy outlined in \cref{alg:dampedSWIFT} with
%$\epsilon=\hat{E}_{\rm proj}(m,\alpha)$ and 
$\Upsilon=0.1$. If the target accuracy is close to machine precision, we choose $l$ and $u$ such that the product of the coefficients is of the same order of accuracy. \ref{sup:boundsNumExam} illustrates with a numerical example the estimates provided in \cref{sec:Asymptotic decay of the payoff and density coefficients with respect to the translation parameters}.

In all the numerical experiments presented below, the damped method consistently requires fewer coefficients than the undamped formulation. This improvement is particularly pronounced in the region where the payoff is zero. For calls, this is the left tail; for puts, it is the right tail. This is consistent with the product-coefficient viewpoint: even if the density coefficients remain non-negligible in that region, the payoff coefficients significantly reduce \(|D^\alpha_{m,k}\cdot V^\alpha_{m,k}|\). Therefore, a truncation rule based on the product coefficients removes terms that density-only truncation rules tend to retain.

In \cref{tab:GBM}, we begin with the pricing of call and put options under the GBM model. Next, in \cref{tab:VG},  we present numerical results under the heavy-tailed VG model and examine the behavior as the resolution level $m$ increases.  Finally, in \cref{tab:VG_call_put}, we price long-maturity call and put options under the VG model. The large errors of the undamped SWIFT method for the \(T=100\) call are caused by the growth of the undamped call payoff coefficients on the right tail. Since \((e^y-K)^+\) grows exponentially as \(y\to\infty\), the corresponding payoff coefficients do not decay sufficiently fast over the wide truncation range required at long maturity. Increasing the number of retained terms can therefore amplify the unstable right-tail contribution rather than improve the approximation. In contrast, the damped method converges rapidly with few coefficients and remains stable when adding unnecessary terms. For put options, the non-damped method converges, though it requires more coefficients than the damped approach.

\begin{table}[!htbp]
\footnotesize
\centering
\caption{Pricing of call and put options under GBM with $S_0=K=100$, $r_0=0.1$, $T=1$, $\sigma=0.2$. 
%In both methods $m=4$ was considered.
%(the modulus of the characteristic function and of the characteristic function extended with damping at the corners of the domain are of the same order of magnitude). 
}
\label{tab:GBM}
\begin{tabular}{|r|r|r|r|r|r|r|r|}
\hline
$m=4$ & \multicolumn{3}{|c|}{SWIFT} & \multicolumn{4}{|c|}{Damped SWIFT, $N_Q = 120$} \\
\hline
Option & $l$ & $u$ & Abs error & $l$ & $u$ & $\alpha$ & Abs error \\
\hline
Call & $-30$ & $33$ & $1.9 \cdot 10^{-14}$ & $-17$ & $28$ & $6.45$ & $8.8 \cdot 10^{-15} $ \\
\hline
Put & $-31$ & $34$ & $3.5 \cdot 10^{-15}$ & $-26$ & $18$ & $-7.91$ & $2.7 \cdot 10^{-14} $ \\
\hline
\end{tabular}
\end{table}

%Next, in \cref{tab:CGMY}, we price call and put options under the CGMY model. In this case, the damped method, by truncating the series using also the information from the payoff coefficients, requires significantly fewer coefficients to achieve similar or even better accuracy than the non-damped method.

% \begin{table}[!htbp]
% \footnotesize
% \centering
% \caption{Pricing of call and put options under CGMY with $S_0=K=100$, $r=0.1$, $T=1$, $C=1$, $G=5$, $M=5$, $Y=0.1$. 
% %The scale $m=6$ was considered in both methods.
% %(the modulus of the characteristic function and of the characteristic function extended with damping at the corners of the domain are of the same order of magnitude). 
% %In the damped SWIFT version, $2^{11}$ quadrature nodes were considered. 
% %The reference prices are computed by means of the COS method with a very large number of cosine terms ($15.86966263787780$ for the call, and $6.353404530457659$ for the put).
% } \label{tab:CGMY}
% \begin{tabular}{|r|r|r|r|r|r|r|r|}
% \hline
% $m=6$ & \multicolumn{3}{|c|}{SWIFT} & \multicolumn{4}{|c|}{Damped SWIFT, $2^{11}$ quad points} \\
% \hline
% Option & $l$ & $u$ & Abs error & $l$ & $u$ & $\alpha$ & Abs error \\
% \hline
% Call & $-305$ & $312$ & $ 3.4\cdot 10^{-5}$ & $-10$ & $280$ & $3.56$ & $1.0 \cdot 10^{-7} $ \\
% \hline
% Put & $-305$ & $312$ & $ 9.7 \cdot 10^{-7}$ & $-220$ & $20$ & $-3.45$ & $7.5\cdot 10^{-7} $ \\
% \hline
% \end{tabular}
% \end{table}

\begin{table}[!htbp]
\footnotesize
\centering
\caption{Pricing of a call option under VG dynamics with parameters $S_0=K=100$, $r_0=0.02$, $T=1$, $\sigma=0.4$, $\theta=-0.3$, $\nu=0.5$. COS reference price $16.573188141372889$.
%In the damped SWIFT, the damping parameter is $\alpha=4.00$. 
%The reference price was computed by means of the COS method with a very large number of cosine terms ($16.573188141372889$).
}
\label{tab:VG}
\begin{tabular}{|r|r|r|r|r|r|r|r|r|}
\hline
 & \multicolumn{3}{|c|}{SWIFT} & \multicolumn{4}{|c|}{Damped SWIFT, $\alpha=4$} \\
\hline
$m$ & $l$ & $u$ & Abs error & $l$ & $u$ & $N_Q$ & Abs error \\
\hline
$3$ & $-35$ & $39$ & $ 6.4\cdot 10^{-2}$ & $-5$ & $22$ & $60$ & $3.0\cdot 10^{-4} $ \\
\hline
$4$ & $-69$ & $77$ & $1.9 \cdot 10^{-3}$ & $-10$ & $48$ & $120$ & $4.1 \cdot 10^{-6} $ \\
\hline
$5$ & $-138$ & $153$ & $4.7 \cdot 10^{-6}$ & $-20$ & $110$ & $270$ & $4.2 \cdot 10^{-7} $ \\
\hline
$6$ & $-276$ & $305$ & $ 5.8\cdot 10^{-7}$ & $-41$ & $238$ & $550$ & $1.6 \cdot 10^{-7} $ \\
\hline
$7$ & $-551$ & $609$ & $5.3 \cdot 10^{-8}$ & $-80$ & $518$ & $1190$ & $2.0 \cdot 10^{-8} $ \\
\hline
\end{tabular}
\end{table}

%Finally, we price long-maturity call and put options under VG in \cref{tab:VG_call_2}. In the case of maturity $T=100$, when pricing call options, the SWIFT method does not converge to the option price, as expected, due to the lack of decay in the payoff coefficients. In fact, even as the number of coefficients considered increases, the method diverges more and more. In contrast, the damped method converges to the option price with very few coefficients. Another advantage of the damped method is that the inclusion of more coefficients than strictly necessary, although useless, does not adversely affect its convergence. For put options, since the payoff is decreasing, the non-damped method does converge, but it requires more coefficients than the damped method.

\begin{table}[!htbp]
\footnotesize
\centering
\caption{Pricing of call and put options under VG dynamics $S_0=100$, $K=70$, $r_0=0.05$, $\sigma=0.3$, $\theta=-0.4$, $\nu=0.5$ for $T=5,100$. The reference COS prices are also shown.
%Reference prices were computed by means of the COS method with a very large number of cosine terms ($54.194215297153875$ for $T=5$, and $99.80412945414334$ for $T=100$). 
In the damped SWIFT, $\alpha_{\text{call}}=2.06$ and $\alpha_{\text{put}}=-1.21$ for $T=5$, and $\alpha_{\text{call}}=1.08$ and $\alpha_{\text{put}}=-0.20$ for $T=100$, $N_Q = 1200$. In the undamped SWIFT, $L$ refers to the original SWIFT truncation parameter.}
\label{tab:VG_call_put}
\begin{tabular}{|c|r|r|r|r||r|r|r|}
\hline
 \multicolumn{2}{|c|}{Call} & \multicolumn{3}{c||}{$T=5$, $m=4$} & \multicolumn{3}{c|}{$T=100$, $m=0$} \\
 \hline
 \multicolumn{2}{|c|}{COS} & \multicolumn{3}{c||}{$54.194215297153875$} & \multicolumn{3}{c|}{$99.80412945414334$} \\
 \hline
\multirow{4}{*}{\rotatebox{90}{SWIFT}} & $L$ & $l$ & $u$ & Abs error & $l$ & $u$ & Abs error\\
\cline{2-8}
%  & $8$ & $-83$ & $154$ & $3.08 \cdot 10^{-4}$ & $5$ & $72$ & $6.35 \cdot 10^{25}$\\
\cline{2-8}
& $10$ & $-112$ & $184$ & $6.5 \cdot 10^{-6}$ & $-4$ & $80$ & $2.2 \cdot 10^{29}$\\
\cline{2-8}
% & $12$ & $-142$ & $213$ & $4.65 \cdot 10^{-7}$ & $-12$ & $88$ & $3.28 \cdot 10^{31}$\\
\cline{2-8}
& $14$ & $-171$ & $243$ & $5.2 \cdot 10^{-9}$ & $-20$ & $96$ & $9.2 \cdot 10^{32}$\\
\cline{2-8}
 & $16$ & $-201$ & $272$ & $4.28 \cdot 10^{-8}$ & $-28$ & $105$ & $1.07 \cdot 10^{33}$\\
% \cline{2-8}
% & $18$ & $-230$ & $302$ & $1.4 \cdot 10^{-7}$ & $-37$ & $113$ & $4.7 \cdot 10^{32}$\\
\cline{2-8}
\hline 
 \multicolumn{2}{|c|}{Damped} & $l$ & $u$ & Abs error & $l$ & $u$ & Abs error\\
 \cline{3-8}
 \multicolumn{2}{|c|}{SWIFT} & $-82$ & $102$ & $6.3 \cdot 10^{-14}$ & $-18$ & $64$ & $2.8 \cdot 10^{-12}$\\
\hline
 \hline
 \multicolumn{2}{|c|}{Put} & \multicolumn{3}{c||}{$T=5$, $m=4$} & \multicolumn{3}{c|}{$T=100$, $m=0$} \\
 \hline
 \multicolumn{2}{|c|}{COS} & \multicolumn{3}{c||}{$8.710270112152223$} & \multicolumn{3}{c|}{$0.27578574407931616$} \\
 \hline 
\multirow{4}{*}{\rotatebox{90}{SWIFT}} & $L$ & $l$ & $u$ & Abs error & $l$ & $u$ & Abs error\\
\cline{2-8}
%  & $8$ & $-83$ & $154$ & $6.68 \cdot 10^{-4}$ & $5$ & $72$ & $1.23 \cdot 10^{0}$\\
\cline{2-8}
& $10$ & $-112$ & $184$ & $5.9 \cdot 10^{-6}$ & $-4$ & $80$ & $1.2 \cdot 10^{-1}$\\
\cline{2-8}
% & $12$ & $-142$ & $213$ & $3.35 \cdot 10^{-8}$ & $-12$ & $88$ & $3.30 \cdot 10^{-3}$\\
\cline{2-8}
& $14$ & $-171$ & $243$ & $1.8 \cdot 10^{-10}$ & $-20$ & $96$ & $7.5 \cdot 10^{-6}$\\
\cline{2-8}
% & $16$ & $-201$ & $272$ & $7.35 \cdot 10^{-13}$ & $-28$ & $105$ & $2.03 \cdot 10^{-9}$\\
\cline{2-8}
& $18$ & $-230$ & $302$ & $3.1 \cdot 10^{-14}$ & $-37$ & $113$ & $2.5 \cdot 10^{-14}$\\
\cline{2-8}
\hline 
 \multicolumn{2}{|c|}{Damped} & $l$ & $u$ & Abs error & $l$ & $u$ & Abs error\\
 \cline{3-8}
 \multicolumn{2}{|c|}{SWIFT} & $-237$ & $63$ & $7.1 \cdot 10^{-15}$ & $-41$ & $22$ & $1.0 \cdot 10^{-16}$\\
 \hline 
\end{tabular}
\end{table}

\begingroup
\appendix
\section{Proofs of \cref{lem:fourier box value}, \cref{lem:four box assumption with strip payoff} and \cref{lem:four box assumption with strip density}}
\label{app:lem:fourier box value}
\begin{proof}[Proof of \cref{lem:fourier box value}]
Repeated integration by parts gives
\[
	\frac{1}{2\pi}\int_{|u| \ge r}\hat{h}(u) e^{\ii uy}\dd u
	=
	\sum_{n=0}^{N}
	\frac{\ii ^n}{2\pi \ii \, y^{n+1}}
	\left[\hat{h}^{(n)}(-r) e^{-\ii r y}-\hat{h}^{(n)}(r) e^{\ii r y}\right]
	+\mathcal R_N(r, y),
\]
where the remainder is
\[
	\mathcal R_N(r, y)
	=
	\frac{\ii^{N+1}}{2\pi y^{N+1}}
	\int_{|u| \ge r}\hat{h}^{(N+1)}(u) e^{\ii u y}\dd u,
\]
and hence satisfies \eqref{eq:four box remainder-bound}. Since $h$ is real-valued, it follows that $\overline{\ii^n \hat{h}^{(n)}(r)}=\ii^n \hat{h}^{(n)}(-r).$
We now substitute this symmetry into the boundary terms. If $z= \ii^n \hat{h}^{(n)}(r) e^{ir y}$ then
\begin{align*}
	\frac{\ii^n}{2\pi \ii  y^{n+1}}
	\left[\hat{h}^{(n)}(-r) e^{-\ii r y}-\hat{h}^{(n)}(r) e^{\ii r y}\right]
	&=\frac{1}{\pi y}\left(\frac{\bar{z}-z}{2\ii}\right) y^{-n}\\
    &=-\frac{1}{\pi y}\Im[z]\,  y^{-n} =
	-\frac{1}{\pi y}\Im\!\left[ \ii^{n} e^{\ii r y}\hat{h}^{(n)}(r) \right] y^{-n}.
\end{align*}
Summing over $n=0,\ldots,N$ proves \cref{lem:fourier box value}.
\end{proof}
\begin{proof}[Proof of \cref{lem:four box assumption with strip payoff}]
    Since $\alpha$ is in the interior of $\delta_h$, we can find a small $ \epsilon >0 $ such that $ e^{\epsilon|u|}h_\alpha(u) \in  L^{1}(\R) $.  Consequently, $ yh_\alpha(y)\in  L^{1}(\R) $ and $\hat{h}_\alpha \in C^1(\R)$. By the Lebesgue-Riemann lemma, we obtain that 
$$ \lim_{|u|\to \infty}\left|\hat{h}_\alpha^{(1)}(u)\right| = \lim_{|u|\to \infty}\left|\widehat{y h_\alpha (y)}(u)\right| =0,$$
and this shows that the first assumption of \cref{lem:fourier box value} is satisfied. Since $e^{\epsilon|u|}h_\alpha(u) \in  L^{1}(\R)$ we also have that $y^n h_\alpha(y)\in L^1(\R)$ for all $n\in \mathbb{N}$, and $\widehat{y h_{\alpha} (y)}(u):=\hat{g}(u)$ is smooth with derivatives vanishing at infinity. By the assumption $\alpha \in \delta_{h}\cap \delta_{h^{(1)}}\cap  \delta_{h^{(2)}}$, it follows that $g, g^{(1)}$ and $ g^{(2)}$ belong to $L^1(\R)$. Using integration by parts, we obtain that
$\hat{h}_\alpha^{(1)}(u) = - \ii \widehat{g}(u)  = \ii u^{-2} \widehat{g^{(2)}}(u).$
Since $g^{(2)}\in L^1(\R)$, by the Lebesgue-Riemann lemma $\widehat{g^{(2)}}(u)$ is bounded and this proves that $\hat{h}_\alpha^{(1)}\in L^1(|u|>r)$ for any $r>0$. Accordingly, the second assumption of \cref{lem:fourier box value} is also satisfied.
\end{proof}
\begin{proof}[Proof of \cref{lem:four box assumption with strip density}]
Since $\alpha$ is in the interior of $\delta_h$, we can find a small $ \epsilon >0 $ such that $ e^{\epsilon|u|}h_\alpha(u) \in  L^{1}(\R) $. It follows that $\hat{h}_\alpha$ can be extended to an analytic function in the strip $\{u\in \C:\, |\Im(u)|<\epsilon\}$. For every $u\in \R$ and $\eta <\epsilon$, by the Cauchy's integral formula, we get that 
\begin{equation*}
    \begin{aligned}
        \hat{h}_\alpha^{(n)}(u) = \frac{n!}{2\pi \ii}\int_{|w-u|=\eta} \frac{\hat{h}_\alpha(w)}{(w-u)^{n+1}} \dd w =\frac{n!}{2\pi \eta^{n}}\int_0^{2\pi} \hat{h}_\alpha ( u + \eta e^{\ii \theta}) e^{-\ii \theta n} \dd \theta.
    \end{aligned}
\end{equation*}
Accordingly, by Tonelli's theorem
\begin{equation*}
    \begin{aligned}
       \int_{-\infty}^{\infty} |\hat{h}_\alpha^{(n)}(u)|\dd u &\leq  \frac{n!}{2\pi \eta^{n}}\int_{-\infty}^{\infty}\int_0^{2\pi} |\hat{h}_\alpha ( u + \eta e^{\ii \theta})| \dd \theta \dd u\\
       & = \frac{n!}{2\pi \eta^{n}}\int_0^{2\pi} \int_{-\infty}^{\infty}|\hat{h}_\alpha ( u + \eta e^{\ii \theta})| \dd u \dd \theta \\
       & = \frac{n!}{2\pi \eta^{n}}\int_0^{2\pi} \int_{-\infty}^{\infty}|\hat{h}( u + \ii \alpha  +\eta e^{\ii \theta})| \dd u \dd \theta\\
       & \leq \frac{n!}{\eta^{n}}  \sup_{|w-\alpha|<\epsilon} \|\hat{h}(\cdot + \ii w)\|_1 <\infty.
    \end{aligned}
\end{equation*}
This shows that $\hat{h}_\alpha$ is smooth on the real line, that all of its derivatives are in $L^1(\R)$, and hence they vanish at infinity. Therefore, all the assumptions of \cref{lem:fourier box value}
are satisfied.
\end{proof}
\section{Proof of \cref{teo:estimate fat tails}}
\label{app:teo:estimate fat tails}
\begin{proof}
  %Since $0\in \delta_h$, then $h\in L^1(\R)$, and 
  As $h$ is piecewise $C^1$ and in $L^1(\R)$ then $I_{\infty}[\hat{h}](y)$ is well-defined (see \cite[Theorem 2.14]{jerri1992integral}).
Consider now the extension $ \hat{h} = Z + H$ defined on the simply connected open set $ \C \setminus \{  \gamma_{u_1}, \allowbreak \ldots,\allowbreak \gamma_{u_N} \} $, where $ u_1, \ldots  , u_N $ are the branch points of $Z$ of orders  $ \rho_1, \ldots , \rho_N $. We assume that $u_1, \ldots, u_N$ are algebraic and give insights on how to deal with logarithmic singularities when we deem it appropriate. Notice that, since $ h \in  L^{1}(\R) $ we have that $ \hat{h}(u) $ is defined for every $ u \in \R $, and hence there are no real branch points for $ Z $.
%BAD ALTERNATIVE
%\textcolor{red}{Since $\delta_h =(-a,b)$, then $\hat{h}$ is analytical in the strip $\{u\in \C:\, -a<\Im(u)<b\}$. In particular, if $\Im(u_n)>0$ then $\Im(u_n)\geq b$, and if $\Im(u_n)<0$ then $\Im(u_n)\leq -a$. Moreover,  we have that $\hat{h}$ has controlled growth and $\hat{h}(R+i\alpha)\to 0$ for $R\to \infty$ and $\alpha \in (-a,b)$. By the Phragmén-Lindelöf principle, then $\hat{h}$ uniformly to zero at infinity on the strip.}

We now aim to estimate $I_{\infty}[\hat{h}](y)$ via ``The Method of Steepest Descents'' (see \cite[Pages 137-140]{miller2006applied}).

%\footnote{How could it be done? If $ \Im(z_n)>0 $ then the branch cut of $ (u-u_n)^{-\rho_n} $ starts at $ z_n $ and is above the real line, as the one given in \eqref{eq:proof est fat tails:branch cut shape}. Define now $ (u-u_n)^{-\rho_n} $ on the domain given by \eqref{eq:proof est fat tails:branch cut shape}, which is always possible. On the real line, this latter function differs from the ``original'' $ (u-u_n)^{-\rho_n} $ by a multiplicative constant $ e^{-2\pi \ii k \rho_n} $, for some $ k \in \Z $. Accordingly, by multiplying the ``new" $ (u-u_n)^{-\rho_n} $ with a constant, we found again the ``original" function on the real line.}

From now on, we assume that $y>0 $ as the proof for $ y < 0 $ is analogous. Therefore, for the case $y>0$, we work only with the branch points $u_n$ with positive imaginary part. To estimate
$I_{\infty}[\hat{h}](y) $
 we consider the red path $ \gamma_R $, whose shape is shown in \cref{fig:proof fat tail estimation 2}. This path is divided into three main parts:
\begin{itemize}
  \item[\it a)] The real interval $ [-R,R] $.
  \item[\it b)] The segments
  \begin{align*}
  \{ R+ e^{\ii \theta } u :\, 0\leq u \leq \csc(\theta) R\}&\cup \{ u + \ii R :\, u\in \R, |u|\leq  R(1+\cot(\theta))\} \\
  & \cup \{ -R - e^{-\ii \theta } u :\, 0\leq u \leq \csc(\theta) R\},
  \end{align*}
where $ \theta \in  (0,\frac{\pi}{2}) $ is chosen according to assumption $ ii) $. We assume that $ R $ is sufficiently big so that no branch cut is crossed.
  \item[\it c)] For each $ n=1, \ldots , N$ such that $\Im(u_n)>0$, the U-shaped path $ U_n $ surrounding $ u_n $ at distance $ \epsilon= R^{-1}$ from  $ \gamma_{u_n} $.
\end{itemize}

Since $ \hat{h} $ is holomorphic inside $ \gamma_R $, we have $ \int_{\gamma_R} \hat{h}(u)e^{\ii uy}\dd u=0$,
and
$$I_{\infty}[\hat{h}](y)= - \frac{1}{2\pi}\lim_{R \to \infty}\int_{\gamma_R\setminus[-R,R]} \hat{h}(u)e^{\ii uy}\dd u.  $$
\fbox{ \it Estimation along part $ b) $:} \\

The integral along the pieces specified by $ b) $ is given by
\begin{align}
      e^{\ii\theta}\int_0^{R \csc(\theta)} \hat{h}(R + e^{\ii\theta} u)e^{\ii y(r+e^{\ii\theta} u)} \dd u &- \int_{-R(1+\cot(\theta))}^{R(1+\cot(\theta))}  \hat{h}(u+ \ii R)e^{\ii y(u+\ii R)} \dd u\nonumber\\
      &- e^{-\ii\theta}\int_{0}^{R\csc(\theta)} \hat{h}(-R - e^{-\ii\theta}u)e^{-\ii y(R +e^{-\ii\theta}u)} \dd u.\label{eq:proof fat tail: int part b)}
\end{align}
By the polynomial growth assumption, we have
$  |\hat{h}(u)| \leq C (1+|u|^{q}), $
and
\begin{align*}
    \left| \int_{-R(1+\cot(\theta))}^{R(1+\cot(\theta))}  \hat{h}(u+ \ii R)e^{\ii y(u+\ii R)} \dd u \right| &\leq C \int_{-R(1+\cot(\theta))}^{R(1+\cot(\theta))} (1+|u|^q + R^q) e^{-yR} \dd u \\
    &= 2C e^{-yR} \int_{0}^{R(1+\cot(\theta))} (1+|u|^q + R^q) \dd u,
\end{align*}
which tends to zero as the integral term has at most polynomial growth wrt $ R $. By assumption $ ii) $, the integrals along the lateral edges can be bounded uniformly wrt $R$. Since the supremum of $ \hat{h} $ on the sloped half-lines goes to zero, then by the dominated convergence theorem, these integrals tend to zero.\\

\begin{figure}[!ht]
\centering
\begin{tikzpicture}[
  scale=0.35,
  wall/.style={red, thick},
  flow/.style={red, thick, -{Latex[length=3mm]}},
  waveline/.style={blue, thick, decorate,
    decoration={snake, amplitude=2pt, segment length=6pt}}
]

% --- parámetros ---
\def\W{13}
\def\H{\W}
\def\r{0.6}
\def\s{2}   % apertura superior del trapecio

% posiciones de las U
\def\xA{1.5}
\def\xB{4.5}
\def\xC{7.5}
\def\xD{10}

% --- base ---
\draw[wall] (0,0)--(\W,0);

% --- laterales del trapecio ---
\draw[wall] (0,0)--(-\s,\H);
\draw[wall] (\W,0)--(\W+\s,\H);

% --- techo con huecos ---
\draw[wall] (-\s,\H)--(\xA,\H);
\draw[wall] (\xA+2*\r,\H)--(\xB,\H);
\draw[wall] (\xB+2*\r,\H)--(\xC,\H);
\draw[wall] (\xC+2*\r,\H)--(\xD,\H);
\draw[wall] (\xD+2*\r,\H)--(\W+\s,\H);

% --- etiquetas ---
\node[below=4pt] at (0,0) {$-R$};
\node[below=4pt] at (\W,0) {$R$};

\node[above=1pt, xshift=3pt]  at (-\s,\H) {$-R -R \cot(\theta) + R\,\mathrm{i}$};
\node[above=1pt, xshift=-3pt] at (\W+\s,\H) {$R + R \cot(\theta)+ R\,\mathrm{i}$};

% flechas inferiores
\foreach \k in {0.3,2.2}{
  \pgfmathsetmacro{\x}{\k*\W/3}
  \draw[flow] (\x-1.2,0)--(\x+1.2,0);
}

% --- macro U ---
\newcommand{\Ushape}[2]{%
  % paredes de la U
  \draw[wall] (#1,\H)--(#1,#2);
  \draw[wall] (#1+2*\r,\H)--(#1+2*\r,#2);

  % arco inferior
  \draw[wall] (#1,#2) arc(180:360:\r);

  % punto negro
  \pgfmathsetmacro{\px}{#1+\r}
  \pgfmathsetmacro{\py}{#2}
  \fill (\px,\py) circle(2pt);

  % línea ondulada
  \draw[waveline] (\px,\H)--(\px,\py);

  % flechas verticales
  \pgfmathsetmacro{\mid}{(#2+\H)/2}
  \draw[flow] (#1,\mid-0.8)--(#1,\mid);
  \draw[flow] (#1+2*\r,\mid+0.8)--(#1+2*\r,\mid);

  % flecha epsilon — corregida
  \draw[<->] (\px,\py) -- (\px+\r,\py)
     node[midway, above=2pt] {$\epsilon$};
}

% --- dibujar las U ---
\Ushape{\xA}{3}
\Ushape{\xB}{6}
\Ushape{\xC}{4.5}
\Ushape{\xD}{3}

\end{tikzpicture}
\caption{Integration path in the upper half-plane. The black points are the branch points contained in $ \{ \Im(u)>0 \} $. The waved-blue lines represent branch cuts of $ \hat{h} $.}
\label{fig:proof fat tail estimation 2}
\end{figure}

\fbox{\it Part $ c) $: Branch points}\\

Now we aim to show that for all $U_n$ with $\Im(u_n)>0$, we can find $ C>0 $ such that
\begin{equation*}
%\label{eq:proof est fat tails: contr alg sing to prove}
    \left| \lim_{R\to \infty}\int_{U_n} \hat{h}(u)e^{\ii uy} \dd u\right| \leq  \frac{C}{\Gamma(\rho_n)} y^{\Re(\rho_n)-1} e^{-\Im(u_n)y}
\end{equation*}
when $ y>1 $. We outline that this estimate holds for all $ \rho_n \in \C \setminus \{ 0 \}$ when $ u_n $ is an algebraic branch point. In the logarithmic case, it remains valid with $\rho_n$ being a nonpositive integer.

Firstly, let $ p_n = \max(0,\lfloor \Re(\rho_n) \rfloor) $. Notice that if $ u \in  U_n $ then $\hat{h} $ is analytic in $ B(u,\epsilon=R^{-1}) $ and by the Cauchy's estimates we find that
$$ |\hat{h}^{(k)}(u)| \leq k! R^{k} \sup_{|s|\leq R^{-1}}|\hat{h}(u+s)|.$$
By the polynomial growth for $ \hat{h} $ we have
$$ |\hat{h}^{(k)}(u)| \leq  C k! R^{k} (|u|^{q}+ R^{-q}) \leq C k! R^{k} (|u|^{q}+ 1),$$
and a similar bound along $ U_n $ holds for
$ \widetilde{g}(u):=\hat{h}(z)(u-u_n)^{\rho_n}$, and its derivatives. In particular, if $ u \in  \partial U_n = \{ u_n+\epsilon(R) + \ii R, u_n-\epsilon(R) + \ii R \}$ then for every $ k \in  \mathbb{N} $
\begin{equation}
  \label{eq:proof fat tail est: decay g tilde boundary}
  |\widetilde{g}^{(k)}(u)e^{\ii uy}|\leq C k! R^{k}(R^{q}+1) e^{-yR} \xrightarrow{R\to \infty}0 , \qquad  u \in  \partial U_n.
\end{equation}

\fbox{\it Integration by parts:}\\

If $ p_n >0$, along $ U_n $ we have
  $$  \int_{U_n} \hat{h}(u)e^{\ii uy}\dd u = \int_{U_n} \widetilde{g}(u){(u-u_n)^{-\rho_n}}e^{\ii uy}\dd u.$$
Integrating by parts, we obtain
\begin{align*}
  &\int_{U_n} \widetilde{g}(u){(u-u_n)^{-\rho_n}}e^{\ii uy}\dd u=\frac{1}{\rho_n-1}\Bigg(-\int_{\partial U_n} {(u-u_n)^{-\rho_n+1}}\widetilde{g}(u)e^{\ii uy} \dd\sigma(u) \\
  &\hspace{6cm} + \int_{U_n} (u-u_n)^{-\rho_n+1} \frac{\mathrm{d}}{\mathrm{d}u}(\widetilde{g}(u)e^{\ii uy}) \dd u\Bigg).
    %\\
    %&= \sum_{0\leq k<p_n}(-1)^{k}  \int_{\partial U_n} (u-u_n)^{k+1}\frac{\mathrm{d}^k}{\mathrm{d}z^k}\left(\hat{f}(u)e^{iux}\right) d\sigma(u) \\
    %&+ (-1)^{p_n}\int_{U_n} (u-u_n)^{p_n} \frac{\mathrm{d}^{p_n}}{\mathrm{d}z^{p_n}}\left(\hat{f}(u)e^{iux}\right) du
\end{align*}
By \eqref{eq:proof fat tail est: decay g tilde boundary}, the integrand vanishes exponentially on the boundary and
$$ \lim_{R\to \infty} \int_{U_n} \hat{h}(u)e^{\ii uy}\dd u =\frac{1}{\rho_n-1} \lim_{R\to \infty}\int_{U_n} (u-u_n)^{-\rho_n+1} \frac{\mathrm{d}}{\mathrm{d}u}(\widetilde{g}(u)e^{\ii uy}) \dd u .$$
We can repeat the integration by parts $ p_n$ times to obtain
\begin{equation*}
    \lim_{R\to \infty} \int_{U_n} \hat{h}(u)e^{\ii uy}\dd u =\frac{\Gamma(\rho_n-p_n)}{\Gamma(\rho_n)}\lim_{R\to \infty}\int_{U_n} (u-u_n)^{-\rho_n+p_n} \frac{\mathrm{d}^{p_n}}{\mathrm{d}u^{p_n}}(\widetilde{g}(u)e^{\ii uy}) \dd u.
\end{equation*}
Moreover,
$$ \frac{\mathrm{d}^{p_n}}{\mathrm{d}u^{p_n}}(\widetilde{g}(u)e^{\ii uy})=\left(\sum_{j=0}^{p_n} \binom{p_n}{j} \widetilde{g}^{(p_n-j)}(u)(\ii y)^{j}\right) e^{\ii uy},$$
and hence
\begin{align}
    &\lim_{R\to \infty} \int_{U_n} \hat{h}(u)e^{\ii uy}\dd u = \nonumber\\
&\hspace{1cm}\frac{\Gamma(\rho_n-p_n)}{\Gamma(\rho_n)}\sum_{j=0}^{p_n} \binom{p_n}{j} (\ii y)^{j}  \left(\lim_{R\to \infty}\int_{U_n} (u-u_n)^{-\rho_n+p_n} \widetilde{g}^{(p_n-j)}(u)e^{\ii uy} \dd u\right).\label{eq:proof est fat tails:overall contribution}
\end{align}
Our next step will be the estimation of each
\begin{equation}
  \label{eq:proof est fat tails:algebraic sing contribution}
\lim_{R\to \infty}\int_{U_n} (u-u_n)^{-\rho_n+p_n} \widetilde{g}^{(p_n-j)}(u)e^{\ii uy} \dd u.
\end{equation}
To this aim, consider the function $ \widetilde{g}(u)= \hat{h}(u)(u-u_n)^{\rho_n} $. By assumption, we have that there is an entire function $ H $ such that $ \hat{h}-H = Z$ has a branch point of order $ \rho_n $ at $ u_n $. As a consequence, we can find $ \delta_n>0 $ such that
$$ \widetilde{g}(u)=\sum_{m=0}^{\infty} a_m (u-u_n)^{m} + \sum_{m=0}^{\infty} b_m (u-u_n)^{m+\rho_n} , \qquad  u \in  B(u_n,\delta_n).$$
Deriving these series, we find that for $ 0\leq j \leq  p_n $, the derivative $  \widetilde{g}^{(j)}(u) $ has the form
$$  \widetilde{g}^{(j)}(u)=\sum_{m=0}^{\infty} \tilde{a}_{m+j} (u-u_n)^{m} + \sum_{m=0}^{\infty} \tilde{b}_m (u-u_n)^{m+\rho_n- j} , \qquad  u \in  B(u_n,\delta_n), $$
and
\begin{align*}
(u-u_n)^{-\rho_n+p_n}\widetilde{g}^{(j)}(u)=&(u-u_n)^{-\rho_n+p_n} \sum_{m=0}^{\infty} \tilde{a}_{m+j} (u-u_n)^{m} \\
& + \sum_{m=0}^{\infty} \tilde{b}_m (u-u_n)^{m- j+ p_n} := (u-u_n)^{-\rho_n+p_n} g(u) + t(u),
\end{align*}
where $ g $ and $ t $ are both analytic in $ B(u_n,\delta_n) $.
Namely, we estimate \eqref{eq:proof est fat tails:algebraic sing contribution} assuming that it has the form
$$\lim_{R\to \infty}\int_{U_n} ((u-u_n)^{-\rho_n}g(u) + t(u))e^{\ii uy} \dd u $$
with $ \Re(\rho_n)< 1 $, the integrand having polynomial growth and $g,t$ being analytic in a neighborhood containing $ U_n $ and $ u_n $. Notice that in the logarithmic case, no integration by parts is required and $(u-u_n)^{-\rho_n}g(u)$ is replaced by $(u-u_n)^{-\rho_n} \log(u-u_n)g(u)$, with $\rho_n$ being a nonpositive integer.
\\

\fbox{\it Case: $ \Re(\rho_n)<1 $}\\

The distance between $ U_n $ and $ \gamma_{u_n}$ is given by $ \epsilon=R^{-1} $. Since
$$ U_n= [u_n + \epsilon + \ii R, u_n + \epsilon] \cup \{ u_n + \epsilon e^{-\ii\theta}:\, \theta \in  [0,\pi] \} \cup [u_n - \epsilon, u_n - \epsilon+ \ii R],$$
we have
\begin{align*}
    & \int_{U_n} \left((u-u_n)^{-\rho_n}g(u)+ t(u)\right)e^{\ii uy} \dd u = \ii e^{\ii y u_n} \Bigg{(} \\
    & \hspace{1.4cm }e^{\ii y\epsilon}\int_R^{0} \left((\epsilon + \ii u)^{-\rho_n}g(u_n + \epsilon+ \ii u)+ t(u_n + \epsilon+ \ii u)\right)e^{-uy}\dd u \\
    %& \hspace{1cm } -\epsilon^{1-\rho_n} \int_0^{\pi} \left( e^{\ii\theta \rho_n} g(u_n + \epsilon e^{-\ii\theta})+ t(u_n + \epsilon e^{-\ii\theta})\right)e^{\ii y \epsilon(\cos \theta - \ii \sin \theta)}\dd u \\
    & \hspace{1cm } - \epsilon \int_0^{\pi} \left( \epsilon^{-\rho_n} e^{\ii\theta \rho_n} g(u_n + \epsilon e^{-\ii\theta})+ t(u_n + \epsilon e^{-\ii\theta})\right)e^{-\ii\theta + \ii y \epsilon(\cos \theta - \ii \sin \theta)}\dd \theta \\    
    & \hspace{1cm}+ e^{-\ii y\epsilon} \int_0^{R}\left((\ii u-\epsilon)^{-\rho_n}g(u_n - \epsilon+ \ii u) + t(u_n - \epsilon+ \ii u)\right) e^{-uy} \dd u \Bigg{)}.
\end{align*}
As $ \Re(\rho_n)<1 $, the integral $\epsilon \int_0^\pi$ tends to zero as $\epsilon\to 0$.
%%\footnote{Notice that we are neglecting when $ \alpha_n $ is purely imaginary. In this case the term $ x^{\alpha-1} $ is just substituted by a constant. I skipped this detail.} then $ \epsilon^{1-\alpha_n}\to 0 $ as $ \epsilon\to 0$, while the argument in the second integral is bounded. Accordingly
%$$  \epsilon^{1-\rho_n} \int_0^{\pi} \left( e^{\ii\theta \rho_n} g(u_n + \epsilon e^{-\ii\theta})+ t(u_n + \epsilon e^{-\ii\theta})\right)e^{\ii y \epsilon(\cos \theta - \ii \sin \theta)}\dd u \xrightarrow{\epsilon\to 0}0.$$
%$$- \epsilon \int_0^{\pi} \left( \epsilon^{-\rho_n} e^{\ii\theta \rho_n} g(u_n + \epsilon e^{-\ii\theta})+ t(u_n + \epsilon e^{-\ii\theta})\right)e^{-\ii\theta + \ii y \epsilon(\cos \theta - \ii \sin \theta)}\dd \theta
% \xrightarrow{\epsilon\to 0}0.$$
%NEW VERSION STARTS HERE 
To estimate the integral $\int_0^R$, we split the interval as $\int_0^\delta + \int_\delta^R$, for $\delta >0$. Since the integrand has at most polynomial growth, we have
\begin{equation*}
    \begin{aligned}
      \Bigg{|}\int_\delta^{R}\Big{(}( \ii u&- \epsilon)^{-\rho_n}g(u_n - \epsilon + \ii u) + t(u_n- \epsilon + \ii u)\Big{)}e^{-uy}\dd u \Bigg{|}\leq\\
      &\leq 
       \int_\delta^{\infty}\left|( \ii u- \epsilon)^{-\rho_n}g(u_n - \epsilon + \ii u)+ t(u_n- \epsilon + \ii u)\right|e^{-uy}\dd u  \\
      &\leq C \int_\delta^\infty (1+|\epsilon|^q + |u_n|^q + |u|^q)e^{-u y}\dd u \leq C \int_\delta^\infty (1 + |u|^q)e^{-u y}\dd u\\
      &= C e^{-\delta y} \int_0^\infty (1+|w+ \delta|^q ) e^{-wy}\dd w  \leq C e^{-\delta y} \int_0^\infty (1+|w|^q+ |\delta|^q ) e^{-wy}\dd w \\
      &\leq C y^{-1} e^{-\delta y} (1+ y^{-q} + \delta^q),
    \end{aligned}
\end{equation*}
and as $\delta>0$ can be arbitrarily big, this term is $o(y^{-n})$ for every $n\geq 0$. The same holds for the integral $\int_R^0$ where the function $(u-u_n)^{-\rho_n} g(u) + t(u)$ is evaluated on the other side of the branch cut. 
Consequently, it remains to study the integral $\int_0^\delta$ with the integrand being evaluated on both sides of the branch cut. 
For $R \to \infty$ we have that $\epsilon\to 0 $, and $\ii e^{\ii y u_n}(\int_{\delta}^0 + \int_{0}^\delta)$ tends to
\begin{equation*}
  \ii e^{\ii y u_n} \left( \int_0^{\delta} \left[(\ii u)_-^{-\rho_n}- (\ii u)_+^{-\rho_n}\right] g(u_n + \ii u)e^{-uy} \dd u  \right).
\end{equation*}
Notice that, by continuity, we have  $ t(u_n+\ii u_-)-t(u_n+\ii u_+)=0 $.
The terms in the difference $ (\ii u)_-^{-\rho_n}- (\ii u)_+^{-\rho_n} $ are taken from two sides of the branch cut (see \cite[Sec. 4.8]{miller2006applied}). Therefore, there is some $ k \in \Z $ such that
\begin{align*}
  \ii e^{\ii y u_n} &\left( \int_0^{\delta} \left[(\ii u)_-^{-\rho_n}- (\ii u)_+^{-\rho_n}\right] g(u_n + \ii u)e^{-uy} \dd u  \right) \\
    &= \ii e^{\ii y u_n} e^{2\pi \ii k \rho_n} \left(e^{-\ii\pi\rho_n}- e^{\ii \pi \rho_n}\right)\int_0^{\delta} u^{-\rho_n}g(u_n + \ii u)e^{-uy}\dd u\\
    &= e^{\ii y u_n} e^{2\pi \ii k \rho_n}2 \sin(\pi \rho_n) \int_0^{\delta} u^{-\rho_n}g(u_n + \ii u)e^{-uy}\dd u.
\end{align*}
This last integral can be estimated as in the proof of Watson's Lemma (see \cite[Eq. (2.11)-2.12]{miller2006applied}) that gives us that for $ y>1 $
\begin{equation}
\label{eq:proof fat tail est: contr watson lemma}
\int_0^{\delta} u^{-\rho_n}g(u_n + \ii u)e^{-uy}\dd u = g(u_n)\Gamma(1-\rho_n)y^{\rho_n-1} + o(y^{\rho_n-1}),
\end{equation}
where
$|o(y^{\rho_n-1})|\leq C y^{\Re(\rho_n)-2}$, $y>1$.
As a result, we can find $ C>0 $ such that, for $ y>1 $,
\begin{align}
    \left| \lim_{R\to \infty}\int_{U_n} (u-u_n)^{-\rho_n}g(u)e^{\ii uy} \dd u\right|&\leq C e^{-\Im(u_n)y} |\sin(\pi \rho_n)\Gamma(1-\rho_n)| y^{\Re(\rho_n)-1}\nonumber\\
    &= C e^{-\Im(u_n)y} y^{\Re(\rho_n)-1} \frac{\pi}{|\Gamma(\rho_n)|}.\label{eq:proof fat tail est: contr alg sin gamma function}
\end{align}
For logarithmic branch points, as $\rho_n$ is an integer, the difference between the two sides of the branch cut of $(\ii u)^{-\rho_n} \log(\ii u)$ is due only to the logarithm. Therefore, this difference is equal to $2\pi \ii (\ii)^{-\rho_n}|u|^{-\rho_n}  $ and can be studied as in the algebraic case.\\

\fbox{\it Overall contribution:}\\

By combining \eqref{eq:proof est fat tails:overall contribution} with the last estimation \eqref{eq:proof fat tail est: contr alg sin gamma function}, we find that
\begin{align*}
    \left| \lim_{R\to \infty}\int_{U_n} \hat{h}(u)e^{\ii uy} \dd u\right| &\leq C \frac{ e^{-\Im(u_n)y}}{|\Gamma(\rho_n)|} y^{\Re(\rho_n)-p_n-1} \underbrace{\sum_{j=0}^{p_n}\binom{p_n}{j} y^{j}}_{=(1+y)^{p_n}} \\
    &\leq \frac{C}{|\Gamma(\rho_n)|} y^{\Re(\rho_n)-1} e^{-\Im(u_n)y}, \qquad  y>1.
\end{align*}
By summing these estimates over all the singularities of $ \hat{h} $ in the upper half-plane, we obtain that
\begin{equation*}
  \left|\frac{1}{2\pi}\int_{-\infty}^{\infty}\hat{h}(u)e^{\ii uy}\dd u\right|\leq  C \left( \sum_{\Im(u_n)>0} y^{\Re(\rho_n)-1} e ^{-\Im(u_n)y}\right), \qquad  y>1,
\end{equation*}
hence, the statement in \eqref{eq:est fourier inversion}  follows by assumption $iii)$ of this theorem. %expressed in \eqref{eq:assumption singularities position}.
%The estimation for $|f(x)|$ in Equation \eqref{eq:estimate fat tails} is finally obtained by letting $r\to \infty$ \textbf{and requires assumptions that make the Fourier inversion formula true} (e.g. $f$ continuous and with piecewise continuous derivatives.)
\end{proof}

\begin{remark}
\cref{teo:estimate fat tails} still holds when the extension of $\hat{h}$ presents two branch points lying on the same branch cut. In this case, the contribution contained in \eqref{eq:proof fat tail est: contr watson lemma} has to be considered for both branch points.
\end{remark}

\endgroup

\section*{Acknowledgments}
J. G. L\'opez-Salas acknowledges the support received from CITIC for a research stay at Utrecht University. CITIC, as a center accredited for excellence within the Galician University System and a member of the CIGUS Network, receives subsidies from the Department of Education, Science, Universities, and Vocational Training of the Xunta de Galicia. Additionally, it is co-financed by the EU through the FEDER Galicia 2021-27 operational program (Ref. ED431G 2023/01).

\bibliographystyle{siamplain}
\bibliography{references}

\newpage

\thispagestyle{empty}
\setcounter{page}{1}
\setcounter{figure}{0}
\renewcommand{\thefigure}{SM\arabic{figure}}
\renewcommand{\figurename}{Fig.}

\headers{Trevisani, L\'opez-Salas, Ben Hammouda and Oosterlee}{SUPPLEMENTARY MATERIAL: DAMPED SWIFT }

\begin{center}
\textbf{SUPPLEMENTARY MATERIAL: A DAMPED SWIFT METHOD FOR EUROPEAN OPTION PRICING: COEFFICIENTS DECAY, TRUNCATION, AND ERROR ANALYSIS}
\end{center}

\vspace{1em}

\setcounter{section}{0}
\renewcommand{\thesection}{SM\arabic{section}}

\section{Alternative assumptions for $f_\alpha$ and $P_\alpha$}\label{sm:proof Parseval shannon}
As mentioned in the article, the $L^2$ condition in Assumption \ref{ass:noempty strip intersection} is required
only for simplicity. To pass from the physical space to the Fourier representation of the Fourier coefficients, one can directly apply the following proposition to $f_\alpha$ and $P_\alpha$. The proof below takes advantage of the Fourier inversion formula for piecewise $C^1$ functions, and the fact that the FT of the Shannon basis function has compact support.
 \begin{proposition}
  \label{prop: Parseval identity shannon wavelets}
  Let $ h \in L^{1}(\R) $ such that $ h $ and its first derivative are piecewise continuous. Then for $ m,k \in  \Z $, we have
  \begin{equation*}
    %\label{eq:parseval shannon wavelet}
    \int_{\R} h(y)\phi_{m,k}(y)\, \dd y= \frac{1}{2\pi} \int_{\R} \hat{h}(u) \overline{\hat{\phi}_{m,k}(u)}\, \dd u.
  \end{equation*}
\end{proposition}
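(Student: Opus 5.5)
The plan is to avoid any $L^2$ machinery and use only the compact support of $\hat{\phi}_{m,k}$ together with the pointwise Fourier inversion formula for piecewise $C^1$ integrable functions (as in \cite[Theorem 2.14]{jerri1992integral}). Since $\hat{\phi}_{m,k}(u)=2^{-m/2}e^{-\ii k u/2^m}\mathbf 1_{[-2^m\pi,2^m\pi]}(u)$ is bounded and compactly supported, and $\hat{h}$ is bounded and continuous because $h\in L^1(\R)$, the right-hand side is a proper integral over a bounded interval,
\[
\frac{1}{2\pi}\int_{\R}\hat{h}(u)\overline{\hat{\phi}_{m,k}(u)}\dd u=\frac{2^{-m/2}}{2\pi}\int_{-2^m\pi}^{2^m\pi}\hat{h}(u)e^{\ii k u/2^m}\dd u .
\]
Here I insert $\hat{h}(u)=\int_\R h(y)e^{-\ii u y}\dd y$ and use Fubini, which is legitimate because $|h(y)|\mathbf 1_{[-2^m\pi,2^m\pi]}(u)$ is integrable on $\R\times\R$. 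The inner $u$-integral is elementary:
\[
\frac{2^{-m/2}}{2\pi}\int_{-2^m\pi}^{2^m\pi}e^{\ii u(k/2^m-y)}\dd u=2^{m/2}\operatorname{sinc}(2^m y-k)=\phi_{m,k}(y),
\]
including at $y=k/2^m$ by continuity. This already yields $\int_\R h\,\phi_{m,k}\dd y$, since $h\in L^1$ and $\phi_{m,k}\in L^\infty$ make the left-hand side absolutely convergent.

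This direct Fubini argument seems to need only $h\in L^1(\R)$. The piecewise regularity hypothesis is what the authors apparently intended to use, via the inversion formula, so I will also sketch the alternative route in case the direct one is deemed insufficient. Write $\phi_{m,k}(y)=\frac{1}{2\pi}\int_\R\hat{\phi}_{m,k}(u)e^{\ii uy}\dd u$, which holds pointwise since $\hat{\phi}_{m,k}\in L^1$. Substitute this into $\int h\phi_{m,k}$ and swap the integrals, again by Fubini on $|h(y)||\hat{\phi}_{m,k}(u)|$. Then recognize $\int h(y)e^{\ii uy}\dd y=\hat{h}(-u)$ and change variables $u\mapsto -u$. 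The identity $\hat{\phi}_{m,k}(-u)=\overline{\hat{\phi}_{m,k}(u)}$ holds because $\phi_{m,k}$ is real-valued, so this produces $\overline{\hat{\phi}_{m,k}(u)}$.

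The only delicate point I anticipate is justifying the interchange of integrals and the pointwise identity for the sinc kernel. Both are handled by the compact support of $\hat{\phi}_{m,k}$: integrability of the product is immediate, and no truncation or limit argument ($\lim_{R\to\infty}$ of $I_R$) is needed. I would therefore present the Fubini argument as the main proof and remark that the piecewise $C^1$ assumption is only needed if one wishes to interpret the Fourier side through pointwise inversion of $h$ itself.
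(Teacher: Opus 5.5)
Your argument is correct, and it takes a genuinely different and simpler route than the paper's.

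The paper works from the physical side, in four steps:
\begin{enumerate}
\item It uses the pointwise Fourier inversion formula for $h$. This is the only place where the piecewise regularity hypothesis enters.
\item It writes $\int_\R h\,\phi_{m,k}\dd y$ as $\frac{1}{2\pi}\lim_{R\to\infty}\int_\R\bigl(\int_{-R}^R\hat h(u)e^{\ii uy}\dd u\bigr)\phi_{m,k}(y)\dd y$.
\item It inserts a Gaussian damping $e^{-\pi\epsilon^2y^2}$, so that Fubini can be applied even though $\phi_{m,k}\notin L^1(\R)$.
\item It identifies the Fourier transform of the damped wavelet, via the convolution theorem, as a mollification of $\hat\phi_{m,k}$, and then lets $\epsilon\to0$ and $R\to\infty$.
\end{enumerate}

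You work from the Fourier side instead. Because $\hat\phi_{m,k}$ is bounded with compact support, $|h(y)|\mathbf 1_{[-2^m\pi,2^m\pi]}(u)$ is integrable on $\R^2$. A single application of Fubini then suffices, together with the explicit evaluation of $\frac{2^{-m/2}}{2\pi}\int_{-2^m\pi}^{2^m\pi}e^{\ii u(k/2^m-y)}\dd u$ as $\phi_{m,k}(y)$; your normalization checks out. This is just the classical multiplication formula for $L^1$ functions.

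Your route buys generality and robustness.
\begin{itemize}
\item It proves the identity for every $h\in L^1(\R)$, so the piecewise regularity assumption is unnecessary.
\item This fits the stated purpose of that supplementary section, which is to dispense with the $L^2$ requirement for densities such as VG with non-square-integrable blow-ups.
\item It needs no regularization and no limit interchanges. In particular, it avoids passing $\lim_{R\to\infty}$ through the $y$-integral, a step the paper asserts without exhibiting a dominating function.
\end{itemize}

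For this proposition the paper's route offers no real advantage, beyond reusing the pointwise-inversion framework the authors rely on elsewhere.
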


 \begin{proof}
    As $ h \in L^{1}(\R) $ and $ \phi_{m,k} \in  L^{\infty}(\R) $, we have $ \int_{\R} |h(y)\phi_{m,k}(y)|\, \dd y< \infty$. Since $ h $ and its derivative are piecewise continuous, by \cite[Theorem 2.14]{jerri1992integralSM}, the Fourier inversion formula holds almost everywhere for $ h $. As a result, we have
\begin{align*}
    \int_{\R} h(y)\phi_{m,k}(y)\, \dd y &= \frac{1}{2\pi} \int_{\R} \left(\int_{\R} \hat{h}(u) e^{\ii u y}\, du\right) \phi_{m,k}(y)\, \dd y\\
    & = \frac{1}{2\pi} \lim_{R\to \infty}\int_{\R} \left(\int_{-R}^{R} \hat{h}(u) e^{\ii u y}\, \dd u\right) \phi_{m,k}(y)\, \dd y,
\end{align*} 
and by the dominated convergence theorem
\begin{equation*}
\int_{\R} \left(\int_{-R}^{R} \hat{h}(u) e^{\ii u y}\, \dd u\right) \phi_{m,k}(y)\, \dd y = \lim_{\epsilon \to 0} \int_{\R} \left(\int_{-R}^{R} \hat{h}(u) e^{\ii u y}\, \dd u\right)e^{- \pi \epsilon^{2}y^{2}} \phi_{m,k}(y) \dd y.
\end{equation*}
Since $ \hat{h} $ is continuous, $ \hat{h}(u)e^{-\pi \epsilon^{2}y^{2}}\phi_{m,k}(y) $ is integrable for $ (u,y) \in  [-R,R]\times \R $, and so we apply Fubini's theorem to obtain
\begin{align*}
    \int_{\R} \left(\int_{-R}^{R} \hat{h}(u) e^{\ii u y}\, \dd u\right) \phi_{m,k}(y)\, \dd y &= \lim_{\epsilon \to 0} \int_{-R}^{R} \left(\int_{\R} e^{- \pi \epsilon^{2}y^{2}} \phi_{m,k}(y) e^{\ii u y}\, \dd y\right) \hat{h}(u)\dd u\\
    & = \lim_{\epsilon \to 0} \int_{-R}^{R} \hat{h}(u) \overline{\hat{g}_{\epsilon} (u)} \, \dd u ,
\end{align*}
where $ \hat{g}_\epsilon $ is the Fourier transform of $y\mapsto e^{- \pi \epsilon^{2}y^{2}} \phi_{m,k}(y)$. As both $  e^{- \pi \epsilon^{2}y^{2}}$ and $\phi_{m,k}  $ belong to $ L^2(\R) $, the convolution theorem yields
\begin{align*}
   \hat{g}_\epsilon(u) &= (2\pi)^{-1} \left(\widehat{e^{-\pi \epsilon^{2}y^{2}}} * \hat{\phi}_{m,k}(y)\right)(u) =\\
   & = (2\pi)^{-1} 2^{-\frac{m}{2}} \left(\widehat{e^{-\pi \epsilon^{2}y^{2}}}  * \, e^{-\ii \frac{k}{2^{m}}y}\mathbf{1}_{[-2^{m}\pi,2^{m}\pi] }(y)
   \right)(u).
\end{align*}
Notice that 
$\widehat{e^{-\pi \epsilon^{2}y^{2}}}(u) = \epsilon^{-1} e^{-\frac{u^{2}}{4\pi\epsilon^{2}}} $ and that $  \epsilon^{-1}\int_{\R} e^{-\frac{u^{2}}{4\pi\epsilon^{2}}} \, \dd u = \int_{\R} e^{-\frac{y^{2}}{4\pi}} \dd y = 2\pi$. Namely, if we consider the mollifier $ \varphi(y)=(2\pi)^{-1} e^{-\frac{y^{2}}{4\pi}}  $ then
$$ \hat{g}_{\epsilon}(u) = 2^{-\frac{m}{2}} \epsilon^{-1} \left(\varphi(y/\epsilon)* \,e^{-\ii \frac{k}{2^{m}}y}\mathbf{1}_{[-2^{m}\pi,2^{m}\pi] }(y)\right)(u),$$
and for $ \epsilon\to 0 $ this sequence converges to $  2^{-\frac{m}{2}} e^{-\ii \frac{k}{2^{m}}u} \mathbf{1}_{[-2^{m}\pi,2^{m}\pi]}(u)= \hat{\phi}_{m,k}(u)$ for almost every $ u \in \R $. Finally, this shows that
\begin{align*}
     &\frac{1}{2\pi} \int_{\R} \left(\int_{-R}^{R} \hat{h}(u) e^{\ii u y}\, \dd u\right) \phi_{m,k}(y)\, \dd y = \lim_{\epsilon \to 0} \frac{1}{2\pi}\int_{-R}^{R} \hat{h}(u) \overline{\hat{g}_{\epsilon} (u)} \, \dd u  \\
     &=  \frac{1}{2\pi}\int_{-R}^{R} \hat{h}(u) \, \lim_{\epsilon \to 0}  \overline{\hat{g}_{\epsilon} (u)} \, \dd u =\frac{2^{-\frac{m}{2}}  }{2\pi} \int_{-R}^{R} \hat{h}(u) e^{\ii \frac{k}{2^{m}}u}\mathbf{1}_{[-2^{m}\pi,2^{m}\pi]}(u)\, \dd u,
\end{align*}
and since the support of the integrand is bounded, we can let $ R\to \infty $ and obtain the final statement.
\end{proof}

\section{Proofs of Proposition \ref{prop:GBM tail bound} and Corollary \ref{cor:gbm-decay-}} \label{sm:proofsLightTail}

\begin{proof}[Proof of Proposition \ref{prop:GBM tail bound}]
        We write
    $$I_r[\hat{h}](y) = \frac{1}{2\pi}\int_{-\infty}^\infty \hat{h}(u)e^{\ii y u} \dd u - \frac{1}{2\pi}\int_{|u|\ge r} \hat{h}(u)e^{\ii y u} \dd u, $$
    and study the modulus of each term separately. Since $\hat{h}$ is entire, by Cauchy estimates, assumption \eqref{eq:gaussian-growth-assumption} holds for all derivatives of $\hat{h}$. As a result, $\hat{h}$ verifies the assumptions of Lemma \ref{lem:fourier box value} and 
    $$\left|\frac{1}{2\pi}\int_{|u|\ge r} \hat{h}(u)e^{\ii y u} \dd u\right|\leq \frac{1}{\pi}\, \left| \Im\left[e^{\ii r  y}\hat{h}(r)\right]\right| |y|^{-1} + O(y^{-2}), \qquad y\neq 0. $$
    To estimate the integral in $[-\infty,\infty]$, consider $ \eta \in \R $ and the path
$[-\infty,\infty]\cup [\infty, \infty + \ii\eta]\cup [\infty+\ii\eta,-\infty+\ii\eta]\cup [-\infty+ \ii \eta,-\infty].$
As $ \hat{h} $ is analytic and vanishes at infinity, Cauchy's theorem implies that
\begin{align*}
\frac{1}{2\pi} \int_{-\infty}^{\infty} \hat{h}(u) e^{\ii u y} \dd u&=\frac{1}{2\pi} \int_{-\infty+\ii\eta}^{\infty+\ii\eta}\hat{h}(u) e^{\ii u y} \dd u. 
\end{align*}
Now taking the modulus and using the bound \eqref{eq:gaussian-growth-assumption} in the hypothesis we find that
\begin{align*}
\frac{1}{2\pi} \left|\int_{-\infty+\ii\eta}^{\infty+\ii\eta}\hat{h}(u) e^{\ii u y} \dd u \right|&= \frac{1}{2\pi} e^{-\eta y}\left|\int_{-\infty}^{\infty}\hat{h}(u+\ii\eta) e^{\ii u y} \dd u\right| \\
%&\leq \frac{1}{2\pi} e^{-\eta y} \int_{-\infty}^{\infty} |\hat{f}(u+\ii\eta)| \dd u \leq \frac{C}{2\pi} e^{-\eta y} \int_{-\infty}^{\infty} e^{-\gamma(u^2-\eta^2)} \dd u \\
&\leq \frac{C}{2\pi} e^{-\eta y+\gamma \eta^2}\int_{-\infty}^{\infty} e^{-\gamma u^2} \dd u= \frac{C}{2\sqrt{\pi\gamma}}e^{- \eta y +\gamma \eta^2}.
\end{align*}
Since $ \eta $ can be chosen arbitrarily, we can optimize this last bound. Given that $ \gamma>0 $, the function $ \eta\mapsto -\eta y + \gamma \eta^{2}$ has a global minimum at $ \eta=\frac{y}{2 \gamma} $ that is equal to $ -\frac{y^{2}}{4\gamma}$. We substitute and obtain
\begin{equation*}
  %\label{eq:est light tailed complex shift}
  \frac{1}{2\pi} \left|\int_{-\infty+\ii\eta}^{\infty+\ii\eta}\hat{h}(u) e^{\ii u y} \dd u \right|\leq \frac{C}{2\sqrt{\pi\gamma}}e^{-\frac{y^2}{4\gamma}}, \quad y \in  \R.
\end{equation*}
\end{proof}

\begin{proof}[Proof of Corollary \ref{cor:gbm-decay-}]
 From the expression of the FT of the GBM, we have that for all $\alpha \in \R$, $\hat{f}_\alpha(u\mid x)=\hat{f}(u+\ii \alpha \mid x)=e^{\alpha\mu+\gamma \alpha^2}e^{-\gamma u^2} e^{-\ii u(\mu+2\gamma \alpha)}$. Consequently, we can apply Proposition \ref{prop:GBM tail bound} with $C=e^{\alpha\mu+\gamma \alpha^2}$, $r=2^m\pi$, $y=y_{k,\mu}$ to find that
        $$|I_r[\hat{f}_\alpha]\left(y_k\right)|
				\le
				e^{\alpha\mu+ \gamma\alpha^2}
				\frac{e^{-y_{k,\mu}^2/4\gamma}}{2\sqrt{\pi \gamma}}
				+\frac{1}{\pi} \left|\Im\left[ \hat{f}(2^m \pi + \ii \alpha)\right]\right||y_{k,\mu}|^{-1} + O(y_{k,\mu}^{-2}). $$
Since $D^\alpha_{m,k} = 2^{-m/2} I_r[\hat{f}_\alpha](y_k)$, equation \eqref{eq:gbm-coeff-bound} follows immediately.
\end{proof}

\section{Particular case of Theorem \ref{teo:estimate fat tails}}\label{sup:residueTheorem}

Here we include a particular case of Theorem \ref{teo:estimate fat tails} where the considered singularities are just poles.

\begin{theorem}[Similar to {\cite[Theorem 4.3]{schiff1999laplaceSM}}]
\label{theorem:TF meromorphic}
Let $h\in L^1(\R)$, real-valued and piecewise $C^1$ with FT $ \hat{h} $ that is meromorphic in $ \mathbb{C} $ with a finite number of poles $ u_{1}, \ldots ,  u_{N}$ of orders $\rho_1,\ldots,\rho_N $.
%\in\mathbb{N}_{>0}$. 
Suppose also that
\begin{itemize}
  \item[i)]  $\hat{h}$ has at most polynomial growth, say $|\hat{h}(u)|\leq C (1+ |u|^q)$.
 \item[ii)] We can find $ \theta \in  (0,\frac{\pi}{2})$  such that 
 $$\sup_{u \in  \gamma(R, \theta)}\left(|\hat{h}( u)|+ |\hat{h}( -u)|\right), \quad   \sup_{u \in  \gamma(-R, \pi-\theta)}\left(|\hat{h}( u)|+ |\hat{h}( -u)|\right)$$
 tend to zero as $ R\to  + \infty $.
  \item[iii)] There are $ a,b >0 $ and $ \gamma^+, \gamma^- >0$ such that
\begin{equation*}
  %\label{eq:assumption singularities position}
  %\begin{cases}
  \left\{
  \begin{aligned}
    &\Im(u_n)>0 \implies \ \Im(u_n)\geq b, \text{ and } \Im(u_n)=b \implies  \Re(\rho_n)\leq  \gamma^+, \\
    &\Im(u_n)< 0 \implies \ \Im(u_n)\leq  -a,\text{ and }  \Im(u_n)=-a \implies \Re(\rho_n)\leq  \gamma^- .\\
  %\end{cases}
  \end{aligned}
  \right.
\end{equation*}
\end{itemize}
%Then there exist $C>0 $ and $ R>0 $ such that
%\begin{equation}
%    \label{eq:estimate fat tails}
%    \begin{aligned}
%      |f(x)|&\leq  C x^{\gamma^+ -1}e^{-bx} ,\qquad x>  R, \\ |f(x)|&\leq  C |x|^{\gamma^- -1}e^{-a|x|}, \qquad  x <  -R.
%    \end{aligned}
%  \end{equation}
Then there exists $C_+,C_- >0 $ such that
\begin{equation}
  \label{eq:est fourier inversion meromorphic}
    |I_{\infty}[\hat{h}](y)|\leq
    \begin{cases}
      C_+ y^{\gamma^+ - 1}e^{-by} ,\qquad y>  1,\\
      C_-|y|^{\gamma^- -1}e^{-a|y|} ,\qquad y<  -1.
    \end{cases}
\end{equation}
\end{theorem}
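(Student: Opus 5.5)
We reduce to contour deformation plus the residue theorem, mirroring the proof of \cref{teo:estimate fat tails} but with the branch-cut contributions replaced by residues. Since $h\in L^1(\R)$ is piecewise $C^1$, $I_{\infty}[\hat{h}](y)$ is well defined by the Fourier inversion theorem for such functions. Take $y>1$; the case $y<-1$ is symmetric, closing the contour in the lower half-plane. For $R$ large, consider the closed trapezoidal contour $\gamma_R$ made of the real segment $[-R,R]$, the two sloped sides along $\gamma(R,\theta)$ and $\gamma(-R,\pi-\theta)$, and the horizontal top at height $\Im(u)=R$. This is the same contour as in \cref{fig:proof fat tail estimation 2}, but without the U-shaped indentations, since poles need no cuts. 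For $R$ large enough, all poles with $\Im(u_n)>0$ lie inside $\gamma_R$, and none lie on it because the poles are finite in number.

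By the residue theorem,
\[
\frac{1}{2\pi}\int_{-R}^{R}\hat{h}(u)e^{\ii uy}\dd u=\ii\sum_{\Im(u_n)>0}\Res_{u=u_n}\big(\hat{h}(u)e^{\ii uy}\big)-\frac{1}{2\pi}\int_{\gamma_R\setminus[-R,R]}\hat{h}(u)e^{\ii uy}\dd u .
\]
The non-real parts are handled exactly as in part b) of the proof of \cref{teo:estimate fat tails}. On the top, the polynomial growth in i) gives a bound $C e^{-yR}\,\mathrm{poly}(R)\to0$. On the sloped sides, $|e^{\ii uy}|\le1$ because $\Im u\ge0$. Moreover $|e^{\ii uy}|=e^{-yt\sin\theta}$ along the parametrization, so the integrals are bounded by $\sup_{\gamma(\pm R,\cdot)}|\hat{h}|\cdot\int_0^\infty e^{-yt\sin\theta}\dd t$. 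By ii) this tends to $0$. Letting $R\to\infty$ yields
\[
I_{\infty}[\hat{h}](y)=\ii\sum_{\Im(u_n)>0}\Res_{u=u_n}\big(\hat{h}(u)e^{\ii uy}\big).
\]

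Next I compute each residue. Near a pole $u_n$ of order $\rho_n\in\mathbb{N}$, write $\hat{h}(u)=g_n(u)(u-u_n)^{-\rho_n}$ with $g_n$ analytic near $u_n$. Then
\[
\Res_{u=u_n}\big(\hat{h}(u)e^{\ii uy}\big)=\frac{1}{(\rho_n-1)!}\frac{\mathrm{d}^{\rho_n-1}}{\mathrm{d}u^{\rho_n-1}}\big(g_n(u)e^{\ii uy}\big)\Big|_{u=u_n}=\frac{e^{\ii u_n y}}{(\rho_n-1)!}\sum_{j=0}^{\rho_n-1}\binom{\rho_n-1}{j}g_n^{(\rho_n-1-j)}(u_n)(\ii y)^j .
\]
For $y>1$ its modulus is at most $C_n\,y^{\rho_n-1}e^{-\Im(u_n)y}$.

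Finally, I compare the contributions using iii). Poles with $\Im(u_n)=b$ have $\rho_n\le\gamma^+$, so they are bounded by $C y^{\gamma^+-1}e^{-by}$. Poles with $\Im(u_n)>b$ give $y^{\rho_n-1}e^{-\Im(u_n)y}$. The ratio of such a term to $y^{\gamma^+-1}e^{-by}$ is $y^{\rho_n-\gamma^+}e^{-(\Im(u_n)-b)y}$, which is bounded on $y>1$. Because the number of poles is finite, summing gives the constant $C_+$.

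The main obstacle is the vanishing of the sloped-side integrals. This step needs both the uniform decay in ii) and the exponential damping from $e^{\ii uy}$ to apply dominated convergence, and it must also hold uniformly in $y>1$ so that the constant does not depend on $y$.
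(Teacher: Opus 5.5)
Your proposal is correct and follows essentially the same route as the paper's proof: the trapezoidal contour without indentations, vanishing of the non-real sides as in part $b)$ of the proof of \cref{teo:estimate fat tails}, the residue theorem, and a residue bound of order $y^{\rho_n-1}e^{-\Im(u_n)y}$ (you use the derivative formula for the residue where the paper multiplies Laurent and Taylor series, which is equivalent). Your explicit estimate $\sup|\hat{h}|/(y\sin\theta)$ on the sloped sides and your ratio argument for poles with $\Im(u_n)>b$ make precise two steps the paper only sketches; the one thing to add is that no pole lies on $[-R,R]$ because $\hat{h}$ is continuous on $\R$ when $h\in L^1(\R)$, not because there are finitely many poles.
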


\begin{proof}\label{app:theorem:TF meromorphic}
As in Theorem \ref{teo:estimate fat tails}, $I_\infty[\hat{h}](y)$ is well-defined. From now on we assume that $y>0$, as the proof for $y<0$ is analogous. Therefore, for the case $y>0$, we work only with the poles whose imaginary part is positive. To estimate $I_\infty[\hat{h}](y)$ we consider the path $\gamma_R$, shown in \cref{fig:rectangleResidueTheorem}. Since $ \hat{h} $ is meromorphic in the domain enclosed by $\gamma_R$, Cauchy's residue theorem yields
\begin{equation*}
\begin{aligned}
I_\infty[\hat{h}](y) = -\frac{1}{2\pi}\lim_{R\to\infty}\int_{\gamma_R\setminus [-R,R]}  \hat{h}(u)e^{\ii yu}\dd u+\ii\left( \sum_{\Im(u_n)>0} \Res_{u=u_n}\left(  e^{\ii uy}\hat{h}(u)\right)\right).
\end{aligned}
\end{equation*}
As in the proof of Theorem \ref{teo:estimate fat tails} (Estimation along part $b)$), the integral along $\gamma_R\setminus [-R,R]$ tends to zero as $ R\to  \infty $. 

% To estimate $I_\infty[\hat{h}](y)$
% Under this set of assumptions, we can  define 
% $$ \frac{1}{2\pi}\int_{-\infty}^{\infty} \hat{h}(u)e^{\ii yu} \dd u :=  \frac{1}{2\pi}\int_{-r}^{r} \hat{h}(u)e^{\ii yu} \dd u + \frac{1}{2\pi}\int_{|u|\geq r} \hat{h}(u)e^{\ii yu} \dd u,  $$
% for some $ r>0 $. In fact, by Lemma \ref{lem:fourier box value} we have that the integral in $ \{ |u|\geq r \} $ is given by Equation \eqref{eq:fourier box value}, and since $ h \in L^{1}(\R) $, we have that $ \hat{h} $ is integrable in $ [-r,r] $.

% In order to prove the statement given by Equation \eqref{eq:estimation truncated meromorphic}, we assume that $ y>0 $ and consider the boundary $ \gamma_R $ of an isosceles trapezoid (see Figure \ref{fig:rectangleResidueTheorem}) given by
% \begin{equation}
%   \label{eq:proof TF meromorphi contour path}
%   \begin{aligned}
%   \gamma_R:= [-R,R]& \cup \{ R+ e^{\ii \theta } u :\, 0\leq u \leq \csc(\theta) R\}\\
%   &\cup \{ u + \ii R :\, -R(1+\cot(\theta))\leq u\leq R(1+\cot(\theta))\} \\
%   &  \cup \{ -R - e^{-\ii \theta } u :\, 0\leq u \leq \csc(\theta) R\}
% \end{aligned}
% \end{equation}
%   with $0<\theta < \frac{\pi}{2}$. We also assume that  $ R>0 $ is sufficiently big so that all poles are included in the region delimited by $ \gamma_R $.
\begin{figure}[!ht]
\centering
\begin{tikzpicture}[
  scale=0.4,
  wall/.style={black, thick},
  flow/.style={
    black, thick,
    postaction={decorate},
    decoration={
      markings,
      mark=at position 0.5 with {\arrow{Latex[length=3mm]}}
    }
  }
]

% parámetros
\def\L{6}
\def\h{6}
\def\a{1.5}

% vértices
\coordinate (A) at (0,0);
\coordinate (B) at (\L,0);
\coordinate (C) at (\L+\a,\h);
\coordinate (D) at (-\a,\h);

% --- contorno con flechas de orientación ---
\draw[flow] (A)--(B);
\draw[flow] (B)--(C);
\draw[flow] (C)--(D);
\draw[flow] (D)--(A);

% etiquetas
\node[below=4pt] at (A) {$-R$};
\node[below=4pt] at (B) {$R$};

\node[above=4pt] at (D) {$-R-R\cot(\theta)+R\ii$};
\node[above=4pt] at (C) {$R+R\cot(\theta)+R\ii$};

% puntos
\fill (0.8,1.0) circle(2pt) node[above right] {$ $};
\fill (1.2,4.0) circle(2pt) node[above right] {$ $};
\fill (4.0,2.0) circle(2pt) node[above right] {$ $};

\node at (2.5,3.2) {$\boldsymbol{\ddots}$};

\end{tikzpicture}
% \begin{tikzpicture}[
%   scale=0.4,
%   flow/.style={
%     black, thick,
%     postaction={decorate},
%     decoration={
%       markings,
%       mark=at position 0.5 with {\arrow{Latex[length=3mm]}}
%     }
%   }
% ]

% % parámetros
% \def\L{6}

% % vértices del cuadrado
% \coordinate (A) at (0,0);
% \coordinate (B) at (\L,0);
% \coordinate (C) at (\L,\L);
% \coordinate (D) at (0,\L);

% % contorno con flechas de orientación
% \draw[flow] (A)--(B);
% \draw[flow] (B)--(C);
% \draw[flow] (C)--(D);
% \draw[flow] (D)--(A);

% % etiquetas
% \node[below=4pt] at (A) {$-R$};
% \node[below=4pt] at (B) {$R$};

% \node[above=4pt] at (D) {$-R+R\ii$};
% \node[above=4pt] at (C) {$R+R\ii$};

% % puntos interiores
% \fill (2.0,1.0) circle(2pt);
% \fill (1.2,4.0) circle(2pt);
% \fill (5.0,5.0) circle(2pt);

% \node at (3.5,2.5) {$\boldsymbol{\iddots}$};

% \end{tikzpicture}
\caption{Integration path $\gamma_R$ in the upper half-plane. The black points are the poles contained in $\{\Im(u)>0\}$.}
\label{fig:rectangleResidueTheorem}
\end{figure}

It remains to show that for $u_n$ with $\Im(u_n)>0$
$$  \left|\Res_{u=u_n}\left(  e^{\ii uy}\hat{h}(u)\right)\right|\leq  C |y|^{\gamma^+-1}e^{-by}, \qquad  y>1.$$
As $ \hat{h} $ is meromorphic, close to $ u_n$ with $ \Im(u_n)=b $, we have
$$  \hat{h}(u)=\sum_{k=0}^{\infty} a_k (u-u_n)^{k-\gamma^+}, \qquad  e^{\ii uy}= e^{\ii u_ny}\left( \sum_{k=0}^{\infty} \frac{(\ii y)^{k}}{k!}(u-u_n)^{k}\right).  $$
Consequently,
\begin{align*}
     \left|\Res_{u=u_n}(  e^{\ii uy}\hat{h}(u))\right| &= |e^{\ii u_ny}| \left| \sum_{k=0}^{\gamma^+-1} a_k \frac{(\ii y)^{\gamma^+-1-k}}{(\gamma^+-1-k)!} \right|\\
     & \leq  e^{-by}  \sum_{k=0}^{\gamma^+-1} |a_k| \frac{y^{\gamma^+-1-k}}{(\gamma^+-1-k)!}\leq  C y^{\gamma^+-1} e^{-by}.
\end{align*}
%Similarly, if the poles at $ \Im(u)=-a $ have order not bigger than $ \gamma^-\geq 1 $ then
%$$  \left|\Res_{u^-_n}(  e^{\ii uy}\hat{h}(u))\right|\leq  C |y|^{\gamma^{-}-1}e^{-a|y|}, \qquad  y<-1. $$

All in all, if the poles of $\hat{h}$ lie in $\Im(u)\geq b$ or $\Im(u)\leq -a$, then the estimation in \eqref{eq:est fourier inversion meromorphic} follows.
\end{proof}

% \section{Fourier coefficients decay for GBM dynamics}

% For GBM, $\hat{f}(w) = e^{-\ii w (A B + D) - C B w^2 }$ where $A = r-q-\frac12\sigma^2$, $B=T-t>0$, $C=\frac12\sigma^2>0$, $D = \log\left(\frac{S_0}{K}\right)$.
% The density coefficients are given by
% \begin{align*}
%     D_{m,k}^{\alpha} &= \frac{1}{2\pi} \int_{-2^m\pi}^{2^m\pi} \hat{f}(w+\alpha \ii) \frac{e^{\ii k \frac{w}{2^m}}}{2^{\frac{m}{2}}} \dd w \\
%     %&= \frac{1}{2\pi} 2^{-\frac{m}{2}}  \int_{-2^m\pi}^{2^m\pi} \exp\left(-\ii (w+\alpha \ii) (A B + D) - C B (w+\alpha \ii)^2 \right) e^{\ii k \frac{w}{2^m}} dw\\
%     %&= \frac{2^{-\frac{m}{2}}}{2\pi} e^{(AB+D)\alpha + CB \alpha^2} \int_{-2^m\pi}^{2^m\pi} \exp(-CB w^2) \exp(-\ii (AB+D+2CB\alpha) w) e^{\ii k \frac{w}{2^m}} dw \\
%     &= \frac{2^{-\frac{m}{2}}}{2\pi} e^{(AB+D)\alpha + CB \alpha^2} \int_{-2^m\pi}^{2^m\pi} e^{-CBw^2}e^{ \ii w \left( \frac{k}{2^m}-(AB+D+2CB\alpha)\right)} \dd w.
% \end{align*}
% At this stage, $ |D_{m,k}^{\alpha}| $ can be estimated by \eqref{eq:est coeff light tailed} applied with $\gamma=CB $, $ r=2^{m}\pi $ and
% $y =\beta:= \frac{k}{2^m}-(AB+D+2CB\alpha)$. Namely, we obtain that
% \begin{equation*}
%   \label{eq:coeff est density GBM}
%    |D_{m,k}^{\alpha}|\leq 2^{-\frac{m}{2}} e^{(AB+D)\alpha + CB \alpha^2}\left(e^{-\frac{\beta^2}{4CB}} \frac{1}{2\sqrt{\pi CB}} + \frac{2}{\pi}e^{-CB  4^m \pi^2}|\beta|^{-1}\right).
% \end{equation*}

\section{Generalized Hyperbolic and Normal Inverse Gaussian}\label{sup:GH_NIG}

In the following remarks, we will apply Corollary \ref{cor: estimate fat tails truncated} to the Generalized Hyperbolic (GH) and the Normal Inverse Gaussian (NIG) distributions.

\begin{remark}[GH]
The FT of the GH distribution (centered at $0$) is
$$ \hat{f}(u)= \frac{\gamma ^{\lambda }}{\left( a^{2}-(b -\ii u)^{2}\right)^{\frac{\lambda}{2}}}\frac {K_{\lambda }(\delta {\sqrt {a^{2}-(b -\ii u)^{2}}})}{K_{\lambda }(\delta \gamma )},$$
where $ K_\lambda $ is the modified Bessel function of the second kind of order $ \lambda $, and $\gamma = \sqrt{a^2-b^2}$. 
This function can be defined on $ \C \setminus\{\gamma_{u_1}, \gamma_{u_2}\} $ where $ u_1= \ii (a-b)$, $u_2 =-\ii (a+b)$, and the square root $\sqrt{(u-u_1)(u-u_2)}$ coincides with the usual root in case its argument is real.
Since $ a ^{2}-(b -\ii u)^{2}=(u-u_1)(u-u_2)  $ we have
$$ \hat{f}(u)= \gamma ^{\lambda} \left( (u-u_1)(u-u_2) \right)^{-\frac{\lambda}{2}}\frac {K_{\lambda }(\delta {\sqrt {(u-u_1)(u-u_2) }})}{K_{\lambda }(\delta \gamma )}.$$
To study the behaviour of the Bessel function at infinity, we use
$ K_\lambda (u)\sim \sqrt{\frac{\pi}{2u}}e^{-u}$, $|\arg(u)|<\frac{3}{2}\pi,$
which means that $ K_\lambda(u) $ decays exponentially in case $ \Re(u)>0 $, i.e. $ |\arg(u)|<\frac{\pi}{2}$. We can extend the usual real-valued $ \sqrt{u} $ in the complex plane so that $|\arg(\sqrt{u})| < \frac{\pi}{2} $ everywhere except for a branch cut where $ |\arg(u)|=\frac{\pi}{2}.$  Accordingly, $ |\hat{f}| $ decays exponentially along every direction, with the exception of the branch cuts where we have that
$$ |\hat{f}(u)|\leq \gamma ^{\lambda} \left| (u-u_1)(u-u_2) \right|^{-\frac{\lambda}{2}}\hspace{-5 pt}\sqrt{\frac{\pi}{2\sqrt{(u-u_1)(u-u_2)}}}\leq  C \left| (u-u_1)(u-u_2) \right|^{-\frac{\lambda}{2}-\frac{1}{4}}.$$
Next, by the definition of the modified Bessel function, when $\lambda \notin \Z$
$$ K_\lambda (u)= u^{-\lambda} F_{-\lambda}(u)- u^{\lambda}F_{\lambda}(u), $$
where both $ F_{-\lambda} $ and $ F_{\lambda} $ are entire functions of the form
$$ F_{\lambda}(u)= \sum_{m=0}^{\infty} a_m u^{2m}.$$
In particular, we see that $  F_{-\lambda}(\delta \sqrt{(u-u_1)(u-u_2)}) $ and $ F_{\lambda}(\delta\sqrt{(u-u_1)(u-u_2)}) $ are entire functions. In light of this, we have
$$ \hat{f}(u)= \left((u-u_1)(u-u_2)\right)^{-\lambda} F_{-\lambda}(\delta\sqrt{(u-u_1)(u-u_2)}) -F_{\lambda}(\delta\sqrt{(u-u_1)(u-u_2)}),$$
i.e., $ \hat{f} $ can be written as the sum of an entire function and a function with algebraic singularities of order $\lambda$. Consequently, we can apply Corollary \ref{cor: estimate fat tails truncated}, and by \eqref{eq:density_bound} we find $ C_+, C_->0 $ such that (for $y_k:=k/2^m$ and $r=2^m\pi$) %{\color{red}{JG: say sth like for $x=k/2^m$ and $r=2^m\pi$, $\Im\left[e^{-\ii xr} \hat{f}(-r)\right]=\Im[\hat{f}(-r)]$}, and consider also the missing absolute value of the imaginary part. Apply this corrections also below}
 \begin{equation*}
    |I_r[\hat{f}](y_k)|\leq
    \begin{cases}
      C_+ y_k^{\lambda-1}e^{-(a-b)y_k}  + \frac{1}{\pi y_k}\left|\Im[\hat{f}(r)]\right| + O(y_k^{-2}) ,\quad y_k>  1,\\
      C_- |y_k|^{\lambda-1}e^{-(b+a)|y_k|}  +  \frac{1}{\pi |y_k|}\left|\Im[\hat{f}(r)]\right| + O(y_k^{-2}),\, y_k<  -1.
    \end{cases}
\end{equation*}
This estimation also holds when $\lambda \in \mathbb{Z}$. In case $\lambda =0$, $K_\lambda(u)$ has a logarithmic branch point of order $0$ at the origin, and Corollary \ref{cor: estimate fat tails truncated} applies with maximum order equal to zero. In case $\lambda\in \Z \setminus \{0\}$, then $K_\lambda(u) =u^{-|\lambda|}g_0(u) + u^{|\lambda|} \log(u)  g_1(u) + H(u) $, where $g_0,g_1$ and $H$ are entire functions. Accordingly, when $\lambda >0$, $\hat{f}$ has a logarithmic contribution of order $0$, and an algebraic one of order $\lambda$, which dominates the one of order $0$. In case $\lambda <0$, $\hat{f}$ only has a logarithmic contribution of order $\lambda$. In any case, we can apply  Corollary \ref{cor: estimate fat tails truncated} with maximum order equal to $\lambda$ and get the estimation above.
\end{remark}

\begin{remark}[NIG]
 This is a special case of the GH distribution with $\lambda=-\frac12$. %Consequently, we can find $C>0$ such that
 %\begin{equation*}
 %   \left|\frac{1}{2\pi}\int_{-r}^{r} \hat{f}(u)e^{\ii ux}du\right|\leq
 %   \begin{cases}
 %     C \left(x^{-\frac32}e^{-(a-b)x}  + x^{-1} e^{-r \delta \cos(\pi/8) }\right),\qquad x>  1,\\
  %    C \left(|x|^{-\frac32}e^{-(b+a)|x|}  + |x|^{-1} e^{-r \delta \cos(\pi/8) }\right),\qquad x<  -1.
  %  \end{cases}
%\end{equation*}
\end{remark}

\section{Bounds: numerical example}\label{sup:boundsNumExam}

In this section, we aim to illustrate with a numerical example the estimates provided by section \ref{sec:Asymptotic decay of the payoff and density coefficients with respect to the translation parameters} regarding the decay of the density and payoff coefficients. The example concerns the valuation of a cash-or-nothing (CON) call option under the NIG model. The NIG (see \cite{bayer2024quasiSM}) and option parameters are $\alpha_{\rm{NIG}}=5$, $\beta_{\rm{NIG}}=3$, $\Delta_{\rm{NIG}}=1$, $S_0=K=100$, $r_0=0.1$, $T=1$. The parameters of the damped SWIFT method are $m=2$ and $\alpha = 1.29$. For these values of $m$ and $\alpha$, $\varepsilon_V = 0.0251$ and $\varepsilon_D = 6.72 \cdot 10^{-6}$.

We begin with the study of the decay of the Fourier coefficients of the density, illustrated in Figure \ref{fig:density_bounds_decay}. In the right tail, since $\alpha$ is positive (we are pricing a call CON), the density has been undamped, so the decay is initially slower than in the left tail, where damping is applied to the density. As predicted in section \ref{sec:Asymptotic decay of the payoff and density coefficients with respect to the translation parameters}, two decay regimes are observed in each tail: first an exponential regime and eventually a linear one. The transition between the two regimes is governed by the value of $\varepsilon_D$ relative to the exponentially decaying term. In the plot, the coefficients with exponential decay are marked with stars, whereas those exhibiting linear decay are drawn with a continuous line. 
%In the right tail, the region with exponential decay approximately covers the coefficients $k=0,\ldots,64$. In this region, the term with linear decay in \eqref{eq:est truncated fourier inversion} is negligible compared with the exponentially decaying term. Beyond this point, the situation reverses: the term in \eqref{eq:est truncated fourier inversion} with exponential decay becomes negligible compared with the linearly decaying term, and therefore the decay of the coefficients becomes linear. 
%In both cases, 
It can be observed that the bounds provided in section \ref{sec:Asymptotic decay of the payoff and density coefficients with respect to the translation parameters} closely match the actual decay of the coefficient modulus, except very close to the mean, as expected due to the asymptotic nature of the theoretical results. In the left tail, the situation is similar. The only difference is that, due to the damping of the density, the region with linear decay begins earlier because the exponentially decaying terms are smaller than in the right tail.

Finally, in Figure \ref{fig:payoff_bounds_decay} we present the actual decay of the payoff coefficients together with the bounds. Since the payoff of a call CON is zero to the left of zero, in the left tail, the decay of the payoff coefficients is purely linear. In the right tail, there is an initial region of exponential decay.
%, approximately for $k=0,\ldots,20$. 
This region ends earlier than in the case of the density because $\varepsilon_V \gg \varepsilon_D$. Beyond this point, the decay becomes linear again. In both decay regimes, the bound is very tight.

\begin{figure}[!htbp]
 \centering
 \includegraphics[scale=0.3]{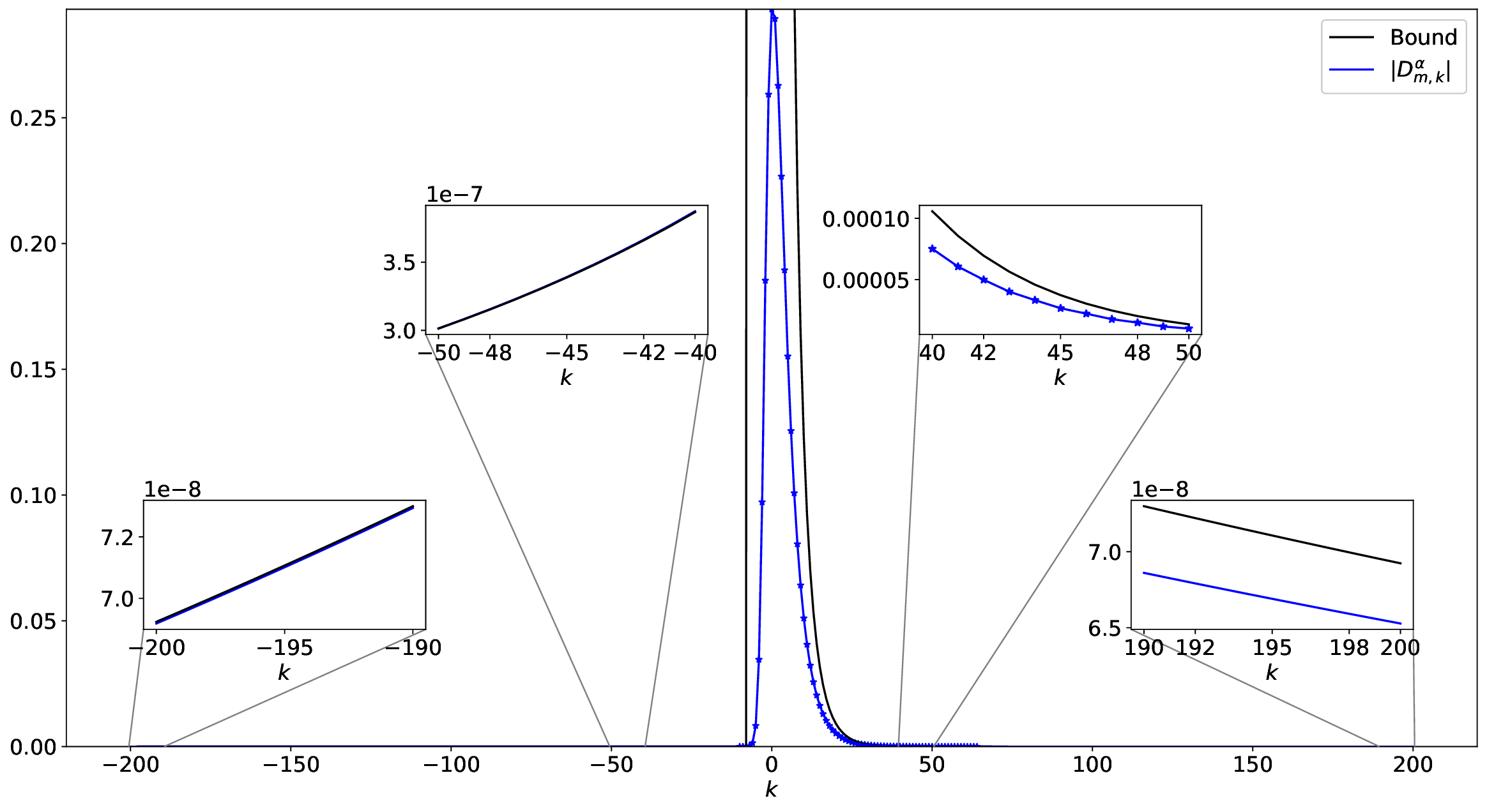}
 \caption{Decay of the absolute value of the Fourier coefficients of the density.}
 \label{fig:density_bounds_decay}
\end{figure}

\begin{figure}[!htbp]
 \centering
 \includegraphics[scale=0.3]{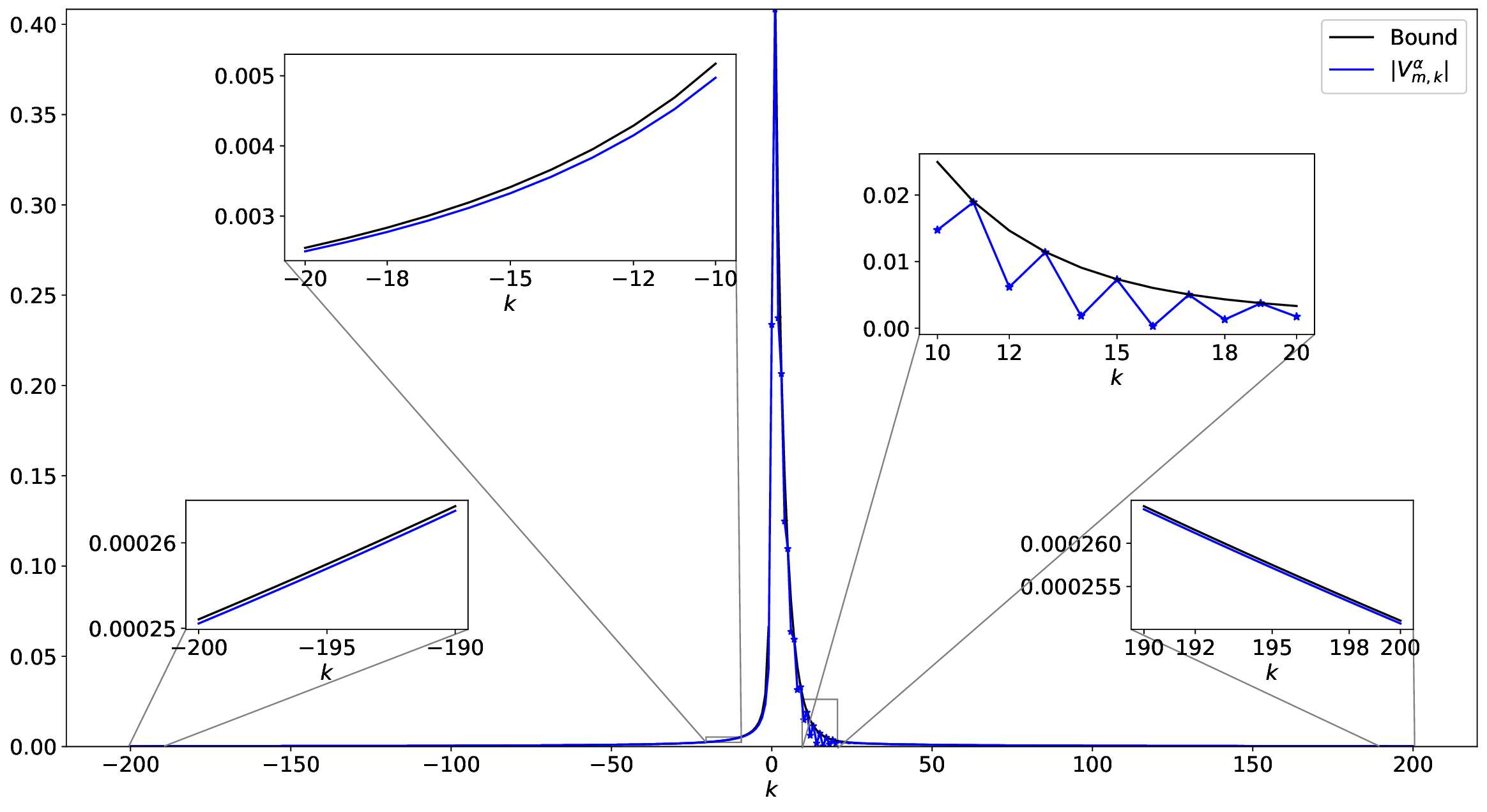}
 \caption{Decay of the absolute value of the Fourier coefficients of the payoff.}
 \label{fig:payoff_bounds_decay}
\end{figure}

\end{document}